\numberwithin{equation}{section}
\theoremstyle{plain}
\newtheorem{thm}{Theorem}[section]
\newtheorem{theorem}[thm]{Theorem}
\newtheorem{lemma}[thm]{Lemma}
\newtheorem{lem}[thm]{Lemma}
\newtheorem{corollary}[thm]{Corollary}
\newtheorem{cor}[thm]{Corollary}
\newtheorem{proposition}[thm]{Proposition}
\newtheorem{pro}[thm]{Proposition}
\theoremstyle{definition}
\newtheorem{remark}[thm]{Remark}
\newtheorem{definition}[thm]{Definition}
\newtheorem{claim}[thm]{Claim}
\newtheorem{s:examples}[thm]{s:examples}
\newtheorem{conjecture}[thm]{Conjecture}
\numberwithin{equation}{section}
\newcommand{\Gal}{{\rm Gal}}
\newcommand{\ess}{{\rm ess}}
\newcommand{\Pic}{{\rm Pic}}
\newcommand{\OO}{{\mathcal O}}
\newcommand{\sO}{{\mathcal O}}
\newcommand{\C}{{\mathbb C}}
\renewcommand{\P}{{\mathbb P}}
\newcommand{\Q}{{\mathbb Q}}
\newcommand{\R}{{\mathbb R}}
\newcommand{\Z}{{\mathbb Z}}
\newcommand{\Aut}{{\rm Aut\hspace{.1ex}}}
\newcommand{\GL}{{\rm GL\hspace{.1ex}}}
\newcommand{\Ker}{{\rm Ker\hspace{.1ex}}}
\newcommand{\alb}{{\rm alb}}
\newcommand{\Alb}{{\rm Alb}}
\newcommand{\Bir}{{\rm Bir}}
\newcommand{\NS}{{\rm NS}}
\newcommand{\ratmap}{\dashrightarrow}
\newcommand{\ssec}{\subsection}
\newcommand{\ol}{\overline}
\newcommand{\vast}{\bBigg@{4}}
\newcommand{\Vast}{\bBigg@{5}}
\newcommand{\gk}{\kappa}
\newcommand{\End}{\mathrm{End}}
\newcommand{\fii}{\mathrm{f.i.}}
\newcommand{\Id}{\mathrm{Id}}
\newcommand{\Ima}{\mathrm{Im}}
\newcommand{\PGL}{\mathrm{PGL}}
\newcommand{\topp}{\mathrm{top}}
\newcommand{\torsion}{\mathrm{torsion}}
\newcommand{\vir}{\mathrm{vir}}
\newcommand{\cnec}{\mathrel{:=}}
\newcommand{\tle}{\trianglelefteq}
\renewcommand{\(}{\left(}
\renewcommand{\)}{\right)}
\newcommand{\cto}{\circlearrowleft}
\newcommand{\dto}{\dashrightarrow}
\title[Virtual invariants of zero entropy groups]
{On the virtual invariants of zero entropy groups of compact K\"ahler manifolds
}
\author{Tien-Cuong Dinh}
\address{Department of Mathematics, National University
	of Singapore, 
	Singapore. }
\email{matdtc@nus.edu.sg}
\author{Hsueh-Yung Lin}
\address{Department of Mathematics, National Taiwan University
	and National Center for Theoretical Sciences,
	Taipei 10617, Taiwan.}
\email{hsuehyunglin@ntu.edu.tw}
\author{Keiji Oguiso}
\address{
		Graduate School of Mathematical Sciences, the University of Tokyo, 
		Japan and National Center for Theoretical Sciences, 
		National Taiwan University,
		Taipei, Taiwan.
}
\email{oguiso@g.ecc.u-tokyo.ac.jp}
\author{De-Qi Zhang}
\address{Department of Mathematics, National University
	of Singapore, 
	Singapore.}
\email{matzdq@nus.edu.sg}
\date{}
\begin{document}

\begin{abstract}
Let $X$ be a compact K\"ahler manifold. 
We study subgroups
$G \le \Aut(X)$ of  
biholomorphic automorphisms of zero entropy
when $\Aut^0(X)$ is compact (e.g. when $\Aut^0(X)$ is trivial).
We show that the virtual derived length $\ell_\vir(G)$ of $G$ satisfies 
$\ell_\vir(G) \le \dim X -\gk(X)$,
where $\gk(X)$ is the Kodaira dimension of $X$.
Modulo the main conjecture of our previous work concerning the essential nilpotency class, 
we obtain the same upper bound $c_\vir(G) \le \dim X -\gk(X)$ 
for the virtual nilpotency class $c_\vir(G)$,
together with a geometric description of $G \cto X$ when the equality holds.
\end{abstract}


\subjclass[2010]{
14J50, 
32M05, 
32H50, 
37B40. 
}

\keywords{Automorphism of compact K\"ahler manifolds,
zero entropy, group action,
	derived length, nilpotency class,
	principal torus bundle}

\maketitle

\tableofcontents

\section{Introduction } \label{s:intro}

Let $X$ be a compact K\"ahler manifold and let $f : X \to X$ be a holomorphic automorphism.
The well-known Gromov--Yomdin theorem asserts that the topological entropy $h_{\topp}(f)$ of $f$
is equal to the logarithm of the spectral radius of $f^*$ acting on the cohomology ring of $X$~\cite{Gromov,Yomdin}.
We always have $h_{\topp}(f) \ge 0$ and 
when $h_{\topp}(f) = 0$, we say that $f$ has \emph{zero entropy}.
In terms of the original definition of topological entropy,
the dynamics of 
zero entropy automorphisms are among the least chaotic ones.
Recently such automorphisms 
have been investigated in several works (see e.g.~\cite{CantatICM,CO,DLOZI,FFO,LOZ})
in the context of complex dynamics and algebraic geometry.
Progress has been made but many questions are still unanswered.

Let $G$ be a subgroup of $\Aut(X)$, the group of holomorphic automorphisms of $X$.
We say that $G$ is a \emph{zero entropy subgroup} if every element in $G$ has zero entropy.
The aim of this paper is to further the study of such groups, 
which we started in~\cite{DLOZI}.

\ssec{Essential invariants of zero entropy subgroups}
\hfill

Let $G \le \Aut(X)$ be a zero entropy subgroup.
First we recall the definition of essential derived length $\ell_\ess(G,X)$
and essential nilpotency class $c_\ess(G,X)$
that we have studied in~\cite{DLOZI}.

For any group $H$, the derived series of $H$ is defined inductively as 
$$H^{(0)} = H,  \ H^{(i+1)} = [H^{(i)},H^{(i)}]$$
where $[H^{(i)}, H^{(i)}]$ is the commutator subgroup of $H^{(i)}$
generated by $[g,h] = ghg^{-1}h^{-1}$ ($g,h \in H^{(i)}$).
The {\it derived length} $\ell(H)$ of $H$ is the minimal integer $i$ such that $H^{(i)} = 1$.
Likewise, the lower central series of $H$ is defined inductively as 
$$\Gamma_0H = H,  \  \Gamma_{i+1}H = [H,\Gamma_{i}H] = [\Gamma_{i}H,H],$$
and the {\it nilpotency class} $c(H)$ of $H$ is the minimal integer $i$ 
such that $\Gamma_iH = 1$.
If the decreasing series defining $\ell(H)$ or $c(H)$ does not terminate to $1$,
we set $\ell(H) = \infty$ or $c(H) = \infty$ accordingly.

We denote by $\Aut^0(X)$ the identity component of $\Aut(X)$. 
For every subgroup $H \le \Aut(X)$,
let 
$$H_0 \cnec H \cap \Aut^0(X).$$
Given a zero entropy subgroup $G \le \Aut(X)$,
there exists a finite index subgroup $G' \le G$ such that
$G'/G'_0$ acts faithfully on $H^{p,p}(X,\C)$ for every $1 \le p \le \dim X - 1$,
with image being a unipotent subgroup of $\GL(H^{p,p}(X,\C))$,
and the derived length $\ell(G'/G'_0)$ and the nilpotency class $c(G'/G'_0)$
are both independent of the choice of $G' \le G$~\cite[Proposition 2.6]{DLOZI}.
We then define the \emph{essential derived length}   and the
\emph{essential nilpotency class} of $G$ as
$$\ell_\ess(G,X) \cnec \ell(G'/G'_0),$$ 
$$c_\ess(G,X) \cnec c(G'/G'_0).$$

In~\cite{DLOZI},
we proved the following upper bound of $\ell_{\ess}(G, X)$.

\begin{theorem}[{\cite[Theorem 1.2]{DLOZI}}] \label{thm-DLOZ}
	Let $X$ be a compact K\"ahler manifold of dimension $\ge 1$. 
	For every zero entropy subgroup $G \le \Aut(X)$, its essential derived length $\ell_{\ess}(G, X)$ satisfies 
	$$\ell_{\ess}(G, X) \le \dim X - 1.$$
\end{theorem}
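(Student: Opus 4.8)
The plan is to deduce this from a relative, inductive version of the statement. Writing $N\cnec\dim X$, by \cite[Proposition 2.6]{DLOZI} it suffices to bound the derived length $\ell(U)\le N-1$ of the unipotent group $U\cnec G'/G'_0\le\GL\big(\bigoplus_{p=1}^{N-1}H^{p,p}(X,\C)\big)$ attached to a suitable finite index subgroup $G'\le G$. We argue by induction on $N$, the case $N=1$ being vacuous. A unipotent linear group in characteristic zero is torsion free, so we may assume $U\neq 1$; being unipotent, $U$ has nonzero fixed vectors in every invariant subspace, a fact used repeatedly below.

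The next step, which is the crux, is to produce a $G$-equivariant fibration. Since every $g\in G$ has zero entropy, Gromov--Yomdin gives that $g^*$ has spectral radius $1$ on $H^{1,1}(X,\R)$; combining this with the invariance of the salient cone $\Nef(X)$ one obtains, by an argument standard for zero entropy subgroups (see \cite{LOZ}), a nonzero class $\theta\in\Nef(X)$ with $g^*\theta=\theta$ for all $g\in G$. Let $k\cnec\nu(\theta)$ be its numerical dimension. If $k=N$, i.e. $\theta$ is big and nef, then one shows that $G/G_0$ is finite, so $\ell(U)=0$: Nakamaye's theorem exhibits the null locus of $\theta$ as a proper $G$-invariant analytic set off which $\theta$ is K\"ahler, and the Hodge--Riemann relations attached to powers of $\theta$ force $U$ to preserve a definite form and hence to be trivial. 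If $1\le k\le N-1$, then --- and here one must bring in genuine geometry --- using $\theta$, the canonical fibrations on $X$ (the Iitaka fibration when $\gk(X)\ge 1$, the Albanese map when the irregularity is positive, the maximal rationally connected fibration otherwise), and the structure theory of invariant fibrations for zero entropy groups (\cite{LOZ} and references therein), one produces a surjective morphism with connected fibres $\pi\colon X\to Y$ onto a compact K\"ahler manifold, equivariant for a finite index subgroup of $G$, with $0<\dim Y<N$. A possibly meromorphic fibration is made holomorphic by an equivariant resolution of indeterminacy, after which the problem is transported to the modification; $Y$ too is resolved equivariantly.

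Given such a $\pi$, set $U_Y\cnec\im(U\to\Aut(Y))$ and let $U_F\le U$ be the kernel, the group of automorphisms of $X$ over $Y$. Both act unipotently on cohomology, $U_Y$ has zero entropy on $Y$, and $\ell(U)\le\ell(U_F)+\ell(U_Y)$. By the inductive hypothesis $\ell(U_Y)\le\dim Y-1$. For $U_F$ one proves the relative form of the theorem: via the Leray filtration of $H^\bullet(X)$ for $\pi$, the group $U_F$ is controlled by its essential action on a general fibre $F$ --- of derived length $\le\dim F-1$ by induction --- extended by the ``fibrewise connected'' automorphisms coming from $\Aut^0$ of the fibres, whose contribution to the derived length is at most $1$; hence $\ell(U_F)\le\dim F=\dim X-\dim Y$. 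Adding up, $\ell(U)\le(\dim X-\dim Y)+(\dim Y-1)=N-1$.

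The principal obstacle is the middle step: converting the purely numerical datum of a $G$-invariant non-big nef class into an honest $G$-equivariant fibration of positive relative dimension. This is where Nakamaye's theorem, the equivariance of canonical fibrations, and the zero entropy structure results all have to be marshalled, and where the geometric hypotheses do the work. Once the fibration is at hand the remaining delicate point is the bookkeeping in the last paragraph, in particular the claim that the fibrewise $\Aut^0$ correction costs only a single unit of derived length; it is precisely at the analogous point for the nilpotency class that one cannot currently do as well, which is why the bound for $c_\vir(G)$ in the abstract is only conditional.
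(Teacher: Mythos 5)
First, note that the statement you were asked to prove is not proved in this paper at all: it is quoted verbatim from \cite{DLOZI} (Theorem 1.2 there), and the argument given in that reference is purely cohomological, built on ``dynamical filtrations'' of nef classes (iterated quasi-nef sequences associated to the unipotent action on $H^{p,p}(X)$), precisely so as to avoid ever having to produce a fibration on $X$. Your proposal takes a genuinely different, geometric route, and its central step is a genuine gap. From a $G$-invariant nef class $\theta$ with $1\le\nu(\theta)\le N-1$ you claim to ``produce a surjective morphism with connected fibres $\pi\colon X\to Y$'' equivariant for a finite index subgroup, with $0<\dim Y<N$. No known result delivers this. Numerical dimension of a nef class does not give a fibration (that is an abundance/semiampleness-type statement, open already for a single zero entropy automorphism of a Calabi--Yau threefold); the nef reduction of Bauer--Tsuji and others produces only an almost holomorphic map, computes the nef dimension rather than $\nu(\theta)$, is established in the projective rather than the general K\"ahler setting, and its equivariance for the whole group is an additional issue. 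The canonical fibrations you invoke (Iitaka, Albanese, MRC) may all be trivial (e.g.\ $\gk(X)=0$ with the Iitaka map constant, $q(X)=0$, $X$ non-uniruled), exactly in the range of examples that make the theorem hard; and \cite{LOZ} contains no structure theorem producing invariant fibrations for zero entropy groups. In short, the existence of such an equivariant fibration is itself a conjectural statement stronger than what is needed, which is why \cite{DLOZI} works with filtrations of cohomology instead.

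Even granting the fibration, the final bookkeeping is not justified. The subadditivity $\ell(U)\le\ell(U_F)+\ell(U_Y)$ for the extension is fine, and $\ell(U_Y)\le\dim Y-1$ follows by induction, but the claim $\ell(U_F)\le\dim F$ rests on the assertion that the ``fibrewise $\Aut^0$'' part contributes at most one unit of derived length. The kernel of $U_F\to$ (action on the cohomology of a general fibre) consists of classes of automorphisms of $X$ over $Y$ that need not lie in $\Aut^0(X)$, and their image in $U\le\GL\big(\bigoplus_p H^{p,p}(X)\big)$ (say via the Leray filtration) has no a priori bound on its derived length by $1$; relative automorphism groups over a positive-dimensional base are not abelian-by-small in general. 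Gaining the $-1$ in the bound $\dim X-1$ is exactly the delicate point; a naive fibre-plus-base count of the sort you propose tends to give only $\dim X$. So both the crux step and the concluding estimate need arguments that are not currently available, and the proposal does not constitute a proof of the theorem.
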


As for the nilpotency class, 
we conjectured in~\cite{DLOZI} that the same upper bound in Theorem~\ref{thm-DLOZ} 
holds for $c_{\ess}(G, X)$,
which would improve Theorem~\ref{thm-DLOZ} to an optimal statement:

\begin{conjecture}[cf. {\cite[Conjecture 1.4]{DLOZI}}] \label{conj-DLOZ}
	Let $X$ be a compact K\"ahler manifold of dimension $ \ge 1$.
	For every zero entropy subgroup $G \le \Aut(X)$, we have
	$$c_\ess(G,X) \le \dim X -1.$$
\end{conjecture}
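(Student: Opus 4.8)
The plan is to mirror, for the nilpotency class, the strategy that proves Theorem~\ref{thm-DLOZ} for the derived length: first linearize, then induct on $\dim X$ through an invariant fibration. \emph{Step 1 (linearization).} By \cite[Proposition 2.6]{DLOZI}, after replacing $G$ by a finite-index subgroup $G'$ we may assume that $\ol G\cnec G'/G'_0$ embeds into $\GL(H^{1,1}(X,\C))$ with unipotent image and that $c_\ess(G,X)=c(\ol G)$; so it suffices to bound $c(\ol G)$. Let $\mathcal N\subseteq\End(H^{1,1}(X,\C))$ be the non-unital associative algebra generated by $\{\ol g-\id:\ol g\in\ol G\}$. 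The standard estimate $\Gamma_j\ol G\subseteq\id+\mathcal N^{\,j+1}$ shows that $\mathcal N^{\,\dim X}=0$ would give $c(\ol G)\le\dim X-1$. Equivalently, it suffices to produce a $\ol G$-invariant flag $0=W_0\subsetneq W_1\subsetneq\cdots\subsetneq W_m=H^{1,1}(X,\C)$ with $m\le\dim X$ and trivial graded pieces, i.e. $(\ol g-\id)W_i\subseteq W_{i-1}$ for all $i$ and all $\ol g\in\ol G$. Since $h^{1,1}$ may be far larger than $\dim X$, Lie--Kolchin alone is useless here and this is a genuine constraint coming from the zero-entropy hypothesis.

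\emph{Step 2 (induction via invariant fibrations).} The base case $\dim X=1$ is trivial, as then $\ol G$ is already trivial. In general, possibly after a $G$-equivariant modification as permitted by the analysis of \cite{DLOZI}, one wants to extract a $\ol G$-equivariant fibration $\pi\colon X\to Y$ with $0<\dim Y<\dim X$ from the zero-entropy structure theory of \cite{DLOZI} combined with the constructions of invariant fibrations in the spirit of the Tits-alternative results of Dinh--Sibony and Zhang (using, according to the case, the MRC, Albanese or Iitaka fibration, or a common $\ol G$-invariant nef non-big class). One then assembles the flag on $H^{1,1}(X,\C)$ from two pieces: a ``horizontal'' part, the image of $\pi^*H^{1,1}(Y,\C)$, which carries a $\ol G$-invariant flag of length $\le\dim Y$ with trivial graded pieces by the inductive hypothesis applied to $Y$; and a complementary ``vertical'' part built from the relative cohomology of $\pi$, which — via relative Hard Lefschetz and the inductive analysis of a general fibre $F$ together with the monodromy of $\pi$ — should carry a $\ol G$-invariant flag of length $\le\dim X-\dim Y$. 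Splicing the two would give a flag of length $\le\dim Y+(\dim X-\dim Y)=\dim X$, as needed.

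\emph{Main obstacle.} The crux — and the reason this remains a conjecture — is that the splice in Step 2 must produce a \emph{single} flag with \emph{trivial} graded pieces. The corresponding step in Theorem~\ref{thm-DLOZ} only used the elementary inequality $\ell(\ol G)\le\ell(N)+\ell(Q)$ for an extension $1\to N\to\ol G\to Q\to1$, for which there is no analogue for the nilpotency class: the class of an extension is not controlled by the classes of the kernel and quotient. One therefore has to understand precisely how $\ol g-\id$ carries the horizontal part into the vertical part and how the two partial flags interleave into one chain — in effect, one needs the flags on the base and on the general fibre to be the restrictions of one common $\ol G$-invariant flag on $H^{1,1}(X,\C)$, a compatibility that is not formal. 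Concretely this seems to demand a group-equivariant refinement of the Dinh--Sibony description of zero-entropy dynamics: for a single $f$ one has a Jordan-type chain of nef classes $\theta_1,\dots,\theta_r$ with $f^*\theta_i\equiv\theta_i$ modulo $\langle\theta_1,\dots,\theta_{i-1}\rangle$ (plus lower-degree cohomology) and with $\theta_1\cdots\theta_j\ne0$ in $H^{j,j}(X,\C)$, which forces $r\le\dim X-1$ because such a product cannot survive in top degree; what is missing is a version of this chain valid simultaneously for the whole group $\ol G$, which would directly supply the flag of Step 1 and close the argument.
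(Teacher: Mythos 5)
The statement you are asked to prove is Conjecture~\ref{conj-DLOZ}: the paper itself offers no proof of it, and in fact the whole point of Theorem~\ref{thm-kge0} and Corollary~\ref{cor-kge0w} is to work \emph{modulo} this conjecture. The only cases established are low-tech special ones: surfaces and hyperk\"ahler manifolds (where $G/G_0$ is virtually abelian by Fujiki--Lieberman together with \cite{Og07}, so $c_\ess\le 1$), and complex tori (Proposition~\ref{p:tori_conj1a}). So there is no paper proof to compare yours against, and your text, read as a proof, has a genuine gap --- one you candidly flag yourself. Step 1 is fine: after \cite[Proposition 2.6]{DLOZI} it suffices to exhibit a single $\ol G$-invariant flag of length $\le\dim X$ on $H^{1,1}(X,\C)$ with trivial graded pieces. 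But Step 2 does not produce it. The inductive splice of a ``horizontal'' flag from the base and a ``vertical'' flag from the fibres is exactly what fails for the nilpotency class: unlike the derived length, where $\ell(\ol G)\le\ell(N)+\ell(Q)$ lets Theorem~\ref{thm-DLOZ} pass through an extension $1\to N\to\ol G\to Q\to 1$, the class of an extension is not bounded by the classes of $N$ and $Q$, and nothing in the argument forces the two partial flags to interleave into one $\ol G$-invariant chain. The missing ingredient you name --- a group-equivariant version of the Jordan-type chain of nef classes available for a single zero-entropy automorphism --- is precisely the ``dynamical filtration'' problem left open in \cite{DLOZI}; asserting that it ``should'' exist is restating the conjecture, not proving it.

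If you want to see what a complete argument looks like in the one genuinely multi-step case the paper does settle, look at Proposition~\ref{p:tori_conj1a}: there the required flag on $H^1(T)$ is not spliced from a fibration but comes from an algebraic identity valid for the whole group at once --- Lemma~\ref{lem-rep} writes $\Id-g$ for $g\in\Gamma_l G$ as a sum of products of at least $l+1$ operators $h(\Id-g')$, and Lemma~\ref{lem-ann} kills any such product of $\dim T$ factors because the action on $H^{1,0}(T)$ is unipotent on a space of dimension exactly $\dim T$; the drop from $\dim T$ to $\dim T-1$ then needs extra geometric input (a proper invariant subtorus, as in Proposition~\ref{pro-triv}, or the fixed origin). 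This is the kind of simultaneous, whole-group mechanism your Step 2 would need on $H^{1,1}(X,\C)$, where $h^{1,1}$ can be much larger than $\dim X$; in its absence your proposal is a reasonable research program but not a proof, and it should be presented as such.
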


We notice that Conjecture \ref{conj-DLOZ} holds
for compact K\"ahler surfaces
and compact hyperk\"ahler manifolds.
In both cases, a simple combination of  
Fujiki--Lieberman's theorem and~\cite[Theorem 2.1]{Og07} applied to the positive cones
(defined by the intersection pairing and the Beauville--Bogomolov--Fujiki form respectively), 
shows that $G/G_0$ is a virtual abelian group of finite rank, hence $c_\ess (G, X) \le 1$.
For complex tori, Conjecture \ref{conj-DLOZ} also holds (see Proposition~\ref{p:tori_conj1a}),
and the upper bound is optimal for every $n = \dim X$~\cite[\S4.2]{DLOZI}.

\begin{remark}
	\emph{A priori} Conjecture \ref{conj-DLOZ} looks weaker than the upper bound 
	$$c_\ess(G,X) \le \dim X- \max\{\gk(X),1\}$$
	originally conjectured in~\cite[Conjecture 1.4]{DLOZI}.
	They are actually equivalent by Theorem~\ref{thm-kge0} below.
\end{remark}

\ssec{Virtual invariants of zero entropy subgroups}\label{ssec-lvir}
\hfill

The essential invariants $\ell_\ess(G,X)$ and $c_\ess(G,X)$
are not intrinsic to $G$, since they depend on how $G$ acts on $H^\bullet(X,\C)$.
For intrinsic invariants associated to $G$,
it is natural to consider the
\emph{virtual derived length} 
$\ell_\vir(G)$
and the \emph{virtual nilpotency class}
$c_\vir(G)$.
These invariants are defined 
for any group $H$, simply as
$$\ell_\vir(H) := \min \left\{\ell(H') \mid H' \le_\fii H \right\} \in \Z_{\ge 0} \cup \{\infty\},$$
$$c_\vir(H) := \min \left\{c(H') \mid H' \le_\fii H \right\} \in \Z_{\ge 0} \cup \{\infty\},$$
where the notation $H' \le_\fii H$ means that
$H'$ is a finite index subgroup of $H$.

Not only the virtual invariants $\ell_\vir(G)$ and $c_\vir(G)$ 
are intrinsic to the group $G$, 
but they are also more refined than their essential counterparts.
Indeed, in the definition of essential invariants,
we quotient out subgroups contained in $\Aut^0(X)$.
So we always have
$$\ell_\ess(G,X) \le \ell_\vir(G) \text{ and }
c_\ess(G,X) \le c_\vir(G).$$
Also, while the essential invariants 
depend only on $G \cto H^\bullet(X)$,
the virtual invariants capture some geometric information of $G \cto X$.
The main purpose of this paper is to show that
starting from
the upper bounds of
cohomological nature such as Theorem~\ref{thm-DLOZ} and Conjecture~\ref{conj-DLOZ},
we can derive
upper bounds and statements of
\emph{geometric} nature such as Corollary~\ref{t:gen_fiberDk} and Theorem~\ref{thm-kge0}.

First we show that when $\Aut^0(X)$ is compact
(namely, a complex torus),
we have the following relations
between virtual invariants and essential invariants.

\begin{thm}\label{thm-tuerAut0} 
	Let $X$ be a compact K\"ahler manifold such that 
	$\Aut^0(X)$ is a complex torus
	and let  $a : X \to A_X$ be the Albanese map.
	For every zero entropy subgroup $G \le \Aut(X)$, we have
	$$\ell_\vir(G) = \max\(\ell_\ess(G,X), \ell_\vir(G|_{A_X})\)$$
	and
	$$c_\vir(G) = \max\(c_\ess(G,X), c_\vir(G|_{A_X})\).  $$
\end{thm}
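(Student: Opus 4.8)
The plan is to prove the two equalities simultaneously, since the arguments for derived length and nilpotency class are formally identical once one keeps track of the defining series. Write $A = A_X$ and let $G|_A$ denote the image of $G$ under the natural map $\Aut(X) \to \Aut(A)$ coming from functoriality of the Albanese: every $f \in \Aut(X)$ descends to an automorphism of $A$ since $a$ is the Albanese map. The kernel of $G \to G|_A$ consists of automorphisms acting trivially on $A$, and a standard fact (used already in~\cite{DLOZI}) is that an automorphism of $X$ inducing the identity on the Albanese torus, hence on $H^{1,0}(X)$ and $H^{0,1}(X)$, lies in $\Aut^0(X)$; under our hypothesis $\Aut^0(X)$ is a complex torus, so it acts trivially on all of $H^{\bullet}(X,\C)$. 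Thus the kernel of $G \to G|_A$ is contained in $G_0 = G \cap \Aut^0(X)$, and in fact equals $G_0$ up to finite index (the torus $\Aut^0(X)$ acts trivially on $A$ as well, being connected and landing in $\Aut^0(A)$ which acts by translations, while $G_0$ also fixes the origin — here one may need to pass to a finite-index subgroup, which is harmless for virtual invariants).

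The inequality $\ell_\vir(G) \ge \max(\ell_\ess(G,X), \ell_\vir(G|_A))$ is the easy direction. For any finite index $G' \le_\fii G$ we have $\ell(G') \ge \ell(G'/G'_0) = \ell_\ess(G,X)$ since $\ell_\ess$ is independent of the choice of $G'$ (by~\cite[Proposition 2.6]{DLOZI}), giving $\ell_\vir(G) \ge \ell_\ess(G,X)$; and the surjection $G' \tto G'|_A$ yields $\ell(G') \ge \ell(G'|_A) \ge \ell_\vir(G|_A)$, so $\ell_\vir(G) \ge \ell_\vir(G|_A)$. The same holds verbatim for $c$ in place of $\ell$. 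Combining gives $\ell_\vir(G) \ge \max(\ldots)$ and $c_\vir(G) \ge \max(\ldots)$.

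For the reverse inequality, the idea is to build a good finite-index subgroup realizing the maximum. Choose $G' \le_\fii G$ such that $\ell(G'/G'_0) = \ell_\ess(G,X)$ and such that $G'/G'_0$ acts as a unipotent group on each $H^{p,p}$ (possible by~\cite[Proposition 2.6]{DLOZI}), and simultaneously arrange $\ell(G'|_A) = \ell_\vir(G|_A)$ by further shrinking (the image of a finite-index subgroup is finite-index). Now consider the extension
$$1 \to G'_0 \to G' \to G'|_A \to 1,$$
where, by the first paragraph, $G'_0$ is (a finite-index subgroup of) the kernel of $G' \to G'|_A$. The point is that $G'_0 \le \Aut^0(X)$ is a subgroup of a complex torus, hence abelian, so $G'_0$ is central-by-nothing — but more importantly, since $G'_0$ is a subgroup of the torus $\Aut^0(X)$ which is a \emph{divisible} abelian group, and since the conjugation action of $G'$ on $\Aut^0(X)$ is by group automorphisms of the torus (algebraic, hence through a finite quotient after passing to finite index, or through the induced action on $H^1$), one shows that a further finite-index subgroup $G'' \le_\fii G'$ can be chosen so that $G''_0$ is \emph{central} in $G''$. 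Concretely: the action of $G'$ on $T_0\Aut^0(X) = H^0(X,\Omega_X)^\vee$ (or equivalently on $H^1(A,\Z)$) has unipotent, hence in particular, after passing to a finite-index subgroup, we can ensure $G''$ acts on $\Aut^0(X)$ through an action that fixes $G''_0$ pointwise — this is where zero entropy enters, guaranteeing the relevant linear action is virtually unipotent with no nontrivial finite orbits on the lattice. Granting a central extension
$$1 \to G''_0 \to G'' \to G''|_A \to 1$$
with $G''_0$ central and abelian, the derived series satisfies $(G'')^{(i)} = $ image-lift of $(G''|_A)^{(i)}$ for $i \ge 1$, more precisely $(G'')^{(i)}$ surjects onto $(G''|_A)^{(i)}$ with kernel inside $G''_0$ and $(G'')^{(i)} = 1$ once $(G''|_A)^{(i)} = 1$ because centrality kills the contribution of $G''_0$ to all commutators of positive depth. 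Hence $\ell(G'') = \max(\ell(G''/G''_0), \ell(G''|_A)) = \max(\ell_\ess(G,X), \ell_\vir(G|_A))$ — wait, one must check $\ell(G''/G''_0)$ still equals $\ell_\ess$, which it does by the independence statement. For nilpotency: with $G''_0$ central, $\Gamma_i G''$ surjects onto $\Gamma_i (G''|_A)$ and $\Gamma_1 G'' = [G'',G'']$ already lies in the preimage avoiding new $G''_0$-directions in $\Gamma_2$ onward, so $c(G'') = \max(c(G''/G''_0), c(G''|_A)) = \max(c_\ess(G,X), c_\vir(G|_A))$. This gives $\ell_\vir(G) \le \ell(G'') = \max(\ldots)$ and likewise for $c_\vir$, completing the proof.

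\textbf{Main obstacle.} The delicate point is the passage to a finite-index subgroup $G''$ for which $G''_0$ becomes \emph{central} in $G''$ — equivalently, controlling the conjugation action of $G$ on $\Aut^0(X) \cong$ (complex torus). One has to use that this action is by automorphisms of a torus together with the zero-entropy hypothesis to ensure that, virtually, this action is trivial on the relevant finitely generated subgroup $G_0$; without centrality, commutators $[g, t]$ with $t \in G_0$ could perpetually feed nontrivial elements into the higher terms of the derived/lower central series and inflate $\ell(G'')$ or $c(G'')$ beyond the claimed maximum. Making this reduction precise — likely via the action on $H^1(X,\Z)$, Fujiki–Lieberman type finiteness, and a Jordan-type or unipotence argument on the image in $\GL(H^1)$ — is the technical heart of the argument.
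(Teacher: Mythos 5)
Your easy direction is fine, but the reverse inequality has two genuine gaps. First, the structural claim on which everything rests is false: the kernel of $G \to G|_{A_X}$ is \emph{not} contained in $G_0$ (even up to finite index). An automorphism acting trivially on $A_X$ acts trivially on $H^1(X)$, but nothing forces it into $\Aut^0(X)$; Fujiki--Lieberman requires a fixed K\"ahler class, not triviality on $H^1$. Concretely, if $q(X)=0$ (e.g.\ $X$ a K3 surface, allowed here since $\Aut^0(X)=1$ is a torus), the kernel is all of $G$ while $G_0$ is trivial. The fact the paper actually uses points the other way: the restriction $\Aut^0(X)\to\Aut^0(A_X)$ has \emph{finite kernel} (Fujiki, Theorem 5.5), i.e.\ only elements already inside $\Aut^0(X)$ with trivial image on $A_X$ are controlled, and only up to a finite group.

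Second, even if you could arrange a central extension $1 \to G''_0 \to G'' \to G''|_{A_X} \to 1$ with $G''_0$ abelian and central, the concluding identities $\ell(G'')=\max\(\ell(G''/G''_0),\ell(G''|_{A_X})\)$ and $c(G'')=\max\(c(G''/G''_0),c(G''|_{A_X})\)$ are not consequences of centrality: for the integral Heisenberg group with $Z$ its center, both $G''/Z$ and the quotient have class $1$ while $c(G'')=2$, and this persists for every finite-index subgroup, so no further shrinking can repair it. Centrality only gives the bounds with an extra $+1$, which is exactly the loss the theorem must avoid. The missing ingredient is the paper's connectedness lemma (Lemma~\ref{lem-conn}): after passing to a suitable finite-index subgroup one ensures that $\Gamma_m G \subset \Aut^0(X)$ if and only if the Zariski closure $\ol{\Gamma_m G}$ is connected (this uses compactness of $\Aut^0(X)$, Noetherianity of its Zariski topology, and Mal'cev-type finite generation, and is the technical heart of the argument). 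Then, for $c=\max\(c_\ess(G,X), c(G|_{A_X})\)$, the group $\Gamma_c G$ lies in $\Aut^0(X)$ and has trivial image in $\Aut(A_X)$, hence sits in the finite kernel of $\Aut^0(X)\to\Aut^0(A_X)$; since its Zariski closure is connected, it is trivial, so $c(G)\le c$ (and similarly for the derived series). Your proposal has no substitute for this connectedness control, which is where the geometry (compactness of $\Aut^0(X)$), rather than group theory of central extensions, does the work.
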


Here, if $G$ is a group acting on a complex manifold $Y$, then $G|_Y$ denotes the image of $G$ in $\Aut(Y)$.
From Theorem~\ref{thm-tuerAut0} together with Theorem~\ref{thm-DLOZ},
we obtain the following upper bound of $\ell_\vir(G)$.

\begin{cor}\label{cor-lvir}
	Let $X$ be a compact K\"ahler manifold 
	and let $G \le \Aut(X)$ be a zero entropy subgroup.
	Assume that $\Aut^0(X)$ is a complex torus.
	Then
	$$\ell_\vir(G) \le 
	\begin{cases}
		\dim X \text{ if }  \dim X \le 2 \\
		\dim X - 1 \text{ if } \dim X \ge 3.
	\end{cases}
	$$
	When $\dim X = 2$, the upper bound $\ell_\vir(G) \le 2$ is optimal.
\end{cor}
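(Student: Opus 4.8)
The plan is to deduce Corollary~\ref{cor-lvir} by combining Theorem~\ref{thm-tuerAut0}, Theorem~\ref{thm-DLOZ} applied twice (once to $X$ and once to the Albanese torus $A_X$), and the known optimal statement for complex tori cited from~\cite[\S4.2]{DLOZI}. First I would use Theorem~\ref{thm-tuerAut0}, whose hypothesis (that $\Aut^0(X)$ is a complex torus) is exactly our assumption, to write
$$\ell_\vir(G) = \max\(\ell_\ess(G,X),\, \ell_\vir(G|_{A_X})\).$$
By Theorem~\ref{thm-DLOZ} the first term satisfies $\ell_\ess(G,X) \le \dim X - 1$. For the second term, note that $G|_{A_X}$ is a zero entropy subgroup of $\Aut(A_X)$ (images of zero entropy automorphisms under an equivariant map remain of zero entropy, as the action on cohomology is a subquotient), so it remains to bound $\ell_\vir$ of a zero entropy subgroup acting on the complex torus $A_X$.

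The key intermediate step is therefore a bound of the form $\ell_\vir(H) \le \dim A_X$ for a zero entropy subgroup $H \le \Aut(A_X)$ of a complex torus, together with the refinement $\ell_\vir(H) \le \dim A_X - 1$ when $\dim A_X \ge 3$. For a complex torus $A$, after passing to a finite index subgroup every automorphism is an affine map, and the linear parts generate a zero entropy subgroup of $\GL$ acting on $H^{1,0}(A)\oplus H^{0,1}(A)$; the translation part contributes at most one more step of solvability. So the essential bound $\ell_\ess(H,A) \le \dim A - 1$ from Theorem~\ref{thm-DLOZ} plus one for translations gives $\ell_\vir(H|_A) \le \dim A$ in general, and when $\dim A \ge 3$ one checks (as recorded in~\cite[\S4.2]{DLOZI}) that the translation contribution can be absorbed, yielding $\ell_\vir(H|_A) \le \dim A - 1$. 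Since $\dim A_X = q(X) \le \dim X$, substituting back gives $\ell_\vir(G) \le \dim X$ always, and $\ell_\vir(G) \le \dim X - 1$ once $\dim X \ge 3$ (here when $\dim X \ge 3$ we also have $\dim A_X \le \dim X$, and if $\dim A_X \le 2$ the torus term is $\le 2 \le \dim X -1$, while if $\dim A_X \ge 3$ it is $\le \dim A_X - 1 \le \dim X - 1$).

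For the optimality claim in dimension $2$, I would exhibit a compact K\"ahler surface $X$ with $\Aut^0(X)$ a complex torus carrying a zero entropy group $G$ with $\ell_\vir(G) = 2$: the natural candidate is a $2$-dimensional complex torus $A$ itself (here $\Aut^0(A) = A$ is a complex torus and $A_X = A$), with $G$ a suitable solvable but not virtually abelian group of affine transformations, as constructed in~\cite[\S4.2]{DLOZI}; one must check that such $G$ has zero entropy and that no finite index subgroup is abelian, so $\ell_\vir(G) \ge 2$, while $\ell_\vir(G) \le 2$ by the upper bound just proved.

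The main obstacle is the complex torus input: one must verify carefully that for a zero entropy subgroup of $\Aut(A)$ with $\dim A \ge 3$, the translation part does not increase the virtual derived length beyond $\dim A - 1$ — i.e. that the short exact sequence relating $H$ to its image in $\GL(H^1(A))$ splits up to finite index in a way compatible with the derived series bound, which is where the detailed structure theory of~\cite[\S4.2]{DLOZI} and Proposition~\ref{p:tori_conj1a} enters. The rest is bookkeeping with $\max$ and the inequalities $\dim A_X \le \dim X$.
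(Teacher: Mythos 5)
Your skeleton (Theorem~\ref{thm-tuerAut0}, then Theorem~\ref{thm-DLOZ} for $\ell_\ess(G,X)\le \dim X-1$, then a bound for the torus factor, then the surface example of \cite[\S4.2]{DLOZI} for optimality) is the same as the paper's, but the torus step — which you yourself flag as the main obstacle — contains a genuine gap and rests on a false inequality. The relation $\dim A_X=q(X)\le \dim X$ is not true for compact K\"ahler manifolds: a curve of genus $g\ge 2$ has $q=g>1$, and irregular manifolds of general type give higher-dimensional instances. So a bound on $\ell_\vir(G|_{A_X})$ expressed in terms of $\dim A_X$ cannot be converted into the required bound in terms of $\dim X$. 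The paper never does this; instead, as in the proof of Corollary~\ref{cor-kge0w}, it works with the image $a(X)\subseteq A_X$, which is a torus bundle over a base $B$ of general type. After passing to a finite index subgroup, $G$ acts trivially on $B$, and Proposition~\ref{pro-triv} applied to a general fiber (a subtorus of dimension $\dim a(X)-\dim B\le \dim X$) yields $\ell_\vir(G|_{A_X})\le \dim X$, with equality forcing $a$ to be generically finite and surjective; only in that case is $\dim A_X=\dim X$, and then the sharper estimate $\ell(G|_{A_X})\le \lfloor \log_2 \dim A_X\rfloor+1$ settles the case $\dim X\ge 3$.

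Second, your refinement that ``the translation contribution can be absorbed when $\dim A\ge 3$'' is asserted rather than proved, and the sources you point to do not supply it: \cite[\S4.2]{DLOZI} contains optimality examples, and Proposition~\ref{p:tori_conj1a} only controls $G/G_0$, i.e.\ the action modulo translations. Your own scheme ``essential derived length $\le \dim A-1$ plus one for translations'' gives only $\dim A$ (for $\dim A=3$ it gives $3$, not the needed $2$), so the refinement does not follow from what you cite. The mechanism that actually works is Proposition~\ref{pro-triv}(1): the whole affine group, translations included, is nilpotent of class at most $\dim A$ (via Lemmas~\ref{lem-rep}, \ref{lem-ann} and \ref{lem-repb}), hence has derived length at most $\lfloor \log_2 \dim A\rfloor+1$, which is $\le \dim A-1$ as soon as $\dim A\ge 3$. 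In short, the two missing ingredients are Proposition~\ref{pro-triv} and the Ueno-fibration argument of Corollary~\ref{cor-kge0w}; your treatment of the optimality statement (a surface example with $c_\vir(G)=2$, hence $\ell_\vir(G)=2$) does match the paper.
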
 

From Corollary~\ref{cor-lvir} we can prove as a corollary
the following upper bound of $\ell_\vir(G)$ involving the Kodaira dimension $\gk(X)$ of $X$.

\begin{cor}\label{t:gen_fiberDk}
	Let $X$ be a compact K\"ahler manifold  and let $G \le \Aut(X)$ a zero entropy subgroup.
	We have
	$$\ell_\vir(G) \le \dim X - \gk(X).$$
\end{cor}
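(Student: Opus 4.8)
The plan is to reduce the statement to Corollary~\ref{cor-lvir} by passing to a suitable equivariant fibration that trivializes the obstruction coming from a non-compact $\Aut^0$. The key point is that $\gk(X) > 0$ forces the Iitaka fibration to be non-trivial, and one can run $G$-equivariantly through a finite-index subgroup.

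\textbf{Step 1: Reduce to the case $\gk(X) \ge 0$.} If $\gk(X) = -\infty$, the asserted inequality reads $\ell_\vir(G) \le \dim X + \infty$, which is vacuous (or, interpreting $\dim X - \gk(X)$ suitably, trivially true). So we may assume $\gk(X) \ge 0$. If $\gk(X) = 0$ the inequality $\ell_\vir(G) \le \dim X$ is automatic when $\dim X \le 2$ by Corollary~\ref{cor-lvir}, and for $\dim X \ge 3$ we would still want $\ell_\vir(G)\le \dim X$, which is weaker than what Corollary~\ref{cor-lvir} gives once $\Aut^0(X)$ is a torus; but in general $\Aut^0(X)$ need not be compact, so one still has to argue. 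The substantive cases are therefore $\gk(X) \ge 1$.

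\textbf{Step 2: Use the Iitaka fibration equivariantly.} Since $\Aut(X)$ preserves the canonical bundle, it acts on the Iitaka fibration. After replacing $X$ by a suitable smooth birational model (on which $\Aut(X)$ still acts, using e.g. equivariant resolution, and noting that $\ell_\vir$ of the image of $G$ can only decrease or stay put under these birational modifications since $G$ still acts on the base), we obtain a $G$-equivariant fibration $\varphi \colon X \to Y$ with $\dim Y = \gk(X)$ and general fiber $F$ of Kodaira dimension $0$. The group $G$ acts on $Y$; let $G|_Y$ be its image. Crucially, $Y$ is of general type after a base change / is a variety with $\gk(Y) = \dim Y$, so $\Aut(Y)$ is finite; hence $G|_Y$ is finite, and a finite-index subgroup $G_1 \le G$ acts trivially on $Y$, i.e. preserves every fiber.

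\textbf{Step 3: Work fiberwise and apply Corollary~\ref{cor-lvir}.} Restricting $G_1$ to a general fiber $F$, which is a compact K\"ahler manifold of dimension $\dim X - \gk(X)$ with $\gk(F) = 0$ (one has to check that $F$ really has $\gk(F)=0$, which holds for the general fiber of the Iitaka fibration), we get a zero entropy subgroup $G_1|_F \le \Aut(F)$. If $\Aut^0(F)$ is a complex torus — which is automatic when $\gk(F)=0$ because a positive-dimensional $\Aut^0(F)$ acting on a manifold of Kodaira dimension $0$ must be a torus (a non-torus connected group would produce rational curves, contradicting $\gk(F)\ge 0$; more carefully, $\Aut^0(F)$ is an extension of a torus by a linear group, and the linear part is trivial when $\gk(F)\ge 0$) — then Corollary~\ref{cor-lvir} gives $\ell_\vir(G_1|_F) \le \dim F$ when $\dim F \le 2$ and $\le \dim F - 1$ when $\dim F \ge 3$, so in all cases $\ell_\vir(G_1|_F) \le \dim F = \dim X - \gk(X)$. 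It remains to compare $\ell_\vir(G)$ with $\ell_\vir(G_1|_F)$: since $G_1 \le_\fii G$, we have $\ell_\vir(G) = \ell_\vir(G_1)$; and $\ell(G_1) \le$ something related to $\ell(G_1|_F)$ plus the derived length of the kernel $K$ of $G_1 \to \Aut(F)$. Here the main technical input is that this kernel $K$ acts trivially on a general fiber and on the base, hence (by rigidity of automorphisms, e.g. an automorphism of $X$ acting trivially on a general fiber of $\varphi$ and on $Y$ lies in a group acting "fiberwise trivially", which for a fibration with fibers of Kodaira dimension $0$ one controls) $K$ is itself virtually abelian or contained in $\Aut^0$; so $\ell_\vir(G) \le \ell_\vir(G_1|_F) \le \dim X - \gk(X)$.

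\textbf{Main obstacle.} The hard part will be Step 3's bookkeeping: controlling the kernel $K$ of the restriction map $G_1 \to \Aut(F)$ and showing it does not increase the virtual derived length beyond $\dim F$. An automorphism in $K$ acts trivially on the base and on a general fiber, but not necessarily on all fibers, so one cannot immediately say $K = 1$. One likely routes through the structure theory already developed in~\cite{DLOZI} — in particular the analysis via $\Aut^0$ and the Albanese map as in Theorem~\ref{thm-tuerAut0} — to see that such a $K$ contributes at most the defect already absorbed into the $\dim F$ (versus $\dim F - 1$) slack of Corollary~\ref{cor-lvir}. A cleaner alternative is to choose the birational model so that $\varphi$ is (after finite base change) an \emph{analytic fiber bundle} or at least isotrivial on a Zariski-open set, turning $K$ into a subgroup of automorphisms of a fixed fiber; then $K|_F$ is again zero entropy and one applies Corollary~\ref{cor-lvir} directly to the fiber, summing the two contributions and checking the sum is still $\le \dim X - \gk(X)$ using $\dim F + \dim Y = \dim X$ and $\ell$'s subadditivity under extensions.
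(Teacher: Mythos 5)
Your Steps 1--2 follow the paper's reduction: pass to $\gk(X)\ge 0$ (so $\Aut^0(X)$ is a torus by Lemma~\ref{l:non-uni_aut}), take a $G$-equivariant regular model of the Iitaka fibration, and replace $G$ by a finite-index subgroup acting trivially on the base, so that $G\le\Aut(X'/B)$ with general fiber $F$ of dimension $\dim X-\gk(X)$ and $\gk(F)=0$. (A small inaccuracy: the base of the Iitaka fibration need not be of general type; the finiteness of $G|_{B_0}$ is the cited result of Ueno and Nakayama--Zhang, not finiteness of $\Aut$ of a general type base. This does not affect the argument.) The genuine gap is exactly the ``main obstacle'' you flag in Step 3 and leave unresolved: bounding $\ell_\vir(G)$ by $\ell_\vir(G|_F)$ requires controlling the kernel $K$ of restriction to a \emph{single} general fiber, and none of your suggested fixes works. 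In particular, the fallback via subadditivity of derived length under extensions would only give $\ell(G_1)\le \ell(K)+\ell(G_1|_F)$, which can exceed $\dim F$ unless you prove $K$ trivial (or of derived length $0$), and there is no a priori reason for an automorphism acting trivially on the base and on one fiber to lie in $\Aut^0$ or to generate a virtually abelian group; an isotrivial model is also not available in general.

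The paper's resolution (Corollary~\ref{cor-basefix}) avoids bounding $K$ altogether by working with \emph{very general} fibers and a countability argument. One first passes to the Zariski closure $\ol G$ and uses Lemma~\ref{lem:basefix} (compactness of $\Aut^0$ plus Mal'cev/Hall) to see that $\ol G$ has only countably many finite-index subgroups. For each very general fiber $F$, Corollary~\ref{cor-lvir} gives a finite-index subgroup $H_F\le G|_F$ with $\ell(H_F)\le\dim F$, and its preimage $G'_F\le G$ has finite index; the countability statement forces $G'_F$ to be \emph{one and the same} subgroup $G'$ for all very general $F$. Then $G'^{(\dim F)}$ restricts to the identity on every very general fiber, and since these fibers fill the complement of a countable union of proper subvarieties, $G'^{(\dim F)}=\{\Id\}$, giving $\ell(G')\le\dim F=\dim X-\gk(X)$ directly. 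This uniformization over very general fibers is the missing idea in your proposal; with it, the kernel question you struggle with simply never arises.
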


The estimate in Corollary~\ref{t:gen_fiberDk} mainly
concerns manifolds of Kodaira dimension $\gk \ge 0$.
However, note that it is possible that $\ell_\vir(G) = \infty$
(e.g. when $X = \P^N$
and $G = \Aut(X) = \PGL_{N+1}(\C)$, which is a zero entropy subgroup), 
in which case the corollary asserts that $\gk(X) = -\infty$.

As for the virtual nilpotency class $c_\vir(G)$,
based on Theorem~\ref{thm-tuerAut0}
we prove the following.

\begin{thm}\label{thm-kge0} 
	
	Let $X$ be a compact K\"ahler manifold
	and	let  $G \le \Aut(X)$ be a zero entropy subgroup.
	Assume that Conjecture~\ref{conj-DLOZ}
	holds for every compact K\"ahler manifold of dimension $\dim X - \gk(X)$ and of Kodaira dimension $\gk = 0$.
	Then the following assertions hold.
	\begin{itemize}
		\item[(1)]
		$c_\ess(G,X) \le c_\vir(G) \le \dim X - \gk(X)$.
		\item[(2)] 
		If 
		$$c_\vir(G) = \dim X - \gk(X),$$ 
		then $X$ is $G$-equivariantly bimeromorphic to a
		compact K\"ahler manifold $X'$
		with a biregular $G$-action and a regular
		$G$-equivariant Iitaka fibration $f: X' \to B$, whose 
		general fiber  is a complex torus.
		\item[(3)] Assume that $\Aut^0(X)$ is a complex torus (e.g. when $X$ is non-uniruled; see Lemma~\ref{l:non-uni_aut}).
		If  $c_\vir(G) = \dim X $, then $X$ is a complex torus.
	\end{itemize}
\end{thm}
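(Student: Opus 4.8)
The plan is to deduce all three assertions from Theorem~\ref{thm-tuerAut0}, Theorem~\ref{thm-DLOZ}, and the hypothesis on Conjecture~\ref{conj-DLOZ}, together with an induction on dimension. For part (1), I would first reduce to the case where $\gk(X) \ge 0$, since otherwise the inequality $c_\vir(G) \le \dim X - \gk(X)$ is trivially true (the right-hand side being $+\infty$). When $\gk(X) \ge 0$, the Iitaka fibration $X \dashrightarrow B$ with $\dim B = \gk(X)$ has general fiber $F$ a smooth compact K\"ahler manifold with $\gk(F) = 0$; after replacing $X$ by a $G$-equivariant smooth model I may assume the Iitaka fibration is a $G$-equivariant morphism $f : X \to B$. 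The key point is that $G$ acts on $B$ through a finite group (since $\Aut(B)$ is finite for a variety of general type, or more precisely using that automorphisms preserving the canonical ring act with finite image on the Iitaka base), so after passing to a finite index subgroup $G$ acts trivially on $B$ and hence preserves $F$; one then bounds $c_\vir(G)$ by $c_\vir(G|_F)$ plus a correction coming from the kernel, and applies Theorem~\ref{thm-tuerAut0} to $F$ — here $\Aut^0(F)$ is a complex torus because a manifold with $\gk = 0$ is non-uniruled, so Lemma~\ref{l:non-uni_aut} applies — which gives $c_\vir(G|_F) = \max(c_\ess(G|_F, F), c_\vir(G|_{A_F}))$. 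The first term is bounded by $\dim F - 1 = \dim F$ only if we use Conjecture~\ref{conj-DLOZ} in dimension $\dim F = \dim X - \gk(X)$ and Kodaira dimension $0$, which is exactly the hypothesis; the second term is handled by induction on dimension since $\dim A_F < \dim F$ unless $F$ is itself a torus.

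For part (2), assuming the equality $c_\vir(G) = \dim X - \gk(X)$, I would trace through the inequalities in part (1) and force each one to be an equality. This pins down the structure: the general fiber $F$ of the Iitaka fibration must satisfy $c_\vir(G|_F) = \dim F$, and by Theorem~\ref{thm-tuerAut0} and the equality case, either $c_\ess(G|_F, F) = \dim F$ (which, under Conjecture~\ref{conj-DLOZ}, cannot happen since that conjecture would give $c_\ess \le \dim F - 1$ — wait, so this case is excluded) or $c_\vir(G|_{A_F}) = \dim F$, which by part (3) applied to the torus $A_F$ (combined with $\dim A_F \le \dim F$) forces $\dim A_F = \dim F$, i.e. $F$ itself is bimeromorphic to the complex torus $A_F$; since a manifold with $\gk = 0$ that is bimeromorphic to a torus and is smooth K\"ahler is a torus (or at least has a torus as a smooth minimal model), one concludes the general fiber of the Iitaka fibration is a complex torus. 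Assembling, one produces the smooth model $X'$ with regular $G$-equivariant Iitaka fibration $f : X' \to B$ whose general fiber is a torus.

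For part (3), the hypothesis $\Aut^0(X)$ is a complex torus lets me apply Theorem~\ref{thm-tuerAut0} directly: $c_\vir(G) = \max(c_\ess(G,X), c_\vir(G|_{A_X}))$. If $c_\vir(G) = \dim X$, then since $c_\ess(G,X) \le \dim X - 1$ under Conjecture~\ref{conj-DLOZ} in the relevant dimension (note $\gk(X) = -\infty$ or $\ge 0$; if $\gk(X) \ge 1$ the bound $\dim X - \gk(X) < \dim X$ from part (1) already contradicts $c_\vir(G) = \dim X$, so $\gk(X) \le 0$, and if $\gk(X) = 0$ the hypothesis applies in dimension $\dim X$), the maximum must be achieved by $c_\vir(G|_{A_X}) = \dim X$. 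Then the Albanese map $a : X \to A_X$ satisfies $\dim A_X \ge c_\vir(G|_{A_X}) = \dim X$ — using that for a complex torus $T$ one has $c_\vir(\Aut(T)|_T) \le \dim T$, which follows from the structure of automorphism groups of tori and the zero entropy hypothesis (a zero-entropy subgroup of $\GL_n(\Z)$ acting on $H^1$ is virtually unipotent of class $\le n$, cf. the complex torus case cited after Conjecture~\ref{conj-DLOZ}) — hence $\dim A_X = \dim X$, so $a$ is a bimeromorphic morphism between smooth manifolds with $X$ non-uniruled, which forces $a$ to be an isomorphism and $X \cong A_X$ a complex torus.

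\textbf{Main obstacle.} I expect the hard part to be making the fibration argument in parts (1) and (2) genuinely work $G$-equivariantly: one must find a single smooth K\"ahler model $X'$ on which $G$ acts biregularly \emph{and} the Iitaka fibration is a morphism, control the action of $G$ on the base $B$ (showing it is finite after passing to a finite-index subgroup, which requires care since $B$ need not be of general type as a variety but the Iitaka fibration's base carries a big canonical-type class preserved by $G$), and then bound $c_\vir(G)$ in terms of the fiberwise and base-wise contributions — the subtlety being that $G$ does not split as a product of its action on $B$ and on a fixed fiber, so one needs a short exact sequence $1 \to K \to G \to G|_B \to 1$ and the (elementary but must-be-checked) inequality $c_\vir(G) \le c_\vir(K) + $ (something finite) valid after passing to finite index, reducing to $c_\vir(G) = c_\vir(K|_F)$ once $G|_B$ is finite. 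Verifying that this reduction is compatible with the equality case needed for part (2), and that it does not lose track of the torus structure on the general fiber, is where the main work lies.
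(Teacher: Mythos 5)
Your overall skeleton matches the paper's: reduce to $\gk(X)\ge 0$, pass to a $G$-equivariant regular Iitaka fibration $f:X'\to B$ with $G$ acting trivially on $B$ after finite index, restrict to a general fiber $F$ with $\gk(F)=0$, and feed the Albanese map of $F$ (resp.\ of $X$ for part (3)) into Theorem~\ref{thm-tuerAut0}. However, two essential ideas are missing, and at both places your substitute either stops short or does not hold.

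First, the passage from $c\bigl(G'|_F\bigr)\le\dim F$ for one fiber to $c(G')\le\dim F$ on $X'$. You correctly flag that the kernel of $G\to G|_F$ is the problem, but the identity ``$c_\vir(G)=c_\vir(K|_F)$'' you posit is exactly what needs proof: an element of $\Aut(X'/B)$ can act trivially on one fiber without being trivial. The paper's resolution (Lemma~\ref{lem:basefix} and Corollary~\ref{cor-basefix}) is to replace $G$ by its Zariski closure, show that this closure has only countably many finite-index subgroups (Mal'cev finite generation plus compactness of $\Aut^0$), so the finite-index subgroup produced by restricting to a very general fiber can be taken \emph{independent of the fiber}; then $\Gamma_{\dim F}G'$ acts trivially on all very general fibers, hence on $X'$ by density. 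Without this uniformity-over-fibers argument the reduction to the fiber does not close.

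Second, part (3) and the equality case of (2). From $c_\vir(G|_{A_X})=\dim X$ you correctly get that $a:X\to A_X$ is generically finite and surjective, but your next step --- that $a$ is therefore bimeromorphic and then an isomorphism --- is unjustified: generically finite is not bimeromorphic, and even a bimeromorphic morphism onto a torus need not be an isomorphism (a blow-up of a torus is non-uniruled). The paper argues by contraposition: if $X$ is not a torus, then $a$ has non-empty ramification locus, an irreducible component $Y$ of its image in $A_X$ is $G$-stable after finite index, and the identity component of $\{t\in A_X : t+Y=Y\}$ is a \emph{proper} $G$-stable subtorus; Proposition~\ref{pro-triv}(2) then improves the bound to $c_\vir(G|_{A_X})\le\dim X-1$, a contradiction. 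This stable-subtorus argument is the content of the equality statement in Corollary~\ref{cor-kge0w}, and it is also what forces the general Iitaka fiber in (2) to be an actual complex torus rather than merely bimeromorphic to one. As written, your proposal identifies the right intermediate statements but lacks the two mechanisms that prove them.
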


	As in Corollary~\ref{t:gen_fiberDk}, 
	the above statement also makes sense when $\gk(X) = -\infty$.
	For instance, if $c_\vir(G) = \infty$, then the first assertion implies $\gk(X) = -\infty$.

	\ssec{Optimal essential nilpotency class}
	\hfill

	In view of Theorem~\ref{thm-kge0}(3) and Conjecture~\ref{conj-DLOZ},
	we also ask how we can describe $X$ and $G \cto X$ where
	$G \le \Aut(X)$ is a zero entropy subgroup
	satisfying $c_\ess(G,X) = \dim X-1$.
	Presumably, we have $\gk(X) \le 1$ by Theorem~\ref{thm-kge0}; 
	we will focus on the case where $\gk(X) = 0$.
	At least when $b_1(X) \ne 0$, we expect the following answer.
	
	A compact K\"ahler variety is a {\it Q-torus} if it is the 
	quotient of a complex torus by a finite group acting freely in codimension $1$.

	\begin{conjecture}\label{conj:main2}
		Let $X$ be a compact K\"ahler manifold such that $\gk(X) = 0$ and
		$b_1(X) \ne 0$. Let $G \le \Aut(X)$ be a zero entropy subgroup. If $c_\ess(G,X) = \dim X -1$,
		then $X$ is bimeromorphic to a Q-torus.
	\end{conjecture}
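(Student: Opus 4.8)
The plan is to use the structure theory already at hand to reduce to one hard case, and then to establish for that case a structure theorem in the spirit of Theorem~\ref{thm-kge0}(2). Since $\gk(X)=0$, the manifold $X$ is non-uniruled, so $\Aut^0(X)$ is a complex torus by Lemma~\ref{l:non-uni_aut}, and we may apply Theorems~\ref{thm-tuerAut0} and~\ref{thm-kge0} (granting Conjecture~\ref{conj-DLOZ} in the relevant dimensions, as is needed to invoke Theorem~\ref{thm-kge0}); we are also free to replace $G$ by a finite-index subgroup at any point. By Theorem~\ref{thm-kge0}(1), $c_\vir(G)\le\dim X-\gk(X)=\dim X$, while $c_\vir(G)\ge c_\ess(G,X)=\dim X-1$, so $c_\vir(G)\in\{\dim X-1,\dim X\}$. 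If $c_\vir(G)=\dim X$, then Theorem~\ref{thm-kge0}(3) gives that $X$ is a complex torus, hence a Q-torus. If the irregularity $q(X):=\dim A_X$ equals $\dim X$, then, $\gk(X)$ being $0$, the classical criterion (Kawamata in the projective case together with its compact K\"ahler analogue) shows that the Albanese map $X\to A_X$ is bimeromorphic, so $X$ is bimeromorphic to a torus. We are thus reduced to the case
$$c_\vir(G)=\dim X-1,\qquad q(X)<\dim X.$$

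The crux is a structure theorem: I would prove that $X$ is $G$-equivariantly bimeromorphic to a compact K\"ahler manifold $X'$ carrying a biregular $G$-action and a $G$-equivariant fibration $f\colon X'\to B$ onto a smooth \emph{curve} $B$, whose general fiber is a complex torus of dimension $\dim X-1=c_\vir(G)$. The underlying principle should be that, in the equivariant tower of fibrations that powers the proof of Theorem~\ref{thm-kge0}, the dimension of the ``torus part'' it produces equals $c_\vir(G)$; when this equals $\dim X-\gk(X)$ the torus part is cut out by the Iitaka fibration (Theorem~\ref{thm-kge0}(2)), and the present hypothesis $c_\vir(G)=\dim X-1$ is the case in which it falls exactly one dimension short, the deficiency being a single one-dimensional base direction on which $G$ continues to act, necessarily with infinite image (otherwise a finite-index subgroup of $G$ would preserve every fiber of $f$ and one would get $c_\ess(G,X)\le\dim X-2$). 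I expect this to be the main obstacle: one must upgrade the equality analysis of Theorem~\ref{thm-kge0}(2) from the Iitaka fibration to an arbitrary $G$-equivariant fibration, and from the case $c_\vir(G)=\dim X-\gk(X)$ to the case one less, in particular showing that the missing direction is genuinely of torus type (it cannot be rational, as $X$ is non-uniruled) and that it collapses onto a curve rather than onto a higher-dimensional base.

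Granting such an $f\colon X'\to B$, I would finish as follows. The general fiber $F$ of $f$ is a torus, so $\gk(F)=0$; an addition-type inequality for fibrations with torus fibers over a curve (as in Kawamata's work), together with $\gk(X')=\gk(X)=0$, forces $\gk(B)\le 0$, so $B\cong\P^1$ or $B$ is an elliptic curve. The hypothesis $b_1(X)\ne 0$ excludes $B\cong\P^1$: a fibration over $\P^1$ with torus fibers and Kodaira dimension $0$ has $b_1=0$ (for surfaces this is the list ``torus, K3, bielliptic, Enriques'', of which only the last two fiber over $\P^1$; in general one argues with the Leray spectral sequence, using $H^1(\P^1,\C)=0$ and the absence of a nonzero invariant section of $R^1f_*\C$ for a nontrivial such family). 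Hence $B$ is an elliptic curve, and after replacing $G$ by a finite-index subgroup $G|_B$ is a group of translations of $B$, infinite by the preceding remark, hence Zariski-dense in $B$; invariance of the family $f$ under these translations forces $f$ to be isotrivial. Since $\gk(X')=0$ over an elliptic base also precludes multiple fibers, a finite \'etale base change $B'\to B$ trivializes $f$ up to its finite monodromy, so $X'\times_B B'$ is bimeromorphic to a complex torus $T$; as the deck group $\Gamma$ of $B'\to B$ then acts on a minimal model of $T$ freely in codimension one with $X'$ bimeromorphic to $T/\Gamma$, we conclude that $X$ is bimeromorphic to a Q-torus. The residual technical work is the careful treatment of the monodromy, of multiple fibers, and of the finite quotient, for which the classification of Kodaira dimension zero fibrations (e.g.\ work of Nakayama and Fujino) should provide the needed input.
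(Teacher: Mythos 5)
The statement you are trying to prove is stated in the paper as a \emph{conjecture}: the paper offers no proof of it, only supporting evidence, namely Proposition~\ref{t:main_3-long}, which treats the special case of a projective klt variety with $K_X\sim_\Q 0$ and $q(X)>0$, and even there only conditionally on Conjecture~\ref{conj-DLOZ} in lower dimensions. Its route is quite different from yours: instead of trying to produce a $G$-equivariant fibration over a curve, it uses the Kawamata-type splitting of \cite{NZ} to get a finite \'etale cover $S\times A\to X$ with $A$ abelian and $S$ weak Calabi--Yau with $q(S)=0$, and then kills the factor $S$ by comparing nilpotency classes (Proposition~\ref{p:tori_conj1a} on the abelian factor versus Conjecture~\ref{conj-DLOZ} on $S$). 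So your proposal cannot be ``the paper's proof by another route''; it is an attempt at an open problem, and it has genuine gaps.

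The central gap is the step you yourself flag as the crux: the claim that $c_\vir(G)=\dim X-1$ forces $X$ to be $G$-equivariantly bimeromorphic to an $X'$ fibered over a \emph{curve} with general fiber a complex torus of dimension $\dim X-1$. Nothing in the paper's machinery yields this. Theorem~\ref{thm-kge0}(2) analyzes the equality case $c_\vir(G)=\dim X-\gk(X)$ via the Iitaka fibration, but here $\gk(X)=0$, so the Iitaka fibration has a point as base and the ``tower of fibrations'' you appeal to is empty; there is no mechanism producing a one-dimensional base direction, nor any reason the deficiency $c_\vir(G)=\dim X-1$ is geometrically realized by a fibration at all (the essential/virtual class is a purely cohomological/group-theoretic quantity). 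In addition: (a) your whole argument is conditional on Conjecture~\ref{conj-DLOZ} (needed to invoke Theorem~\ref{thm-kge0}), while the conjecture is unconditional; and (b) the endgame contains a factual error: a bielliptic surface $(E\times F)/\Gamma$ has $\gk=0$, $b_1=2$, and \emph{does} admit an elliptic fibration over $F/\Gamma\cong\P^1$, so your claim that a torus fibration over $\P^1$ with $\gk=0$ forces $b_1=0$ (and hence your exclusion of $B\cong\P^1$) is false; that case would have to be handled differently (bielliptic surfaces are of course still Q-tori, so the conclusion is not contradicted, but your dichotomy argument breaks). As written, the proposal reduces the conjecture to an unproved structure theorem that is at least as hard as the conjecture itself.
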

	
	Note that the assumption $b_1(X) \ne 0$ is necessary,
	as there exist zero entropy subgroups $G \le \Aut(S)$ of infinite order
	for a K3 surface $S$; see e.g.~\cite{Og07}. 
	We do not know whether the condition $b_1(X) \ne 0$ can be removed in higher dimension.

	In~\cite[\S4.2]{DLOZI},
	we constructed some examples 
	of zero entropy subgroups $G \le \Aut(X)$ (e.g. when $X$ is a complex torus) 
	showing that the conjectural upper bound of $c_\ess(G,X)$ in Conjecture~\ref{conj-DLOZ} is optimal.
	We will
	provide more examples
	as well as pieces of evidence in  \S\ref{s:sing_proj} which support Conjecture~\ref{conj:main2},
	using techniques from the Minimal Model Program.

	\ssec{Terminology and Notation}\label{ssec-Not}
	\hfill
	
	In this paper, we work in the category of analytic spaces. All manifolds are assumed to be connected, but submanifolds and subvarieties can be reducible. 
	By Zariski closures, we mean \emph{analytic} Zariski closures unless otherwise specified.
	When the singular cohomology is with coefficients in $\C$, we use the notation $H^i(X) \cnec H^i(X,\C)$.

	If a group $H$ is trivial, we write $H = 1$.
	When a group $N$ acts on a space $V$, we denote by $N|_V$ the image of the canonical homomorphism $N \to \Aut (V)$. 
	For instance, if $G$ is a group acting on a complex manifold $X$, then $G|_{H^p(X, \Z)}$ is the image of $G$ under the action of the automorphism group $\Aut (X)$ on the cohomology group $H^p(X, \Z)$. For a normal subgroup $N_1 \tle N$,
	we set
	$(N/N_1)|_{V} = (N|_V)/(N_1|_V)$.
	
	We say that a property holds for {\it very general} (resp. {\it general}) parameters or points if it holds for all parameters or points outside a countable
	(resp. finite) union of proper closed analytic subvarieties of the space of parameters or points.
	
	Given a proper surjective morphism $f : X \to B$ between complex varieties, we denote by $X_b$ the fiber over $b \in B$ in the category of analytic spaces.
	We define $\Aut(f) \le \Aut(X)$ to be the subgroup of automorphisms of $X$ which descend to automorphisms of $B$ through $f$. We also define the relative automorphism group with respect to $f$ by
	$$\Aut (X/B) := \big\{ g \in \Aut (X)\,\, : \,\, f \circ g = f \big\}\, .$$
	This is a subgroup of $\Aut (f)$. We have $\Aut (X/B)|_{B} = \{ \Id_B\}$ and $\Aut (X/B)$ acts on each fiber $X_b$ of $f$.
	
	\ssec*{Acknowledgments}
	\hfill

	The authors are respectively supported by the MOE grant MOE-T2EP20120-0010;
	the Ministry of Education Yushan Young Scholar Fellowship (NTU-110VV006) 
	and the National Science and Technology Council (110-2628-M-002-006-, 111-2123-M-002-012-);
	a JSPS Grant-in-Aid (A) 20H00111 and an NCTS Scholar Program; and  ARF: A-8000020-00-00,
	A-8002487-00-00
	of NUS. 
	We would like to thank the referees for the suggestions to improve the paper, and 
	KIAS, National Center for Theoretical Sciences (NCTS) in Taipei, NUS and the University of Tokyo for the hospitality and support during the preparation of this paper.

	\section{Preliminary results} \label{s:unipot}

	\ssec{Group actions on  compact K\"ahler manifolds}
	\hfill
	
	For a compact K\"ahler manifold $X$, 
	recall that $\Aut(X)$ has at most countably many connected components,
	arising from the components of the Douady space~\cite{FujikiCount}.

	\begin{lem}\label{lem:basefix}
		Let $X$ be a compact K\"ahler manifold and $G \le \Aut(X)$ a zero entropy subgroup. Assume  $\Aut^0(X)$ is compact (e.g. when $X$ is non-uniruled, see Lemma~\ref{l:non-uni_aut}). If $\overline{G}$ is the Zariski closure of $G$ in $\Aut(X)$, then
		$\overline{G}$ has at most countably many subgroups of finite index.
	\end{lem}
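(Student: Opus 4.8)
The plan is to compare $\overline{G}$ with its identity component, which lies in $\Aut^0(X)$, and to exploit that the latter is a compact torus.

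First I would record the basic structure. Since $X$ is compact K\"ahler, $\Aut(X)$ is a complex Lie group with at most countably many connected components, and $T \cnec \Aut^0(X)$ is an open normal subgroup; being connected, compact and complex, $T$ is a complex torus. The analytic Zariski closure $\overline{G}$ of $G$ is a subgroup of $\Aut(X)$ — one checks in the usual way that $g\overline{G}=\overline{G}=\overline{G}g$ for $g\in G$ and that $\overline{G}^{-1}=\overline{G}$ — and it is closed in the Euclidean topology, hence an embedded (possibly disconnected) Lie subgroup. Its identity component $S \cnec \overline{G}^0$ is then a closed connected subgroup of $T$, so a subtorus; in particular $S$ is a divisible abelian group.

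Next I would reduce to the discrete quotient $\pi \cnec \overline{G}/S$. If $H\le \overline{G}$ has finite index, then $S\cap H$ has index at most $[\overline{G}:H]$ in $S$; but a divisible group has no proper finite-index subgroup, so $S\subseteq H$. Hence $H\mapsto H/S$ is an index-preserving bijection from the finite-index subgroups of $\overline{G}$ onto those of $\pi$, and it is enough to show $\pi$ has at most countably many finite-index subgroups. Since a finitely generated group has only finitely many subgroups of each given finite index, it suffices to prove that $\pi$ is finitely generated.

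To see that $\pi$ is finitely generated, note that each coset $gT$ is a connected component of $\Aut(X)$, so $GT=\bigcup_{g\in G}gT$ is open and closed, hence an analytic subset containing $G$; therefore $\overline{G}\subseteq GT$ and $\overline{G}T=GT$, which gives $\overline{G}/(\overline{G}\cap T)\cong GT/T\cong G/G_0$. As $\overline{G}\cap T$ is a closed subgroup of the torus $T$ with identity component $S$, the group $(\overline{G}\cap T)/S$ is finite, so $\pi$ is a finite extension of $G/G_0$, and it is enough to show $G/G_0$ is finitely generated. Here I would invoke \cite[Proposition~2.6]{DLOZI}: there is a finite-index subgroup $G'\le G$ for which $G'/G'_0$ acts faithfully on $\prod_{1\le p\le \dim X-1}H^{p,p}(X,\C)$ with image in a product of unipotent groups, so $G'/G'_0$ is nilpotent. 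Since this action is obtained by restricting the action on $L\cnec \big(\bigoplus_p H^{2p}(X,\Z)\big)/(\text{torsion})\cong\Z^N$ to a subspace of $L\otimes\C$, the action on $L$ is also faithful, so $G'/G'_0$ is isomorphic to a nilpotent subgroup of $\GL_N(\Z)$. By the classical theorem that solvable subgroups of $\GL_N(\Z)$ are polycyclic (Mal'cev), $G'/G'_0$ is polycyclic, in particular finitely generated; as $G'/G'_0$ has finite index in $G/G_0$, the latter is finitely generated, and then so is its finite extension $\pi$. This completes the argument.

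\textbf{Expected main obstacle.} The heart of the proof is the step producing a finitely generated $\pi$: this is the only place the zero-entropy hypothesis is really used, via the cohomological rigidity of \cite[Proposition~2.6]{DLOZI} together with the integrality of the action on cohomology and Mal'cev's theorem on solvable linear groups over $\Z$. The remaining ingredients — the torus structure of $\Aut^0(X)$, the divisibility reduction to $\pi$, and the identity $\overline{G}T=GT$ — are routine bookkeeping; the one point requiring a little care is checking that $\overline{G}$ is genuinely a closed Lie subgroup, so that ``identity component'' makes sense.
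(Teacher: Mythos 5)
Your argument is correct and follows essentially the same route as the paper's proof: both reduce the statement to finite generation of the component group of $\overline{G}$ — using divisibility of the identity component inside the torus $\Aut^0(X)$, compactness of $\Aut^0(X)$ to make $(\overline{G}\cap\Aut^0(X))/\overline{G}^0$ finite, and the cohomological result of \cite{DLOZI} combined with Mal'cev's theorem on solvable subgroups of $\GL_N(\Z)$ — and then conclude because finitely generated groups have only countably many finite-index subgroups. The only cosmetic difference is that the paper applies the \cite{DLOZI} lemma directly to $\overline{G}$ (noting it is again of zero entropy), whereas you apply it to $G$ and transfer the conclusion to $\overline{G}$ via the inclusion $\overline{G}\subseteq G\cdot\Aut^0(X)$.
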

	
	\begin{proof}
		Since $G \le \Aut(X)$ is a zero entropy subgroup, $\overline{G} \le \Aut(X)$ is also a zero entropy subgroup. By~\cite[Lemma 2.5]{DLOZI}, there exists a subgroup $G' \le \overline{G}$ of finite index such that
		$G'/(G' \cap \Aut^0(X))$ is a solvable subgroup of $\GL(H^2(X,\Z)/\torsion)$. It suffices to prove that $G'$ has at most countably many subgroups of finite index.
		
		Since $\overline{G}$ is a Lie group, so is $G'$. So finite-index subgroups of $G'$ are in one-to-one correspondence with finite-index subgroups of $G'/G'^0$ where $G'^0$ is the identity component of $G'$. As
		$G'/(G' \cap \Aut^0(X))$ is a solvable subgroup of $\GL(H^2(X,\Z)/\torsion)$, a theorem of Mal'cev implies that $G'/(G' \cap \Aut^0(X))$ is finitely generated~\cite[p.26, Corollary 1]{Polycicgp}. Since $G' \cap \Aut^0(X)$ is a Zariski closed subgroup of $\Aut^0(X)$ containing ${G}'^0$ as the identity component and since $\Aut^0(X)$ is compact, the quotient $(G' \cap \Aut^0(X))/G'^0$ is finite, so the surjective map $G'/G'^0 \to G'/(G' \cap \Aut^0(X))$ has finite kernel. Hence $G'/G'^0$ is finitely generated,
		and it follows that $G'/G'^0$ has countably many finite-index subgroups~\cite[p.128, Property 4]{Hall}.
	\end{proof}

	The following lemma is well-known,
	which is a consequence of the existence of
	canonical resolutions in~\cite[Theorem 13.2]{BM}.
	We omit the proof.
	
	\begin{lemma}  \label{l:resolution}
		Let $X$ and $Y$ be compact K\"ahler manifolds, $G$ a subgroup of $\Aut (X)$, and $f : X \dasharrow Y$ a dominant meromorphic map. Assume that $f$ is $G$-equivariant, in the sense that there is a group homomorphism $\rho : G \to \Aut (Y)$ such that $f \circ g = \rho(g) \circ f$ for all $g \in G$. 
		Then, there are a compact K\"ahler manifold $\widetilde X$ together with a biregular $G$-action
		and a bimeromorphic morphism
		$\nu : \widetilde X\to X$ such that  $f \circ \nu : \widetilde X \to Y$ is a $G$-equivariant surjective morphism. In particular, $\nu^{-1} \circ G \circ \nu$ is a subgroup of $\Aut(\widetilde X)$ and is isomorphic to $G$.
	\end{lemma}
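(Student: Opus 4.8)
The plan is to put $G$ on the graph of $f$ and then appeal to a \emph{canonical} (functorial) resolution of singularities; the only step that is not entirely routine is the equivariance of that resolution, everything else being formal manipulation with graphs and projections.

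First I would form $\Gamma_f \subseteq X \times Y$, the closure of the graph of $f$ (with its reduced, hence irreducible, structure, since $X$ is connected), together with the two projections $p \cnec \mathrm{pr}_X|_{\Gamma_f} \colon \Gamma_f \to X$ and $q \cnec \mathrm{pr}_Y|_{\Gamma_f} \colon \Gamma_f \to Y$. Here $p$ is a bimeromorphic morphism, $q$ is surjective (because $f$ is dominant and $\Gamma_f$ is compact), and $q = f \circ p$ as meromorphic maps. Let $G$ act on $X \times Y$ by $g \cdot (x,y) = (g(x), \rho(g)(y))$. On the locus of holomorphy of $f$ the $G$-equivariance hypothesis gives $g \cdot (x, f(x)) = (g(x), \rho(g)(f(x))) = (g(x), f(g(x)))$, so $g$ maps the graph into $\Gamma_f$; passing to closures, the $G$-action on $X \times Y$ preserves $\Gamma_f$. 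Thus $G$ acts biregularly on the (possibly singular) compact complex variety $\Gamma_f$, and $p$, $q$ are equivariant for the given actions of $G$ on $X$ and on $Y$.

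Next I would choose a canonical resolution $r \colon \widetilde X \to \Gamma_f$ as in~\cite[Theorem 13.2]{BM}: $\widetilde X$ is a compact complex manifold, $r$ is bimeromorphic, and the construction is functorial with respect to isomorphisms. Consequently each $g \in G$, viewed as an automorphism of $\Gamma_f$, lifts to an automorphism $\widetilde g$ of $\widetilde X$ with $r \circ \widetilde g = g \circ r$, and this lift is unique: if $\widetilde g_1,\widetilde g_2$ both lift $g$, then $\widetilde g_1 \circ \widetilde g_2^{-1}$ is an automorphism of $\widetilde X$ over $\Gamma_f$, which is the identity on the dense open subset where $r$ is an isomorphism, and hence everywhere by the identity principle on the connected manifold $\widetilde X$. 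Uniqueness forces $g \mapsto \widetilde g$ to be a group homomorphism $G \to \Aut(\widetilde X)$, and it is injective, since $\widetilde g = \id$ gives $g \circ r = r$, hence $g = \id$ on $\Gamma_f$ (as $r$ is surjective), hence $g = \id$ on $X$ (as $p$ is bimeromorphic). So $\widetilde X$ carries a faithful biregular $G$-action, giving an embedding $G \hookrightarrow \Aut(\widetilde X)$, and $\nu \cnec p \circ r \colon \widetilde X \to X$ is a $G$-equivariant bimeromorphic morphism.

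It remains to record the stated properties. From $q = f \circ p$ one gets that $f \circ \nu$ coincides with $q \circ r \colon \widetilde X \to Y$, which is a genuine morphism, is surjective (a composition of surjections), and is $G$-equivariant since $q$ and $r$ are. Moreover $\widetilde X$ is K\"ahler, being a compact complex manifold bimeromorphic to the compact K\"ahler manifold $X$ (Varouchas). The last sentence of the lemma is then just the embedding $G \hookrightarrow \Aut(\widetilde X)$ above, whose image is what is denoted $\nu^{-1} \circ G \circ \nu$. I expect the functoriality/equivariance of the resolution to be the only delicate point: it is essential to use a \emph{canonical} resolution, as an arbitrary resolution of $\Gamma_f$ need not admit a $G$-action; on the other hand no hypothesis on $G$ (finiteness, algebraicity, $\dots$) is needed, because the unique lifts automatically compose into an action of the whole group.
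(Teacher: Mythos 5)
Your construction---letting $G$ act on the closure of the graph $\Gamma_f \subset X\times Y$ and then resolving $\Gamma_f$ by the canonical, functorial resolution of \cite[Theorem 13.2]{BM}, with uniqueness of lifts forcing the lifted maps to form a group action---is exactly the argument the paper has in mind: it omits the proof and cites precisely this canonical resolution. The graph-theoretic part (irreducibility of $\Gamma_f$, $G$-invariance of $\Gamma_f$, equivariance of the projections $p$ and $q$, surjectivity of $q$, $q=f\circ p$), the lifting of each $g\in G$ with its uniqueness, and the injectivity of $G\to\Aut(\widetilde X)$ are all correct.

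The one step that fails as you stated it is the K\"ahlerness of $\widetilde X$: a compact complex manifold bimeromorphic to a compact K\"ahler manifold need \emph{not} be K\"ahler---it only lies in Fujiki's class $\mathcal{C}$ (Hironaka's example of a non-K\"ahler Moishezon threefold bimeromorphic to $\mathbb{P}^3$ shows this, and Varouchas' results give class $\mathcal{C}$, not K\"ahlerness). Since the lemma asserts that $\widetilde X$ is compact K\"ahler, this point needs a correct justification, and it is supplied by the very resolution you use: the Bierstone--Milman resolution is a finite composition of blow-ups along smooth centers, so one can realize it inside successive blow-ups of the ambient compact K\"ahler manifold $X\times Y$ along the same smooth centers; blow-ups of compact K\"ahler manifolds along smooth centers are K\"ahler, and $\widetilde X$ is then a smooth closed subvariety (the final strict transform of $\Gamma_f$) of such an iterated blow-up, hence compact K\"ahler. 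Equivalently, $r\colon\widetilde X\to\Gamma_f$ is a projective morphism onto a subspace of the K\"ahler manifold $X\times Y$, and adding a small multiple of a relative $\mathcal{O}(1)$-class to the pullback of a K\"ahler form yields a K\"ahler form on $\widetilde X$. With that replacement your proof is complete and coincides with the intended one.
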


	The following lemma allows us to work with suitable bimeromorphically equivalent models.
	It is also very useful when treating the case of singular varieties.
	
	\begin{lemma}\label{l:length_gen_fin_map}
		Let $\pi: X_1 \ratmap X_2$ be a dominant meromorphic map
		between compact K\"ahler manifolds of the same dimension. Let $G$ be a group acting on both $X_1$ and $X_2$ biholomorphically and $\pi$-equivariantly. Suppose $G|_{X_1}$ (or equivalently $G|_{X_2}$) is zero entropy subgroup. Then, replacing $G$ by a finite-index subgroup, we have
		$$(G|_{X_1})/(G|_{X_1})_0 \cong (G|_{X_2})/(G|_{X_2})_0\, .$$
		In particular, 
		$$\ell_{\ess}(G|_{X_1}, X_1) = \ell_{\ess}(G|_{X_2}, X_2)$$
		and 
		$$c_{\ess}(G|_{X_1}, X_1) = c_{\ess}(G|_{X_2}, X_2).$$
	\end{lemma}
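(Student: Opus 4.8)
The plan is to reduce to a situation where $\pi$ is an honest morphism and then analyze the induced map on cohomology. First I would invoke Lemma~\ref{l:resolution} (applied to both projections of a resolved graph of $\pi$) to replace $X_1$ and $X_2$ by a common $G$-equivariant model: concretely, take a compact K\"ahler manifold $\widetilde X$ with biregular $G$-action together with $G$-equivariant bimeromorphic morphisms $\widetilde X \to X_1$ and $\widetilde X \to X_2$. Since the statement we want to prove is about $\ell_\ess$ and $c_\ess$, which by definition only depend on $G/G_0$ acting on the $H^{p,p}$'s, and since bimeromorphic morphisms between smooth compact K\"ahler manifolds induce $G$-equivariant injections on $H^{p,p}$ that are compatible with the group actions, it suffices to compare $\widetilde X$ with each of $X_1$, $X_2$ separately. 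So I may assume from the start that $\pi : X_1 \to X_2$ is a $G$-equivariant surjective morphism between smooth compact K\"ahler manifolds of the same dimension, i.e.\ a (surjective, hence) generically finite morphism.

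Next I would use that $\pi^* : H^{p,p}(X_2) \to H^{p,p}(X_1)$ is injective for all $p$ and $G$-equivariant: injectivity is the standard fact that $\pi_* \pi^* = (\deg \pi)\cdot \id$ on cohomology for a generically finite proper surjective morphism, and $G$-equivariance is immediate from $\pi \circ g = g \circ \pi$. This gives an injection $(G|_{X_2})/(G|_{X_2})_0 \hookrightarrow \GL(\bigoplus_p H^{p,p}(X_2)) \hookrightarrow \GL(\bigoplus_p H^{p,p}(X_1))$ whose image is contained in the image of $(G|_{X_1})/(G|_{X_1})_0$ acting on $\bigoplus_p H^{p,p}(X_1)$ (after passing to the finite-index subgroup $G'$ from~\cite[Proposition 2.6]{DLOZI} that acts faithfully and unipotently on the $H^{p,p}$'s, applied on the $X_1$ side). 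Conversely, I need to produce a map the other way: the point is that an element $g \in G$ acting trivially on $H^\bullet(X_2)$ need not act trivially on $H^\bullet(X_1)$ a priori, so the two faithful quotients could differ. To handle this I would use that $\pi$ being generically finite forces the image $\pi(X_1) = X_2$ and, more importantly, that the graph $\Gamma_\pi \subset X_1 \times X_2$ together with the correspondence it defines identifies a piece of $H^\bullet(X_1)$ containing $\pi^* H^\bullet(X_2)$ with $H^\bullet(X_2)$ as $G$-modules; equivalently, after replacing $G$ by a finite-index subgroup that is unipotent on all $H^{p,p}(X_1)$ and on all $H^{p,p}(X_2)$ simultaneously, the kernel of $G \to \GL(\bigoplus_p H^{p,p}(X_1))$ and the kernel of $G \to \GL(\bigoplus_p H^{p,p}(X_2))$ coincide. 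I expect this comparison of kernels to be the heart of the argument, and it should follow from the fact that $\pi$ induces an isomorphism $H^{p,p}(X_2) \xrightarrow{\sim} $ (the $\pi^*$-image) which is a direct summand as a Hodge structure, split $G$-equivariantly by $\tfrac{1}{\deg\pi}\pi_*$, so any $g$ acting trivially on this summand acts trivially on a subspace isomorphic to all of $H^{p,p}(X_2)$; and conversely if $g|_{X_1}$ is trivial on $H^\bullet(X_1)$ then, since $\pi$ is dominant and $X_2$ is the image, $g|_{X_2}$ is determined on a dense open set hence trivial on cohomology too.

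Once the kernels are identified, I get $G$-equivariant identifications $(G|_{X_1})/(G|_{X_1})_0 \cong G/(\ker \text{ on }H^\bullet(X_1))/(\text{finite}) \cong G/(\ker\text{ on }H^\bullet(X_2))/(\text{finite}) \cong (G|_{X_2})/(G|_{X_2})_0$ after passing to a common finite-index subgroup of $G$ — here I must be slightly careful that $(G|_{X_i})_0 = G|_{X_i} \cap \Aut^0(X_i)$ matches the cohomological kernel only up to finite index, which is exactly the content of~\cite[Proposition 2.6]{DLOZI} and is why the statement allows replacing $G$ by a finite-index subgroup. The equalities $\ell_\ess(G|_{X_1},X_1) = \ell_\ess(G|_{X_2},X_2)$ and $c_\ess(G|_{X_1},X_1) = c_\ess(G|_{X_2},X_2)$ then follow immediately, since derived length and nilpotency class are isomorphism invariants of a group and, by the cited proposition, are insensitive to passing to finite-index subgroups in this setting. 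The main obstacle, as indicated, is the clean identification of the two cohomological kernels; everything else is bookkeeping with the $G$-equivariant splitting $\pi_*\pi^* = (\deg\pi)\id$ and the reduction via Lemma~\ref{l:resolution}.
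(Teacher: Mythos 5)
Your reduction to the case where $\pi : X_1 \to X_2$ is a $G$-equivariant generically finite surjective morphism (via Lemma~\ref{l:resolution}) matches the paper, and your treatment of one direction is fine: if $g|_{X_1}$ acts trivially on $H^{1,1}(X_1,\R)$, then $\pi^*(g|_{X_2}^*c)=g|_{X_1}^*\pi^*c=\pi^*c$ and injectivity of $\pi^*$ (equivalently $\pi_*\pi^*=(\deg\pi)\Id$) gives triviality of $g|_{X_2}$ on $H^{1,1}(X_2,\R)$ — the paper argues exactly this way. The genuine gap is in the other direction, which you yourself flag as ``the heart of the argument'' and then dispose of by saying that $\pi^*H^{p,p}(X_2)$ is a $G$-equivariantly split direct summand, so a $g$ trivial on it is trivial. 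That only shows $g|_{X_1}$ fixes the subspace $\pi^*H^{p,p}(X_2)$, which is in general a \emph{proper} subspace of $H^{p,p}(X_1)$ (think of exceptional classes of the resolution); triviality on a split $G$-stable summand says nothing about the complementary classes, and no amount of linear-algebra bookkeeping with $\tfrac{1}{\deg\pi}\pi_*$ closes this. Indeed the claim ``the two cohomological kernels coincide'' is false before passing to a finite-index subgroup: take $X_2$ a $2$-torus, $g_2$ translation by a $2$-torsion point $\tau$, and $X_1$ the blow-up of $\{0,\tau\}$; the lift $g_1$ swaps the two exceptional curves, hence acts nontrivially on $H^{1,1}(X_1)$ while $g_2$ is trivial on $H^{1,1}(X_2)$. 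So the identification of kernels cannot be formal; it needs a geometric/dynamical input plus the finite-index replacement.

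The paper supplies exactly this missing input. After using~\cite[Lemma 2.5]{DLOZI} to replace $G$ by a finite-index subgroup so that $(G|_{X_i})/(G|_{X_i})_0 \to G|_{H^{1,1}(X_i,\R)}$ is an isomorphism with unipotent (hence torsion-free) image for $i=1,2$, it argues: if $g_2$ is trivial on $H^{1,1}(X_2,\R)$, then $g_1^*\pi^*\{\omega\}=\pi^*\{\omega\}$ for a K\"ahler form $\omega$ on $X_2$; since $\pi$ is generically finite, $\pi^*\{\omega\}$ is a \emph{big} class, and the generalized Fujiki--Lieberman theorem~\cite[Corollary 2.2]{DHZ} then forces a power of $g_1$ to lie in $(G|_{X_1})_0$; torsion-freeness of $(G|_{X_1})/(G|_{X_1})_0$ upgrades this to $g_1\in(G|_{X_1})_0$, hence $g_1$ is trivial on $H^{1,1}(X_1,\R)$. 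This use of bigness of $\pi^*\{\omega\}$ together with~\cite{DHZ} (and the torsion-freeness arranged by the finite-index passage) is the step your proposal lacks; without it, or an equivalent substitute, your argument does not go through. (Minor additional point: the parenthetical equivalence of $G|_{X_1}$ and $G|_{X_2}$ being of zero entropy is not automatic either; the paper cites~\cite[Theorem 1.1]{DNT} for it.)
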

	
	\begin{proof}
		The equivalence of $G|_{X_i}$ being of zero entropy for $i = 1$ or $2$ is by
		\cite[Theorem 1.1]{DNT}.
		Replacing $\pi : X_1 \ratmap X_2$ by a $G$-equivariant resolution as in Lemma \ref{l:resolution},
		we may assume that $\pi$ is a morphism.
		By~\cite[Lemma 2.5]{DLOZI},
		replacing $G$ by a finite-index subgroup, we may assume that
		the natural map $(G|_{X_i})/(G|_{X_i})_0 \to G|_{H^{1,1}(X_i, \R)}$ is an isomorphism for $i=1,2$ and the action of $G$ on $H^{1,1}(X_i, \R)$ is unipotent. So $(G|_{X_i})/(G|_{X_i})_0$ contains no non-trivial element of finite order.
		
		It is enough to show that the group homomorphisms $G\to G|_{H^{1,1}(X_i, \R)}$ have the same kernel
		for $i = 1, 2$.
		Consider an element $g$ of $G$ and denote by $g_i$ its action as an automorphism on $X_i$ for $i=1,2$. These automorphisms are related by the identity
		$$\pi\circ g_1=g_2\circ\pi\,.$$
		We only need to check that $g_1$ acts trivially on $H^{1,1}(X_1,\R)$ if and only if $g_2$ acts trivially on $H^{1,1}(X_2,\R)$.
		
		Fix a K\"ahler form $\omega$ on $X_2$.
		It follows from the equality above that
		$$g_1^*\pi^*(\omega)=\pi^*g_2^*(\omega)\,.$$
		Note that $\pi^*(\omega)$ and  $\pi^*g_2^*(\omega)$ are smooth positive closed $(1,1)$-forms as $\pi$ is a morphism. We deduce from the last identity that
		$$g_1^*\pi^*\{\omega\}=\pi^*g_2^*\{\omega\}\, ,$$
		where $\{\omega\}$ denotes the class of $\omega$ in $H^{1,1}(X_2,\R)$.
		Assume that the action of $g_2$ on $H^{1,1}(X_2,\R)$ is trivial. We have
		$$g_1^*\pi^*\{\omega\}=\pi^*\{\omega\}\,.$$
		Since $\omega$ is  K\"ahler, the class $\pi^*\{\omega\}$ is big, see e.g. \cite[p.\,1253]{DP} for this fact and the definition of big class in the K\"ahler setting. By \cite[Corollary 2.2]{DHZ}, a power of $g_1$ belongs to $(G|_{X_1})_0$. It follows that  $g_1$ belongs to $(G|_{X_1})_0$ and hence acts trivially on $H^{1,1}(X_1,\R)$ because $(G|_{X_i})/(G|_{X_i})_0$ contains no non-trivial element of finite order.
		
		Assume now that the action of $g_1$ on $H^{1,1}(X_1,\R)$ is trivial. We need to show a similar property for $g_2$.
		For any class $c \in H^{1,1}(X_2,\R)$,
		we have
		$$\pi^*(c) = g_1^*(\pi^*(c)) = \pi^* (g_2^*(c))$$
		for the same reason as above.
		Applying $\pi_*$ to the above equality and noting $\pi_* \pi^* = (\deg \pi) \, \Id$, we get
		$g_2^*(c) = c$. Hence, $g_2$ acts trivially on $H^{1,1}(X_2, \R)$.
	\end{proof}
	
	We close this section with the following useful result, already cited in Lemma~\ref{lem:basefix}.
	
	\begin{lemma}\label{l:non-uni_aut}
		Let $X$ be a compact K\"ahler manifold. Suppose $X$ is not uniruled (e.g. $X$ has $\gk(X) \ge 0$). 
		Then $\Aut^0(X)$ is compact or equivalently, $\Aut^0(X)$ is a complex torus. 
	\end{lemma}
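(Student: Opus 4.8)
The plan is to reduce the statement to the known structure theory of $\Aut^0(X)$ for a compact K\"ahler manifold, and to the fact that non-uniruledness forces $\Aut^0(X)$ to have no rational or linear part. First I would recall that for a compact K\"ahler manifold $X$, the connected group $\Aut^0(X)$ is a complex Lie group acting holomorphically on $X$; by a theorem of Fujiki and Lieberman, $\Aut^0(X)$ admits a structure theorem \`a la Chevalley, namely there is an exact sequence
\begin{equation*}
1 \longrightarrow L \longrightarrow \Aut^0(X) \longrightarrow T \longrightarrow 1,
\end{equation*}
where $T$ is a complex torus (the image of $\Aut^0(X)$ in $\Aut(\Alb(X))$ via the Albanese functoriality) and $L$ is a connected linear algebraic group. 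It is a standard fact (see e.g. Fujiki, or Lieberman's work on holomorphic automorphism groups of compact K\"ahler manifolds) that ``$\Aut^0(X)$ is compact'' is equivalent to ``$\Aut^0(X)$ is a complex torus'', because a compact connected complex Lie group is automatically a complex torus, and conversely the linear part $L$ is compact only if it is trivial (a positive-dimensional connected linear algebraic group over $\C$ is never compact). So the equivalence asserted in the statement is formal once we know the Chevalley-type decomposition, and the real content is that $L = 1$ when $X$ is not uniruled.

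Next I would prove $L = 1$ under the non-uniruledness hypothesis. Suppose for contradiction that $L$ is a positive-dimensional connected linear algebraic group acting faithfully and holomorphically on $X$. Since $L$ is linear and connected of positive dimension, it contains a copy of $\G_a = (\C,+)$ or of $\G_m = \C^*$; in either case $L$ contains a one-parameter algebraic subgroup, and hence $X$ carries a nontrivial holomorphic vector field $v$ coming from the $L$-action. The key geometric input is that a compact K\"ahler manifold admitting a holomorphic action of a positive-dimensional linear algebraic group is uniruled: indeed, the general orbit closure of a $\G_m$- or $\G_a$-action is a rational curve (or is covered by rational curves), and these orbits sweep out $X$ up to the action of the torus quotient, so through a general point of $X$ there passes a rational curve. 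This contradicts the assumption that $X$ is not uniruled. Hence $L = 1$ and $\Aut^0(X)$ maps isomorphically onto the complex torus $T$, so $\Aut^0(X)$ is itself a complex torus, in particular compact.

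Finally, for the parenthetical remark that $\gk(X) \ge 0$ implies $X$ is not uniruled: a uniruled compact K\"ahler manifold is covered by rational curves, and restricting the pluricanonical forms to a general such curve $\cong \P^1$ shows $H^0(X, \omega_X^{\otimes m}) = 0$ for all $m \ge 1$ (as $\omega_{\P^1}^{\otimes m}$ has no sections), whence $\gk(X) = -\infty$; contrapositively $\gk(X) \ge 0$ forces non-uniruledness. The main obstacle in writing this out carefully is the geometric step that a faithful action of a positive-dimensional linear algebraic group produces rational curves through a general point — one should either cite the appropriate statement (e.g. from the work of Fujiki, Lieberman, or the uniruledness criteria via $\G_a$- and $\G_m$-actions) or argue via the orbit structure; everything else is the standard structure theory of $\Aut^0$ of a compact K\"ahler manifold together with the elementary observation that positive-dimensional complex linear algebraic groups are never compact.
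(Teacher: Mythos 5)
Your proposal is essentially sound, and it is worth noting that the paper does not prove this lemma at all: it simply refers to Fujiki (Proposition 5.10, Corollary 5.11) and Lieberman, with Matsumura cited for the algebraic version. What you have written is in effect a reconstruction of the argument behind those references: the Fujiki--Lieberman exact sequence $1 \to L \to \Aut^0(X) \to T \to 1$ with $T$ a subtorus of $\Alb(X)$ and $L$ a connected linear algebraic group, the formal equivalence ``compact $\Leftrightarrow$ complex torus'' (a compact connected complex Lie group is a torus, and a positive-dimensional connected linear algebraic group over $\C$ is never compact), and the reduction of the whole lemma to showing $L=1$ when $X$ is not uniruled. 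The one step you should be careful to justify, and which you correctly flag as the main obstacle, is the passage from a $\G_a$- or $\G_m$-subgroup of $L$ to rational curves through a general point: a mere holomorphic one-parameter group does not suffice (orbit closures of a generic vector field on a complex torus are not rational curves). What saves the argument is precisely that $L$ is the \emph{linear algebraic} part in the Fujiki--Lieberman structure theory, so its action on $X$ is meromorphic in Fujiki's sense; hence each orbit map $\G_a \to X$ or $\G_m \to X$ extends to a meromorphic map $\P^1 \dashrightarrow X$, whose image is a point or a rational curve, and nontriviality of the action then gives rational curves through general points, i.e.\ uniruledness. With that point either cited or spelled out, your argument is complete and matches the proof in the cited literature; your final paragraph on $\gk(X)\ge 0 \Rightarrow$ non-uniruled is also fine.
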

	
	For a proof of the above lemma, we refer the reader to \cite[Proposition 5.10, Corollary  5.11 and its proof]{Fujiki} or \cite{Lieberman}.
	An algebraic version of this lemma, for non-ruled smooth projective varieties,
	follows from e.g.~\cite[Corollary 1]{Mat}

	\ssec{A connectedness lemma}
	\hfill

	Let $X$ be a compact K\"ahler manifold  such that $\Aut^0(X)$ is compact.
	Let $G \le \Aut(X)$ be a zero entropy subgroup.

	\begin{lem}\label{lem-conn}
		Let $m \in \Z_{\ge 0}$.
		Up to replacing $G$ by a finite index subgroup of it,
		$\Gamma_mG \le \Aut^0(X)$ if and only if its Zariski closure $\ol{\Gamma_mG}$ in $\Aut(X)$ is connected.
		Similarly, $G^{(m)} \le \Aut^0(X)$ if and only if $\ol{G^{(m)}}$ is connected.
	\end{lem}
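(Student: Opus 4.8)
The plan is to reduce the two statements to a single dichotomy for a subgroup $H \le \Aut(X)$ whose Zariski closure $\ol H$ has only countably many finite-index subgroups (which is exactly the conclusion of Lemma~\ref{lem:basefix}, applicable since $\Aut^0(X)$ is compact and $G$ is zero entropy); then apply that dichotomy to $H = \Gamma_m G$ and $H = G^{(m)}$. The ``only if'' direction is the substantive one. Suppose $\ol H \le \Aut^0(X)$ is \emph{not} connected, i.e. $\pi_0(\ol H) = \ol H / \ol H{}^0$ is a non-trivial group; since $\ol H$ is a real Lie group with only countably many finite-index subgroups, $\pi_0(\ol H)$ is a countable group in which the intersection of all finite-index subgroups is still of finite index — equivalently, passing to that intersection, we may assume $\pi_0(\ol H)$ has no proper finite-index subgroup, yet is non-trivial and, being a subquotient of the discrete group $\ol H / \Aut^0(X)$-part\ldots. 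Here I must be careful: $\ol H$ may have positive-dimensional intersection with $\Aut^0(X)$, and $\pi_0$ can still be complicated. The cleaner route: after replacing $G$ by a finite-index subgroup as permitted, arrange (using~\cite[Lemma 2.5]{DLOZI} and Lemma~\ref{lem:basefix}) that $G/G_0$ is torsion-free and that $\ol H$ has the property that every finite-index subgroup containing $\ol H{}^0$ equals $\ol H$; then $\pi_0(\ol H)$, if non-trivial, must be infinite and torsion-free, so it cannot be ``contained in $\Aut^0(X)$'' in a way compatible with $H \le \Aut^0(X)$ — because $H \le \Aut^0(X)$ forces $\ol H \le \Aut^0(X)$ (Zariski closure of a subset of the closed subgroup $\Aut^0(X)$), hence $\ol H{}^0 = \ol H \cap \Aut^0(X){}^0$ has finite index in $\ol H \cap \Aut^0(X) = \ol H$, contradicting that $\ol H$ has no proper finite-index subgroup over $\ol H{}^0$ unless $\ol H = \ol H{}^0$.

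In more detail the key steps are: (i) Note $H \le \Aut^0(X)$ implies $\ol H \le \Aut^0(X)$ since $\Aut^0(X)$ is Zariski closed in $\Aut(X)$; conversely $\ol H$ connected and $\ol H \le \Aut(X)$ containing $H$ forces $\ol H \le \Aut^0(X)$ because $\ol H$ is a connected subset of $\Aut(X)$ meeting the identity component, hence $H \subseteq \ol H \subseteq \Aut^0(X)$. This already gives one implication of each equivalence \emph{without} replacing $G$. (ii) For the remaining implication ``$\ol H$ connected $\Rightarrow$ $H \le \Aut^0(X)$'': $\ol H$ connected forces $\ol H \subseteq \Aut^0(X)$ by the previous sentence, so $H \subseteq \ol H \subseteq \Aut^0(X)$ — done, again with no replacement needed. (iii) Thus the only direction genuinely requiring a finite-index replacement is ``$H \le \Aut^0(X)$ $\Rightarrow$ $\ol H$ connected'': here $\ol H \le \Aut^0(X)$, and $\Aut^0(X)$ being a \emph{compact} connected complex Lie group (a complex torus), its closed subgroups are compact, so $\ol H$ is a compact real Lie group; its component group $\pi_0(\ol H)$ is then finite, and replacing $G$ by the finite-index subgroup $G \cap \nu^{-1}(\ol H{}^0)$-type preimage makes $\ol H$ connected. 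One must check that this replacement of $G$ is uniform, i.e. independent of $m$ up to finitely many steps — but the lemma only claims ``up to replacing $G$,'' and we can take the (finite) common refinement since $\ol{\Gamma_m G}$ and $\ol{G^{(m)}}$ stabilize as $m$ grows past the relevant length.

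The main obstacle I anticipate is organizing the finite-index replacement of $G$ so that it simultaneously works for the subgroup in question and is compatible with the torsion-freeness of $G/G_0$ arranged earlier via~\cite[Lemma 2.5]{DLOZI}: concretely, after passing to $G'$ with $G'/G'_0$ torsion-free and unipotent on $H^{1,1}$, the intersection $\Gamma_m G' \cap \Aut^0(X)$ being an extension of a subgroup of the torsion-free $G'/G'_0$ by the compact group $\Aut^0(X)$ means its Zariski closure's component group injects into a torsion-free group, hence is trivial once it is finite — and finiteness comes from compactness of $\Aut^0(X)$. So the real content is: \emph{a subgroup of a compact complex torus whose discrete part is torsion-free has connected Zariski closure.} Spelling this out, together with the observation that finitely many values of $m$ suffice (beyond the derived length / nilpotency class of $\ol G$ the series are constant), completes the argument; the role of Lemma~\ref{lem:basefix} is to guarantee we do not need uncountably many replacements and that the relevant finite-index subgroup of $G$ genuinely exists inside the Lie group $\ol G$.
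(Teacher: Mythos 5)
There is a genuine gap, and it sits exactly where the paper has to work hardest. Your reduction of the easy direction is fine: if $\ol{H}$ is connected then it lies in $\Aut^0(X)$, hence so does $H$, with no replacement needed. But for the hard direction you treat the replacement of $G$ by a finite-index subgroup as if it allowed you to replace $H=\Gamma_mG$ (or $G^{(m)}$) by a finite-index subgroup of $H$ of your choosing, e.g. by ``$G\cap\nu^{-1}(\ol{H}^{\,0})$-type preimage''. It does not: after passing to $G'\le_\fii G$, the relevant group is $\Gamma_mG'$, the $m$-th term of the lower central series of $G'$, which is generated by iterated commutators of elements of $G'$ and is in no way the intersection of $\Gamma_mG$ with a prescribed subgroup; there is no homomorphism from $G$ through which $\ol{\Gamma_mG}^{\,0}$ can be pulled back. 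Forcing the commutators of a finite-index subgroup into the identity component is precisely the content of the paper's group-theoretic Lemma~\ref{lem-sgcomm} (a permutation argument on the finitely many cosets of $\ol{\Gamma_mG}^{\,0}\cap\Gamma_mG$ in $\Gamma_mG$, legitimate because $\Aut^0(X)$ is compact so the closure has finitely many components), and even then one only gets $\Gamma_mG'\subset\ol{\Gamma_mG}^{\,0}$; the closure $\ol{\Gamma_mG'}$ could again be disconnected, which the paper rules out by first choosing $G$ so that the pair $(m(G),\ol{\Gamma_{m(G)}G})$ is lexicographically minimal among finite-index subgroups, using Noetherianity of the Zariski topology on the compact group $\Aut^0(X)$. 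Your proposal contains neither ingredient, nor any substitute for the iteration/termination issue.

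The specific mechanism you offer in its place is also incorrect. You argue that the component group of $\ol{\Gamma_mG'}$ ``injects into the torsion-free group $G'/G'_0$'' and is therefore trivial once finite. But in the situation at hand $\Gamma_mG'\subset\Aut^0(X)$, so $\Gamma_mG'\subset G'_0$ and its image in $G'/G'_0$ is trivial; torsion-freeness of $G'/G'_0$ (arranged via~\cite[Lemma 2.5]{DLOZI}) gives no information about $\pi_0(\ol{\Gamma_mG'})$. Indeed the obstruction to connectedness is exactly torsion inside the torus $\Aut^0(X)$: $\Gamma_mG$ consists of elements of $\Aut^0(X)$ and may well contain finite-order elements generating extra components of the closure, so your slogan ``a subgroup of a compact complex torus whose discrete part is torsion-free has connected Zariski closure'' begs the question — nothing guarantees that torsion-freeness. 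Likewise, Lemma~\ref{lem:basefix} (countability of finite-index subgroups of $\ol{G}$) plays no role in this lemma's proof and cannot supply the missing construction. To repair the argument you would essentially have to reproduce the paper's two steps: the coset-permutation lemma producing a normal finite-index $G'\tle G$ with $[G',\Gamma_{m-1}G]\subset\ol{\Gamma_mG}^{\,0}$, and the minimality argument showing that for a suitably chosen $G$ this forces $\ol{\Gamma_mG}$ itself to be connected (with the analogous inductive construction of the subgroups $H_i$ for the derived series).
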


	Before we prove Lemma~\ref{lem-conn},
	let us first prove some group-theoretic lemmas.
	
	\begin{lem}\label{lem-connaux}
		Let $G$ be a group and let $N \tle G$ be a normal subgroup.
		Let $H \le G$ be a subgroup. Suppose that there exist generators $\{h_i\}$ of $H$
		such that $[G,h_i] \subset N$ for all $i$, then $[G,H] \subset N$. 
	\end{lem}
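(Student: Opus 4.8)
The plan is to pass to the quotient group $\bar G := G/N$, in which the hypothesis becomes a centrality statement. Since $N \tle G$, the quotient map $\pi \colon G \to \bar G$ is well-defined. For any $g \in G$ and any generator $h_i$, the assumption $[g,h_i] \in N$ says precisely that $\pi(g)$ and $\pi(h_i)$ commute in $\bar G$. As $g$ ranges over all of $G$, this means $\pi(h_i)$ lies in the center $Z(\bar G)$.

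Next I would use that the image $\pi(H) = HN/N$ is generated by the elements $\pi(h_i)$, because the $h_i$ generate $H$. Since $Z(\bar G)$ is a subgroup of $\bar G$ containing every $\pi(h_i)$, it contains the subgroup generated by them, i.e. $\pi(H) \subseteq Z(\bar G)$.

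Finally, for arbitrary $g \in G$ and $h \in H$ we get $\pi([g,h]) = [\pi(g),\pi(h)] = 1$ since $\pi(h) \in Z(\bar G)$, hence $[g,h] \in N$. Because $[G,H]$ is by definition generated by such commutators and $N$ is a subgroup, this yields $[G,H] \subseteq N$, as desired.

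There is no genuine obstacle here; this is elementary group theory, and the only point worth recording is that $N$ must be normal in all of $G$ (not merely in $H$) for the quotient $G/N$ to make sense — which is part of the hypothesis. If one prefers to avoid quotients, the same conclusion follows by induction on the length of a word in the $h_i^{\pm 1}$ representing $h \in H$, using the commutator identities $[g,ab] = [g,a]\,\bigl(a[g,b]a^{-1}\bigr)$ and $[g,a^{-1}] = a^{-1}[g,a]^{-1}a$ together with the normality of $N$; the quotient formulation merely packages this induction cleanly.
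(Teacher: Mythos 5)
Your proof is correct. Your main route differs in packaging from the paper's: the paper works directly inside $G$, using the identity $[g,\gamma_1\gamma_2]=[g,\gamma_1]\,\gamma_1[g,\gamma_2]\gamma_1^{-1}$ together with normality of $N$ to run an induction on the word length of $h$ in the generators $h_i^{\pm1}$ (first checking $[G,h_i^{-1}]\subset N$). You instead pass to $\bar G=G/N$ and observe that the hypothesis says each $\pi(h_i)$ is central, so $\pi(H)\le Z(\bar G)$ because the center is a subgroup containing the generators; the conclusion then drops out with no explicit induction. The two arguments are logically equivalent — both use normality of $N$ in all of $G$ in an essential way (the paper to keep the conjugate $\gamma_1[g,\gamma_2]\gamma_1^{-1}$ inside $N$, you to form the quotient) — but your formulation delegates the word-length induction to the standard fact that a subgroup generated by central elements is central, which is arguably cleaner. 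You even record the paper's induction as your fallback alternative, and your commutator identities there are correct. Nothing is missing.
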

	\begin{proof}
		This follows from
		$$[g,\gamma_1\gamma_2] = [g,\gamma_1] \gamma_1[g,\gamma_2]\gamma_1^{-1},$$
		showing first that $[G,h^{-1}_i] \subset N$, then $[G,h] \subset N$ for any $h \in H$ by
		induction on the word length of $h$.
	\end{proof}
	
	\begin{lem}\label{lem-sgcomm}
		Let $G$ be a group and $H \le G$ a subgroup.
		Let $K \le [G,H]$ be a subgroup of finite index.
		We assume the following:
		\begin{enumerate}
			\item $K$ is normal in $G$.
			\item  $H$ is generated by a finite number of elements $h_1,\ldots, h_M$
			together with a subgroup $H' \le H$	such that $[G,H'] \subset K$.
		\end{enumerate} 
		Then there exists a  normal subgroup $G' \tle G$ of finite index such that
		$$[G',H] \subset K.$$
	\end{lem}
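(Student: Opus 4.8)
The plan is to build $G'$ step by step, eliminating one generator $h_j$ at a time while keeping the index finite at each stage. The key observation is the commutator identity already used in Lemma~\ref{lem-connaux}: for $g_1,g_2 \in G$ and $h \in H$,
$$[g_1 g_2, h] = g_1 [g_2,h] g_1^{-1} \cdot [g_1,h] \quad\text{and}\quad [g,h_1 h_2] = [g,h_1]\cdot h_1[g,h_2]h_1^{-1}.$$
Since $K$ is normal in $G$, the quotient map $\pi \colon [G,H] \to [G,H]/K$ lands in a finite group. For each fixed $j$, consider the map $G \to [G,H]/K$, $g \mapsto \pi([g,h_j])$. Using the first identity above, one checks that this map, while not a homomorphism, factors through a finite quotient of $G$: indeed $[g_1 g_2, h_j]$ and $[g_1,h_j][g_2,h_j]$ differ by a conjugate, and modulo the normal subgroup $K$ the ambiguity is controlled by the (finite) image of $G$ in $\Aut([G,H]/K)$ together with the finite set $[G,H]/K$ itself. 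Concretely, the stabilizer in $G$ of the element $\pi([e,h_j])$ under the relevant action, intersected over the finitely many $h_j$, gives a finite-index subgroup; more robustly, one shows that $G_j := \{ g \in G : [g, h_j] \in K\}$ contains a finite-index subgroup of $G$.

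First I would make this precise: because $[G,H]/K$ is finite and $K \trianglelefteq G$, the conjugation action of $G$ on $[G,H]/K$ has finite image, so there is a normal finite-index subgroup $G_0 \trianglelefteq G$ acting trivially on $[G,H]/K$. On $G_0$, the first commutator identity gives $\pi([g_1 g_2, h_j]) = \pi([g_1,h_j]) \cdot \pi([g_2,h_j])$ for all $g_1, g_2 \in G_0$, so $g \mapsto \pi([g,h_j])$ restricted to $G_0$ is a genuine homomorphism into the finite group $[G,H]/K$. Its kernel $G_0^{(j)}$ has finite index in $G_0$, hence in $G$. Set $G'' := \bigcap_{j=1}^{M} G_0^{(j)}$, a finite-index subgroup with $[G'', h_j] \subset K$ for every $j=1,\dots,M$; we may also intersect with $G_0$ so that $G''$ acts trivially on $[G,H]/K$, and by passing to the normal core we may assume $G'' \trianglelefteq G$ is normal of finite index.

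Next I would assemble the conclusion. By hypothesis (2), $H$ is generated by $h_1, \dots, h_M$ together with $H'$, and $[G,H'] \subset K$, so in particular $[G'', H'] \subset K$. Now I want $[G'', H] \subset K$. By Lemma~\ref{lem-connaux} applied with $G''$ in place of $G$, $K$ in place of $N$, and $H$ with its generating set $\{h_1,\dots,h_M\} \cup (\text{generators of } H')$: we have just shown $[G'', h_j] \subset K$ for each $j$, and $[G'', h'] \subset [G,H'] \subset K$ for $h' \in H'$; hence $[G'', H] \subset K$. Finally, to get a subgroup normal in $G$ (as the statement requests) I take $G' := G''$, which we already arranged to be normal of finite index in $G$; this $G'$ satisfies $[G', H] \subset K$, as required. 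The main obstacle is the first step — verifying that $g \mapsto \pi([g,h_j])$ becomes multiplicative after passing to the finite-index subgroup $G_0$ on which $G$ acts trivially on $[G,H]/K$ — since the naive map is only a crossed homomorphism; once that is in hand, the rest is a routine application of Lemma~\ref{lem-connaux} and finite-intersection of finite-index subgroups.
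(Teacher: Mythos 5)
Your argument is correct, and it implements the same basic idea as the paper's proof by a slightly different mechanism. The paper works directly with the finite coset decomposition $[G,H]=\bigsqcup_{j}K\cdot r_j$: for each generator $h_i$, conjugation by $g\in G$ permutes the finitely many sets $K r_jh_i$, giving a permutation action $\gamma_i\colon G\to\mathrm{Sym}(\{0,\dots,N\})$, and $G'=\bigcap_i\ker\gamma_i$ works, since evaluating at the trivial coset yields $[G',h_i]\subset K$. You instead first pass to the kernel $G_0$ of the conjugation action of $G$ on the finite group $[G,H]/K$ (using, as the paper does implicitly, that $[G,H]\tle G$), and then observe that $g\mapsto\pi([g,h_j])$ becomes multiplicative on $G_0$, so its vanishing locus has finite index; both proofs then conclude identically via Lemma~\ref{lem-connaux} and the hypothesis $[G,H']\subset K$. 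Two minor points, neither of which affects validity. First, from $[g_1g_2,h]=g_1[g_2,h]g_1^{-1}\cdot[g_1,h]$ and $g_1\in G_0$ you actually get $\pi([g_1g_2,h_j])=\pi([g_2,h_j])\,\pi([g_1,h_j])$, i.e.\ an anti-homomorphism rather than a homomorphism (they coincide only if $[G,H]/K$ is abelian); this is harmless because $g\mapsto\pi([g,h_j])^{-1}$ is then a genuine homomorphism into the finite group $[G,H]/K$ with the same preimage of the identity, so $G_0^{(j)}$ is still a finite-index subgroup of $G_0$. Second, invoking Lemma~\ref{lem-connaux} ``with $G''$ in place of $G$'' is slightly loose, since $H$ and $K$ need not lie in $G''$; what is really used (exactly as in the paper's own application) is that the induction in that lemma's proof only needs $K$ to be normal in $G$, hence normalized by $H$, together with $[G'',h]\subset K$ for each generator $h$ of $H$ — which you have established.
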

	
	\begin{proof}
		We decompose 
		$$[G,H] = \bigsqcup_{j = 0}^N  K \cdot r_j  \ \ \ \ \ (r_j \in [G,H])$$
		into a finite disjoint union of right cosets with $r_0 \in G$
		being the neutral element. 
		Let $g \in G$.
		For every $h_i$ and every $r_j$, noting that $[G, H]$ is normal in $G$, we have 
		$$g(r_jh_i)g^{-1} \in K \cdot r_{\gamma_{i}(g)(j)} h_i$$
		for some index $\gamma_{i}(g)(j)$. As $K$ is normal in $G$, we have
		$$g(K \cdot r_j h_i)g^{-1} = K \cdot g(r_jh_i)g^{-1} =  K \cdot r_{\gamma_{i}(g)(j)} h_i.$$
		The assignment $j \mapsto \gamma_i(g)(j)$ thus defines
		a group action
		$$\gamma_i : G \cto \Set{0,\ldots, N}.$$
		Let
		$G' \cnec \bigcap_{i = 1}^M \ker \gamma_i,$
		which is a finite index normal subgroup of $G$.
		We thus have
		$[G',h_i] \subset K$ for every $h_i$. 
		Since we also have
		$[G',H'] \subset K$ by assumption, and since $K$ is normal in $G$,
		it follows from Lemma~\ref{lem-connaux} that 
		$[G',H] \subset K$.
	\end{proof}

	\begin{proof}[Proof of Lemma~\ref{lem-conn}]
		
		We first prove Lemma~\ref{lem-conn} for $\Gamma_mG$. 
		
		Since $G \le \Aut(X)$ is a zero entropy subgroup,
		up to replacing $G$ by a finite index subgroup of it,
		we can assume by~\cite[Lemma 2.5]{DLOZI} that
		$$\rho : G \to \GL(H^2(X,\Z)/\torsion)$$
		defined by the $G$-action on $X$ satisfies $\ker(\rho) \subset \Aut^0(X)$ and $\Ima(\rho)$ is nilpotent.
		For every subgroup $G' \le G$, let $m(G') < \infty$ be the smallest integer $m$
		such that $\Gamma_mG' \subset \Aut^0(X)$ and let 
		$$\Gamma(G') \cnec \ol{\Gamma_{m(G')}G'} \subset \Aut^0(X).$$
		Note that  $G' \le G$ 
		implies $$\(m(G'), \Gamma(G')\) \le \(m(G),\Gamma(G)\)$$ 
		for the lexicographic order.
		Since $\Aut^0(X)$ is compact,
		the analytic Zariski topology of $\Aut^0(X)$ is Noetherian,
		so up to replacing $G$ by a finite index subgroup of it we can assume that
		$\(m(G),\Gamma(G)\)$ is minimal among the finite index subgroups of $G$.
		We prove Lemma~\ref{lem-conn} by showing that $\ol{\Gamma_mG}$ is connected for $m \cnec m(G)$.
		Incidentally, $m(G)$ is nothing but the essential nilpotency class $c_\ess(G,X)$
		under the above assumptions.
		
		Assume that $m = 0$. Let $\ol{G}^0$ be the identity component of $\ol{G}$.
		Then $G' \cnec G \cap \ol{G}^0$ 
		is a finite index subgroup of $G$ and $\ol{G'} = \ol{G}^0$ is connected.
		So $\ol{G}$ is connected by minimality of $\(m(G),\Gamma(G)\)$.
		Now assume that $m\ge 1$. 
		Since $\Aut^0(X)$ is compact, we have the following statement,
		which we will use several times.
		\begin{claim}\label{claim-ftcomp}
			Any analytic Zariski closed subgroup of $\Aut^0(X)$
			has only finitely many connected components.
		\end{claim}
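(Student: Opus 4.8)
The plan is to use directly that $\Aut^0(X)$ is compact. Under the standing assumption of this subsection (see also Lemma~\ref{l:non-uni_aut}), $\Aut^0(X)$ is a complex torus, hence a compact connected real Lie group. The claim will then follow from the general fact that a closed subgroup of a compact Lie group is a compact Lie group, and that a compact Lie group---being a compact smooth manifold---has only finitely many connected components.

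The key steps are as follows. First I would note that an analytic Zariski closed subset of $\Aut^0(X)$ is in particular an analytic subset of the compact complex manifold $\Aut^0(X)$, hence closed in the Euclidean topology and therefore compact. Consequently, an analytic Zariski closed subgroup $H \le \Aut^0(X)$ is a \emph{closed} subgroup of the compact Lie group $\Aut^0(X)$. Second, by Cartan's closed-subgroup theorem $H$ is an embedded Lie subgroup; being closed in a compact space, $H$ is itself a compact Lie group, so it is a compact smooth manifold, and thus has finitely many connected components. This proves Claim~\ref{claim-ftcomp}.

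If one wishes to bypass Cartan's theorem, there is an equally short hands-on argument: writing $\Aut^0(X) = V/\Lambda$ with $V$ a complex vector space and $\Lambda$ a lattice, the identity component $H^0$ of $H$ is a complex subtorus, and the quotient $\Aut^0(X)/H^0$ is again a complex torus, in particular compact; the image of $H$ in $\Aut^0(X)/H^0$ is then a discrete (as $H^0$ is the identity component of $H$) closed subgroup of a compact group, hence finite, so $H/H^0$ is finite. I do not foresee any genuine obstacle here; the only point worth a word of justification is that "analytic Zariski closed" implies Euclidean-closed, so that compactness of $\Aut^0(X)$ may be invoked, and this is immediate because analytic subsets of compact complex manifolds are compact.
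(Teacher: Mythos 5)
Your proof is correct and takes essentially the same route as the paper, which simply asserts the claim from the compactness of $\Aut^0(X)$ without further detail: your observation that an analytic Zariski closed subgroup is Euclidean-closed, hence a compact Lie subgroup by Cartan's closed-subgroup theorem, and therefore a compact manifold with finitely many (open) components, is exactly the intended justification. The alternative torus-quotient argument you sketch is also fine, so there is nothing to add.
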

		In particular, since $\ker(\rho) \cap \Gamma_{m-1}G \subset \Aut^0(X)$,
		its closure in $\Aut^0(X)$ has only finitely many connected components.
		As $\rho(\Gamma_{m-1}G)$ is finitely generated,  
		$H \cnec \Gamma_{m-1}G$ is thus generated by finitely many 
		$h_1,\ldots,h_M \in \Gamma_{m-1}G$
		together with a subgroup
		$H' \le \Gamma_{m-1}G$ such that $\ol{H'} \subset \Aut(X)$ is connected.
		Let $K \cnec \ol{\Gamma_mG}^0 \cap \Gamma_mG$;
		as $\Gamma_mG$ is a normal subgroup of $G$, so is $K$. 
		Since $\Gamma_mG \subset \Aut^0(X)$,
		the closure $\ol{\Gamma_mG}$ has only finitely many connected components by Claim~\ref{claim-ftcomp},
		so $K$ has finite index in $\Gamma_mG$.
		As 
		$$[G,H'] \subset [G,\ol{H'}] \subset \ol{\Gamma_mG}$$
		and $[G,\ol{H'}]$ is connected because $\ol{H'}$ is,
		we have $[G,H'] \subset \ol{\Gamma_mG}^0 \cap \Gamma_mG$.
		Thus $G$ and the subgroups $H \le G$, $K \tle [G,H]$ 
		satisfy the assumptions in Lemma~\ref{lem-sgcomm},
		so there exists a normal subgroup $G' \tle G$ of finite index
		such that 
		$$\Gamma_{m}G' \subset [G',\Gamma_{m-1}G] = 
		[G',H] \subset K \subset \ol{\Gamma_mG}^0.$$
		As $\(m(G),\Gamma(G)\)$ is minimal, we have
		$$ \ol{\Gamma_mG} \subset \ol{\Gamma_{m}G'} \subset \ol{\Gamma_mG}^0,$$
		hence $\ol{\Gamma_mG}$ is connected.
		
		The proof of the second statement is similar.
		For every subgroup $G' \le G$, 
		let $n(G') < \infty$ be the smallest integer $n$
		such that $G'^{(n)} \subset \Aut^0(X)$.
		Up to replacing $G$ by a finite index subgroup of it we can assume that
		$(n(G),\ol{G^{(n(G))}})$ is minimal for the lexicographic order among the finite index subgroups of $G$. 
		We can assume that $n \cnec n(G) \ge 1$.
		For every integer $0 \le i \le n$, we construct by
		decreasing induction a subgroup $H_i \le \ol{G^{(i)}}$ of finite index such that
		$H_i$ is normal in $\ol{G}$ and
		$[H_i,H_i] \subset H_{i+1}$ with $H_n = \ol{G^{(n)}}^0$.

		Suppose that the finite index subgroup
		$H_{i+1} \le \ol{G^{(i+1)}}$ is constructed.

		The same argument shows that  
		$H \cnec \ol{G^{(i)}}$ is generated by finitely many 
		$h_1,\ldots,h_M \in \ol{G^{(i)}}$
		together with the subgroup
		$H' \cnec \ol{G^{(i)}}^0$.
		We verify as before that 
		$[H,H'] \subset K$,
		so that we can apply
		Lemma~\ref{lem-sgcomm} and obtain  
		a normal subgroup $G' \tle \ol{ G^{(i)}}$ of finite index such that
		$$[G',H] \subset H_{i+1}$$
		Since $\ol{G^{(i)}}/\ol{G^{(i)}}^0$ is finitely generated,
		$\ol{G^{(i)}}$ has only finitely many normal subgroups of index $[\ol{G^{(i)}} : G']$,
		so the normal subgroup 
		$$H_i \cnec \bigcap_{g \in \ol{G}} gG'g^{-1} \tle \ol{G^{(i)}}$$
		has finite index as well. 
		By construction, $H_i$ is also normal in $\ol{G}$.
		We have
		$$[H_i,H_i] \subset [G',H] \subset K \subset H_{i+1}.$$
		As $(n(G),\ol{G^{(n(G))}})$ is minimal, we have
		$$\ol{G^{(n)}} \subset \ol{H_0^{(n)}} \subset \ol{H_n} \subset \ol{G^{(n)}}^0.$$
		hence $\ol{G^{(n)}}$ is connected.
	\end{proof}

	\ssec{Dynamical invariants on singular projective varieties}
	
	\hfill

	This part will only be needed in \S\ref{s:sing_proj}.
	We begin with the following lemma.
	
	\begin{lemma}\label{l:equiv_resol}
		Let $X$ be a normal projective variety and let $\sigma: \widehat{X} \to X$ be an $\Aut(X)$-equivariant resolution
		(see \cite[Theorem 1.0.3]{Wl}, Lemma \ref{l:resolution} and the remark preceding it). Then
		\begin{itemize}
			\item[(1)]
			We have the natural identification $\Aut^0(\widehat{X}) = \Aut^0(X)$.
			\item[(2)]
			If $G$ is a subgroup of $\Aut(X)$ and $\widehat{G}$ denotes its natural action on $\widehat{X}$,
			then we have the
			natural identification
			$G/G_0 = \widehat{G}/\widehat{G}_0$.
		\end{itemize}
	\end{lemma}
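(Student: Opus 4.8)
The plan is to deduce everything from two facts: the rigidity of a birational morphism onto a normal variety, and Blanchard's lemma on the descent of the connected automorphism group. First I would record the elementary properties of $\sigma$. Since $\sigma$ is an $\Aut(X)$-equivariant resolution, there is by definition a homomorphism $\Aut(X) \to \Aut(\widehat X)$, $g \mapsto \widehat g$, with $\sigma \circ \widehat g = g \circ \sigma$; it is injective, because $\widehat g = \id_{\widehat X}$ forces $g \circ \sigma = \sigma$, hence $g = \id_X$ as $\sigma$ is surjective. The same reasoning shows $\Aut(\widehat X/X) = 1$: any $\varphi \in \Aut(\widehat X)$ with $\sigma \circ \varphi = \sigma$ restricts to the identity on the dense open locus over which $\sigma$ is an isomorphism, hence $\varphi = \id_{\widehat X}$. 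Finally, since $X$ is normal and $\sigma$ is proper and birational, $\sigma_* \OO_{\widehat X} = \OO_X$ (Zariski's main theorem together with the integral closedness of $\OO_X$). Under the embedding above, $\Aut^0(X)$, being connected, maps into $\Aut^0(\widehat X)$; this is the natural identification in (1), and it remains to show it is onto.

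For that I would apply Blanchard's lemma to $\sigma$: since $\sigma$ is proper with $\sigma_* \OO_{\widehat X} = \OO_X$, the connected group $\Aut^0(\widehat X)$ acts on $X$ in a unique way making $\sigma$ equivariant, which gives a homomorphism $r : \Aut^0(\widehat X) \to \Aut(X)$ with $\sigma \circ \varphi = r(\varphi) \circ \sigma$ for all $\varphi$; as $\Aut^0(\widehat X)$ is connected, $r$ lands in $\Aut^0(X)$. If $r(\varphi) = \id_X$ then $\sigma \circ \varphi = \sigma$, so $\varphi \in \Aut(\widehat X/X) = 1$, i.e. $r$ is injective. Moreover, for $g \in \Aut^0(X)$ the lift $\widehat g$ satisfies $\sigma \circ \widehat g = g \circ \sigma$, so by uniqueness of the descent $r(\widehat g) = g$; thus $r$ composed with the embedding $\Aut^0(X) \hookrightarrow \Aut^0(\widehat X)$ is $\id_{\Aut^0(X)}$, so $r$ is surjective as well. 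Hence $r$ and the embedding are mutually inverse isomorphisms, which is exactly (1). Part (2) is then formal: identifying $G$ with $\widehat G \le \Aut(\widehat X)$ via $g \mapsto \widehat g$ and $\Aut^0(X)$ with $\Aut^0(\widehat X)$ inside $\Aut(\widehat X)$ via (1), we get $\widehat G_0 = \widehat G \cap \Aut^0(\widehat X) = G \cap \Aut^0(X) = G_0$, whence the natural identification $G/G_0 = \widehat G/\widehat G_0$.

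The one step that needs genuine care is the appeal to Blanchard's lemma, and specifically the verification that $\sigma_* \OO_{\widehat X} = \OO_X$, which is what licenses the descent of $\Aut^0(\widehat X)$ to $X$ in the first place; I would also want to note explicitly that the descended action lands in $\Aut^0(X)$ rather than merely $\Aut(X)$, which is immediate from connectedness of $\Aut^0(\widehat X)$. Everything else — injectivity of $\Aut(X) \hookrightarrow \Aut(\widehat X)$, triviality of $\Aut(\widehat X/X)$, and the bookkeeping in (2) — is routine.
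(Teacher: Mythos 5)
Your proposal is correct and follows essentially the same route as the paper: the paper's proof simply notes $\sigma_* \OO_{\widehat X} = \OO_X$ (normality of $X$ plus birationality of $\sigma$) and then invokes Brion's Proposition 2.1, which is exactly the Blanchard-type descent of $\Aut^0(\widehat X)$ that you carry out; your write-up just makes the injectivity/surjectivity bookkeeping explicit. Nothing to add.
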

	
	\begin{proof}
		Since $X$ is normal and $\sigma$ is birational, we have $\sigma_* \sO_{\widehat X} = \sO_X$.
		Then (1) follows from \cite[Proposition 2.1]{Br11}.
		(2) is then a consequence of (1).
	\end{proof}
	
	\begin{definition}\label{def:dyn_deg_sing}
		Let $X$ be a normal projective variety of dimension $n$ and let
		$g \in \Aut(X)$.
		Define the {\it first dynamical degree} $d_1(g)$ of $g$ as the
		spectral radius of $g^*|_{\NS_\R(X)}$.
		Here and hereafter,
		$\NS(X) := \Pic (X)/\Pic^0 (X)$
		is the {\it N\'eron-Severi group} of $X$ and $\NS_\R(X) := \NS(X) \otimes_{\Z} \R$.
		
		If $\widehat{X} \to X$ is a $g$-equivariant generically finite morphism with $\widehat{X}$ smooth and projective,
		then $d_1(g) = d_1(g|_{\widehat X})$ by applying \cite[Lemma A.7, Proposition A.2]{NZ}
		with $x$ the pullback of an ample divisor on $X$
		and $y$ the pullback of the $(n-1)$-th power of an ample divisor on $X$.
		
		Our definition of $d_1(g)$ is just the usual one as in \cite{DS1}, when $X$ is a (smooth) compact K\"ahler manifold, 
		and is independent of the choice of the birational model $X$ where $g$ acts biregularly by \cite[Corollary 7]{DS2}.
		
		An element $g \in \Aut(X)$ is of {\it zero entropy} if $d_1(g) = 1$.
		A group $G \le \Aut(X)$ is  a {\it zero entropy} subgroup if every $g \in G$ has $d_1(g) = 1$.
	\end{definition}
	
	\begin{proposition}\label{p:act_on_NS}
		Let $X$ be a normal projective variety of dimension $n \ge 1$
		and $G \le \Aut(X)$ a zero entropy subgroup.
		Then there is a finite-index subgroup $G'$ of $G$ such that
		the natural map $\tau: G'/G'_0 \to G'|_{\NS_\R(X)}$ is an isomorphism.
	\end{proposition}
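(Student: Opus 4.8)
The plan is to reduce the singular projective case to the smooth compact Kähler case already treated in~\cite[Lemma 2.5]{DLOZI}, via an $\Aut(X)$-equivariant resolution. First I would invoke \cite[Theorem 1.0.3]{Wl} to choose an $\Aut(X)$-equivariant resolution $\sigma : \widehat{X} \to X$; let $\widehat{G} \le \Aut(\widehat X)$ be the induced action of $G$. By Definition~\ref{def:dyn_deg_sing} every $g \in G$ has $d_1(g) = d_1(g|_{\widehat X}) = 1$, so $\widehat G$ is a zero entropy subgroup of $\Aut(\widehat X)$ in the usual sense. Applying \cite[Lemma 2.5]{DLOZI} to $\widehat G \le \Aut(\widehat X)$, we obtain a finite-index subgroup $\widehat{G}' \le \widehat{G}$ such that the natural map $\widehat{G}'/(\widehat G')_0 \to \widehat{G}'|_{H^{1,1}(\widehat X, \R)}$ is an isomorphism. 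Pulling back along the isomorphism $G/G_0 \cong \widehat{G}/\widehat{G}_0$ from Lemma~\ref{l:equiv_resol}(2), let $G' \le G$ be the finite-index subgroup corresponding to $\widehat{G}'$.

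Next I would relate the action on $H^{1,1}(\widehat X, \R)$ to the action on $\NS_\R(X)$. Since $X$ is projective and $\sigma$ is birational with $\sigma_* \sO_{\widehat X} = \sO_X$, pullback $\sigma^* : \NS_\R(X) \to \NS_\R(\widehat X)$ is injective and $\Aut(X)$-equivariant (identifying $G$ with $\widehat G$), and $\NS_\R(\widehat X) \hookrightarrow H^{1,1}(\widehat X, \R)$ is a $\widehat G$-submodule because $\widehat G$ acts by automorphisms. Thus the composite $G' \to \widehat{G}'|_{H^{1,1}(\widehat X, \R)}$ factors through $G'|_{\NS_\R(X)}$ on one side, which gives surjectivity of $\tau$ automatically; the content is injectivity of $\tau$, i.e. that $\ker\bigl(G' \to G'|_{\NS_\R(X)}\bigr) = G'_0$. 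The inclusion $G'_0 \subseteq \ker(\tau)$: an element of $G'_0 = G' \cap \Aut^0(X)$ acts trivially on $\NS_\R(X)$ because $\Aut^0(X)$ is connected and acts trivially on the discrete group $\NS(X) = \Pic(X)/\Pic^0(X)$. For the reverse inclusion, suppose $g \in G'$ acts trivially on $\NS_\R(X)$; then its lift $\widehat g$ acts trivially on $\sigma^*\NS_\R(X)$, and in particular fixes the big and nef class $\sigma^*\{H\}$ for $H$ ample on $X$. By \cite[Corollary 2.2]{DHZ} (as used in the proof of Lemma~\ref{l:length_gen_fin_map}) a power of $\widehat g$ lies in $(\widehat G)_0$; since $\widehat{G}'/(\widehat G')_0$ is torsion-free (by the choice in \cite[Lemma 2.5]{DLOZI}, the action on $H^{1,1}$ being unipotent), $\widehat g \in (\widehat G')_0$, hence $g \in G'_0$ by Lemma~\ref{l:equiv_resol}(2).

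The main obstacle I anticipate is making the comparison between $\NS_\R$-action and $H^{1,1}(\widehat X, \R)$-action rigorous enough that the torsion-freeness transfers: one must be sure that "$\widehat G'/(\widehat G')_0$ torsion-free" really does pass to the image in $\GL(\sigma^*\NS_\R(X))$, which it does since $\sigma^*\NS_\R(X)$ is a $\widehat G'$-stable subspace of $H^{1,1}(\widehat X,\R)$ on which $\widehat G'$ still acts unipotently, so a torsion element of the image would lift to a torsion element of $\widehat G'/(\widehat G')_0$. Alternatively, one can bypass $H^{1,1}$ entirely and argue directly on $\NS_\R(X)$: since $G'$ is a zero entropy subgroup, \cite[Lemma 2.5]{DLOZI} applied on the level of Néron–Severi groups (its proof only uses a $G$-invariant big-and-nef class and the structure of $\NS$) already gives that, after passing to a further finite-index subgroup, the action of $G'$ on $\NS_\R(X)$ is unipotent and has kernel contained in $\Aut^0(X)$; combined with $G'_0 \subseteq \ker(\tau)$ this yields the isomorphism. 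Either route completes the proof.
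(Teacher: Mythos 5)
Your proposal is correct and takes essentially the same route as the paper: an $\Aut(X)$-equivariant resolution $\sigma:\widehat X\to X$, the finite-index result of \cite{DLOZI} giving a unipotent (hence torsion-free) quotient $\widehat G'/\widehat G'_0$, the observation that $\sigma^*$ of an ample class is nef and big, and the generalized Fujiki--Lieberman theorem of \cite{DHZ} to force any $g$ acting trivially on $\NS_\R(X)$ into $G'_0$. The only difference is cosmetic: the paper additionally records, via the classical Fujiki--Lieberman finiteness, that $\widehat G'/\widehat G'_0\to\widehat G'|_{\NS_\R(\widehat X)}$ is an isomorphism (a fact reused later in Definition~\ref{def:ess_sing}), while you work directly with the action on $H^{1,1}(\widehat X,\R)$.
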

	
	\begin{proof}
		We use the notation $\sigma: \widehat{X} \to X$, $\widehat{G} = G|_{\widehat X}$, etc, as in Lemma \ref{l:equiv_resol}.
		We may identify $\NS_\R(X)$ with the subspace $\sigma^*(\NS_\R(X)) \subseteq \NS_\R(\widehat{X})$.
		By~\cite[Proposition 2.6]{DLOZI}, applied to $\widehat X$, there is a finite-index subgroup $G'$ of $G$, such that,
		for $\widehat{G}' := G'|_{\widehat X}$,
		the natural map $\widehat{G}'/\widehat{G}'_0 \to \widehat{G}'|_{H^2(\widehat{X}, \R)}$ is injective with image a unipotent
		subgroup of $\GL(H^2(\widehat{X}, \R))$.
		Hence, $\widehat{G}'|_{\NS_\R(\widehat{X})}$ is
		also a unipotent subgroup of $\GL(\NS_\R(\widehat{X}))$.
		Now, the kernel of the natural map $\widehat{\tau}: G'/G'_0 = \widehat{G}'/\widehat{G}'_0 \to \widehat{G}'|_{\NS_\R(\widehat{X})}$
		is a finite group by Fujiki \cite[Theorem 4.8]{Fujiki} and Lieberman \cite[Proposition 2.2]{Lieberman}. Hence, $\Ker (\widehat{\tau}) = \{1\}$, i.e., $\widehat{\tau}$ is an isomorphism, because
		a unipotent group has no non-trivial finite subgroup.
		
		We still need to prove that $\tau$ is injective.
		Let $g \in G'$ which acts on $\NS_\R({X})$ trivially. Let $h$ be an ample divisor class on $X$. Set $\widehat{h} := \sigma^*h$. Since $\sigma$ is a birational morphism, $\widehat{h}$ is a nef and big class on $\widehat{X}$. Since $g|_{\NS_\R({X})}$ is trivial by our assumption, it follows that $(g|_{\widehat X})^*(\widehat{h}) = \widehat{h}$. Thus, by a generalized version of Fujiki-Lieberman's theorem in \cite[Theorem 2.1]{DHZ}, a power of $g|_{\widehat X}$ belongs to $\widehat{G}'_0$, that is, in $\widehat{G}'/\widehat{G}'_0 = G'/G_0'$, the class $[g|_{\widehat X}] = [g]$ has to be a torsion element. On the other hand, $G'/G_0'$ is torsion free as it is a unipotent group. Thus $g \in G_0'$. Hence, $\tau$ is injective.
	\end{proof}
	
	\begin{definition}\label{def:ess_sing}
		For $(X, G)$ as in Proposition \ref{p:act_on_NS},
		define the
		{\it essential derived length} of the action of $G$ on $X$ by
		$$\ell_{\rm ess}(G, X) := \ell(G'|_{\NS_\R(X)}) \,$$
		and the
		{\it essential nilpotency class} of the action of $G$ on $X$ by
		$$c_{\rm ess}(G, X) := c(G'|_{\NS_\R(X)}). \,$$
		Then, by Proposition \ref{p:act_on_NS}, with its notation and proof, 
		and by~\cite[Proposition 2.6]{DLOZI}, we deduce that
		$$\ell_{\rm ess}(G, X) = \ell(G'|_{\NS_\R(X)}) = \ell(\widehat{G}'|_{\NS_\R(\widehat{X})}) = \ell_{\rm ess}(G|_{\widehat X}, \widehat{X})$$
		and
		$$c_{\rm ess}(G, X) = c(G'|_{\NS_\R(X)}) = c(\widehat{G}'|_{\NS_\R(\widehat{X})}) = c_{\rm ess}(G|_{\widehat X}, \widehat{X})\,.$$
		Also, by Proposition \ref{p:act_on_NS}, for a smooth projective variety $X$, 
		we deduce that our new definitions of
		$\ell_{\rm ess}(G, X)$ and
		$c_{\rm ess}(G, X)$ coincide with
		the definitions given in the introduction.
	\end{definition}
	
	The numbers $\ell_{\rm ess}(G, X)$ and $c_{\rm ess}(G, X)$ are birational invariants. 
	More generally, we have the following statement which is similar to Lemma~\ref{l:length_gen_fin_map}.
	
	\begin{lemma}\label{l:Aut_0-ext}
		Let $\pi: X \ratmap X'$ be a dominant rational map of normal projective varieties, 
		of the same dimension. Assume a group $G$ acts on both $X$ and $X'$  biholomorphically so that
		$\pi$ is $G$-equivariant.
		Then
		$$\ell_{\rm ess}(G, X) = \ell_{\rm ess}(G|_{X'}, X') \quad \text{and} \quad
		c_{\rm ess}(G, X) = c_{\rm ess}(G|_{X'}, X') .$$
	\end{lemma}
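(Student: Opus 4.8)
The plan is to reduce the statement to its smooth counterpart, Lemma~\ref{l:length_gen_fin_map}. Recall that $\ell_{\rm ess}(G,X)$ and $c_{\rm ess}(G,X)$ were defined in Definition~\ref{def:ess_sing} precisely so as to be computed on an $\Aut(X)$-equivariant smooth model; implicit in the statement is that $G\le\Aut(X)$ is a zero entropy subgroup (otherwise these invariants are not defined), and part of the argument will be to check that $G|_{X'}\le\Aut(X')$ is then a zero entropy subgroup as well, so that the right-hand sides make sense.

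First I would fix an $\Aut(X)$-equivariant resolution $\sigma:\widehat X\to X$ and an $\Aut(X')$-equivariant resolution $\sigma':\widehat{X'}\to X'$ as in Lemma~\ref{l:equiv_resol} (via \cite[Theorem 1.0.3]{Wl}), and set $\widehat G:=G|_{\widehat X}$, $\widehat G':=G|_{\widehat{X'}}$. Since $\sigma$ is a $G$-equivariant generically finite morphism, Definition~\ref{def:dyn_deg_sing} gives $d_1(g|_{\widehat X})=d_1(g|_X)=1$ for every $g\in G$, so $\widehat G$ is a zero entropy subgroup of $\Aut(\widehat X)$; the same will apply to $\widehat G'$ once $G|_{X'}$ is known to be zero entropy. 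By Definition~\ref{def:ess_sing}, whose justification rests on Lemma~\ref{l:equiv_resol},
$$\ell_{\rm ess}(G,X)=\ell_{\rm ess}(\widehat G,\widehat X),\qquad c_{\rm ess}(G,X)=c_{\rm ess}(\widehat G,\widehat X),$$
and the analogous equalities will hold for $X'$ and $\widehat{X'}$.

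Next I would transport everything along $\pi$. The composite $\widehat\pi:=(\sigma')^{-1}\circ\pi\circ\sigma:\widehat X\ratmap\widehat{X'}$ is a dominant meromorphic map between smooth projective (hence compact K\"ahler) manifolds of the same dimension, and it is $G$-equivariant because $\sigma$, $\pi$ and $\sigma'$ all are. Applying Lemma~\ref{l:length_gen_fin_map} to $\widehat\pi$ --- which, via the semiconjugacy invariance of the first dynamical degree, also forces $\widehat G'$, and hence $G|_{X'}$, to be a zero entropy subgroup, so that the previous step applies to $X'$ --- yields, after replacing $G$ by a finite-index subgroup (harmless for the essential invariants by \cite[Proposition 2.6]{DLOZI}),
$$\ell_{\rm ess}(\widehat G,\widehat X)=\ell_{\rm ess}(\widehat G',\widehat{X'}),\qquad c_{\rm ess}(\widehat G,\widehat X)=c_{\rm ess}(\widehat G',\widehat{X'}).$$
Chaining these with the equalities of the previous step gives the lemma.

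I do not expect a serious obstacle: once the functoriality is in place the argument is essentially formal. The two points that need care are (i) the mutual compatibility of $\sigma$, $\sigma'$ and the lift $\widehat\pi$ with the $G$-action, which is arranged by using canonical/functorial resolutions and composing, and (ii) the propagation of the zero-entropy hypothesis --- and therefore of the very definability of $\ell_{\rm ess}$ and $c_{\rm ess}$ on each of $X$, $X'$, $\widehat X$ and $\widehat{X'}$ --- across $\pi$ and the two resolutions, which follows from the birational invariance of $d_1$ recorded in Definition~\ref{def:dyn_deg_sing} together with \cite[Theorem 1.1]{DNT} as used inside Lemma~\ref{l:length_gen_fin_map}.
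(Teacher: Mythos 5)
Your proposal is correct and follows essentially the same route as the paper: take $\Aut$-equivariant resolutions as in Lemma~\ref{l:equiv_resol}, use the identifications of $\ell_{\rm ess}$ and $c_{\rm ess}$ with their values on the smooth models from Definition~\ref{def:ess_sing}, and apply Lemma~\ref{l:length_gen_fin_map} to the induced dominant meromorphic map $\widehat X \ratmap \widehat{X}'$. Your extra remarks on propagating the zero-entropy hypothesis (via the birational invariance of $d_1$ and \cite[Theorem 1.1]{DNT}) are consistent with what the paper leaves implicit.
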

	
	\begin{proof}
		First we take $G$-equivariant resolutions $\widehat{X} \to X$ and $\widehat{X}' \to X'$ as in Lemma
		\ref{l:equiv_resol}. By  the remark in Definition \ref{def:ess_sing}, we have
		$c_{\rm ess}(G, X)= c_{\rm ess}(G|_{\widehat X}, \widehat{X})$ and
		$c_{\rm ess}(G|_{X'}, X')= c_{\rm ess}(G|_{\widehat X'}, \widehat{X}')$.
		We then conclude by Lemma \ref{l:length_gen_fin_map} applied to the induced map $\widehat{X} \ratmap \widehat{X}'$ that $c_{\rm ess}(G, X) = c_{\rm ess}(G|_{X'}, X')$.
		The proof for the first equality is similar.
	\end{proof}

	\section{Upper bounds of virtual derived lengths and nilpotency classes}\label{sect-Gttf}
	\subsection{$G$-modules}

	Let $G$ be a group and $M$ a $G$-module.
	
	\begin{lem}\label{lem-rep}
		
		Let $g \in \Gamma_{l}G$. Then $\Id - g \in \End_\Z(M)$ is a finite sum of elements of the form
		$$\pm h_1(\Id - g_1) \cdots h_{k}(\Id -  g_{k}) $$
		for some integer $k \ge l+1$ and $g_1,\ldots,g_{k},h_1,\ldots,h_{k} \in G$.
	\end{lem}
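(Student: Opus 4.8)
The plan is to reduce everything to a single identity inside the integral group ring $R := \Z[G]$, and then to prove that identity by induction on $l$. Since the $G$-action on any $M$ extends to a ring homomorphism $R \to \End_\Z(M)$, it suffices to prove the assertion for $M = R$ with its left regular module structure, i.e. to work with $\Id - g$, $h_i$, $g_i$ inside $R$. Let $I := \ker(\epsilon\colon R \to \Z)$ be the augmentation ideal and $I^n$ its $n$-th (two-sided) power. Each factor $h(\Id - g)$ has augmentation $0$, so any product $h_1(\Id - g_1)\cdots h_k(\Id - g_k)$ with $k \ge l+1$ lies in $I^k \subseteq I^{l+1}$; conversely $I$ is generated as an abelian group by the elements $h(\Id - g)$ (for instance $g' - \Id = g'(\Id - g'^{-1})$, and such differences span $I$), hence $I^{l+1}$ is generated as an abelian group by the products $h_1(\Id - g_1)\cdots h_{l+1}(\Id - g_{l+1})$. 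Consequently the set of finite sums of terms $\pm\,h_1(\Id - g_1)\cdots h_k(\Id - g_k)$ with $k \ge l+1$ is exactly $I^{l+1}$, and Lemma~\ref{lem-rep} becomes the classical inclusion relating the lower central series to the powers of the augmentation ideal:
$$g \in \Gamma_l G \ \Longrightarrow\ \Id - g \in I^{l+1}.$$
(In fact one may always take $k = l+1$.)

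I would prove this inclusion by induction on $l$. For $l = 0$ it is trivial, as $\Id - g \in I = I^{1}$ for all $g \in G$. For the inductive step, set $D_n := \{ g \in G : \Id - g \in I^{n}\}$; this is a subgroup of $G$ because $gh - \Id = g(h - \Id) + (g - \Id)$ and $g^{-1} - \Id = -\,g^{-1}(g - \Id)$ both lie in $I^{n}$, using that $I^{n}$ is a left ideal. By the induction hypothesis $\Gamma_{l-1}G \subseteq D_{l}$. Since $\Gamma_l G = [G, \Gamma_{l-1}G]$ is generated, as a group, by the commutators $[a,\gamma]$ with $a \in G$ and $\gamma \in \Gamma_{l-1}G$, and since $D_{l+1}$ is a subgroup, it is enough to show $[a,\gamma] \in D_{l+1}$ for all such $a,\gamma$. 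This comes from the two elementary identities in $R$
$$[a,\gamma] - \Id = (a\gamma - \gamma a)\,(\gamma a)^{-1}, \qquad a\gamma - \gamma a = (\Id - a)(\Id - \gamma) - (\Id - \gamma)(\Id - a),$$
both checked by expanding: the second writes $a\gamma - \gamma a$ as a sum of an element of $I\cdot I^{l}$ and one of $I^{l}\cdot I$, hence as an element of $I^{l+1}$; multiplying on the right by $(\gamma a)^{-1} \in R$ keeps us in $I^{l+1}$ since $I^{l+1}$ is a right ideal. Thus $[a,\gamma] \in D_{l+1}$, closing the induction and proving the Lemma.

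I expect the only delicate points to be bookkeeping rather than genuine difficulties. First, one should verify carefully that finite sums of the displayed expressions form exactly $I^{l+1}$; the substance here is that a leading or trailing group-element factor in such a product can always be absorbed (using $h(\Id - a) = (\Id - hah^{-1})h$ and $(\Id - a)h = h(\Id - h^{-1}ah)$), which is what makes both inclusions work. Second, one must genuinely pass from a single commutator to an arbitrary element of $\Gamma_l G$, which is exactly why it matters that $D_{l+1}$ is a \emph{subgroup} of $G$ and not merely a set containing the commutators $[a,\gamma]$. Finally, expanding $[a,\gamma] - \Id$ in the most naive way tends to leave a stray term of shape $\Id - \gamma^{-1}$ with only $l$ factors; the symmetric identity for $a\gamma - \gamma a$ recorded above is precisely what lands directly in $I^{l+1}$ and avoids that.
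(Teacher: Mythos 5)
Your proof is correct and is in substance the same as the paper's: both argue by induction on $l$, verify the statement on commutators via the identity $a\gamma-\gamma a=(\Id-a)(\Id-\gamma)-(\Id-\gamma)(\Id-a)$ (the paper's displayed formula for $\Id-ghg^{-1}h^{-1}$ is this identity multiplied by a group element), and then pass to arbitrary elements of $\Gamma_{l}G$ using the identities $\Id-gg'=(\Id-g)+(\Id-g')-(\Id-g)(\Id-g')$ and $\Id-g^{-1}=-g^{-1}(\Id-g)$ — which is exactly your check that $D_{l+1}$ is a subgroup. Your packaging via $\Z[G]$, the augmentation ideal $I$, and the observation that finite sums of the displayed products with $k\ge l+1$ are precisely $I^{l+1}$ (so the lemma becomes the classical inclusion of the lower central series in the dimension subgroups) is a clean but essentially cosmetic reformulation of the same argument.
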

	
	\begin{proof}
		We prove Lemma~\ref{lem-rep} by induction. The case $l = 0$ is obvious.
		Suppose that the statement is proven for $l$. Let $g \in \Gamma_{l}G$ and $h \in G$. We have
		$$\Id - ghg^{-1}h^{-1} = gh(\Id -h^{-1})(\Id - g^{-1}) - gh(\Id -g^{-1})(\Id - h^{-1})$$
		and the statement for $l+1$ and for elements in $\Gamma_{l+1}G$ of the form $[g,h]$ follows from the induction hypothesis. The general case follows from the identities
		$$\Id - gg' = (\Id - g) + (\Id - g') - (\Id - g)(\Id - g')$$
		and
		$$\Id - g^{-1} = -g^{-1}(\Id - g)$$
		for every $g,g' \in G$  and by induction on the word length of the elements of  $\Gamma_{l+1}G$ with respect to the generators $\left\{[g,h] \mid g \in G , h  \in \Gamma_{l}G \right\} $.
	\end{proof}

	\subsection{Tori}\label{ss-tores}

	Let $T$ be a complex torus. We regard $T$ as a group variety with origin $0 \in T$.
	Let $G$ be a group acting on $T$ such that the image of $G \to \GL(H^1(T,\C))$ is a unipotent subgroup.
	
	\begin{lem}\label{lem-ann}
		Let $g_1,\ldots,g_{m},h_1,\ldots,h_{m} \in G$.
		If $m \ge \dim T$, then
		$$h^*_1(\Id - g^*_1) \cdots h^*_{m}(\Id -  g^*_{m}) = 0 \in \End(H^1(T)). $$
		Assume that the $G$-action on $T$ fixes the origin. If $m \ge \dim T$, then
		$$h_1(\Id - g_1) \cdots h_{m}(\Id -  g_{m}) = 0 \in \End(T). $$
	\end{lem}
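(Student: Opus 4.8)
The plan is to prove the first assertion (about $\End(H^1(T))$) directly, and then lift it to $\End(T)$ using the fact that the action fixing the origin is determined by its action on the rational homology, essentially the classical fact that an endomorphism of a complex torus is determined by the induced map on $H^1$.

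First I would recall that since the image of $G$ in $\GL(H^1(T,\C))$ is unipotent, each $\Id - g^*$ is a nilpotent operator on the $\dim_\C H^1(T,\C) = \dim T$-dimensional space... wait, that alone is not enough, since products of nilpotent operators need not vanish. The actual point is more subtle and uses the \emph{group} structure: the key input should be Lemma~\ref{lem-rep} together with a dimension count. Concretely, I would argue that the operators $\Id - g^*$ for $g \in G$, acting on $V := H^1(T,\C)$, generate (as a two-sided ideal inside the associative algebra generated by $G|_V$) an ideal $I$ such that $I^{\dim T} = 0$: indeed, because $G|_V$ is unipotent, by Engel/Kolchin we can put $G|_V$ simultaneously in upper-triangular form with $1$'s on the diagonal, so every $\Id - g^*$ is strictly upper triangular, hence any product of $\dim T$ of them — even interleaved with arbitrary elements $h_i^*$ of the unipotent group $G|_V$ — is again strictly upper triangular raised past the nilpotency bound, i.e. lands in the $\dim T$-th power of the ideal of strictly upper triangular matrices, which is $0$. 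That gives the first displayed vanishing with $m \ge \dim T$.

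For the second assertion, assume the $G$-action fixes $0 \in T$, so that $G$ acts by group automorphisms of $T$ and we may regard each $g \in G$ as an element of $\End(T)$, with composition corresponding to multiplication and $\Id - g$ meaning the endomorphism $t \mapsto t - g(t)$ (using the group law on $T$). The map $\End(T) \to \End(H^1(T,\Z))$, $\varphi \mapsto \varphi^*$, is an injective ring homomorphism (an endomorphism of a complex torus $T = W/\Lambda$ is a $\C$-linear map on $W$ preserving $\Lambda$, hence recovered from its action on $\Lambda \cong H_1(T,\Z)$, dual to $H^1(T,\Z)$); it remains injective after $\otimes\,\C$. Applying this homomorphism, the endomorphism $h_1(\Id - g_1)\cdots h_m(\Id - g_m) \in \End(T)$ maps to $h_1^*(\Id - g_1^*)\cdots h_m^*(\Id - g_m^*) \in \End(H^1(T,\C))$, which vanishes by the first part once $m \ge \dim T$. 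By injectivity the endomorphism of $T$ itself is $0$.

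The main obstacle is the first step: making precise why \emph{interleaving} arbitrary unipotent operators $h_i^*$ between the strictly-upper-triangular factors $(\Id - g_i^*)$ does not spoil the vanishing. The clean way is to observe that if $N$ denotes the associative subalgebra of $\mathrm{End}(V)$ consisting of strictly upper-triangular matrices (after simultaneous triangularization of the unipotent group $G|_V$), then $N^{\dim T} = 0$, and each factor $h_i^*(\Id - g_i^*)$ already lies in $N$ because $h_i^*$ preserves the flag and $(\Id - g_i^*) \in N$; hence the product of $m \ge \dim T$ such factors lies in $N^m \subseteq N^{\dim T} = 0$. I would also double-check the edge case $\dim T = 0$ (then $T$ is a point and every endomorphism is $0$, consistent with an empty product convention, though the hypothesis $m \ge \dim T = 0$ would be vacuous — in practice the lemma is only applied with $\dim T \ge 1$). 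Everything else is routine.
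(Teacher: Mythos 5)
There is a genuine gap in your first step, and it is precisely the point the lemma turns on: you assert that $\dim_\C H^1(T,\C) = \dim T$, but for a complex torus of dimension $n$ one has $b_1(T) = 2n$, so $\dim_\C H^1(T,\C) = 2\dim T$. Your argument (simultaneous strict upper-triangularization of the unipotent group $G|_{H^1(T,\C)}$, each factor $h_i^*(\Id-g_i^*)$ lying in the nilpotent ideal $N$ of the flag, and $N^{d}=0$ with $d$ the dimension of the space) is correct as far as it goes, but on $H^1(T,\C)$ it only yields the vanishing for $m \ge 2\dim T$, not $m \ge \dim T$. The sharper bound is what the lemma claims and what is actually needed downstream (Proposition~\ref{p:tori_conj1a} uses $m\ge\dim T$ to get nilpotency class $\le n-1$ rather than $2n-1$). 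The missing idea is Hodge-theoretic: the operators $g^*,h^*$ are pullbacks by holomorphic maps, hence morphisms of the Hodge structure on $H^1(T)$; one restricts the whole product to $H^{1,0}(T,\C)$, which has dimension exactly $\dim T$, runs the unipotent/strictly-triangular (or, as in the paper, the commutation identity $h^*(\Id-g^*)=(\Id-h^*g^*(h^*)^{-1})h^*$ followed by the length count) argument there, and then concludes vanishing on all of $H^1(T)$ because an endomorphism of the Hodge structure $H^1(T)$ is zero as soon as its $(1,0)$-component is zero (the $(0,1)$-part being its complex conjugate). Without this reduction your bound is off by a factor of $2$ and the stated lemma is not proved.

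Your second step is essentially right in spirit (faithfulness of $T\mapsto H^1(T,\Z)$ on complex tori with origin), with one small inaccuracy: $\varphi\mapsto\varphi^*$ is additive but contravariant, i.e.\ a ring \emph{anti}-homomorphism, so the image of $h_1(\Id-g_1)\cdots h_m(\Id-g_m)$ is $(\Id-g_m^*)h_m^*\cdots(\Id-g_1^*)h_1^*$ rather than the expression you wrote; after rewriting each factor as $h^*(\Id-(hgh^{-1})^*)$ (as in the paper) it is again of the shape covered by the first assertion, so this is only a bookkeeping slip — but it should be stated correctly. The essential defect to repair is the dimension count via $H^{1,0}$.
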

	
	\begin{proof}
		Since $h^*(\Id - g^*) = (\Id - h^*g^*(h^*)^{-1})h^*$ for every $g,h \in G$, we have
		$$h^*_1(\Id - g^*_1) \cdots h^*_{m}(\Id -  g^*_{m}) = (\Id - g'^*_1) \cdots (\Id -  g'^*_{m})h^*_1\cdots h^*_{m}$$
		for some $g'_1,\ldots,g'_{m} \in G$. As $G$ acts on $H^{1,0}(T,\C)$ by unipotent elements and $\dim H^{1,0}(T,\C)= \dim T$, whenever $m \ge \dim T$, we have
		\begin{equation}\label{eqn-H10}
			(\Id - g'^*_1) \cdots (\Id -  g'^*_{m}) = 0 \in \End(H^{1,0}(T,\C)).
		\end{equation}
		As an endomorphism of Hodge structure $\phi: H^1(T) \cto$ is zero if and only if
		the induced endomorphism $\phi: H^{1,0}(T,\C) \cto$ is zero,
		the vanishing~\eqref{eqn-H10} also holds in $\End(H^1(T))$.
		
		The second statement follows from the first and 
		the equality
		$$(h(\Id-g))^* = h^* - g^*h^* = h^*(\Id - (hgh^{-1})^*)$$
		for every $g,h \in G$ together with the fact that the functor $T \mapsto H^1(T,\Z)$ from the category of complex tori (where morphisms are homomorphisms of complex tori) is faithful.
	\end{proof}
	
	\begin{proposition}\label{p:tori_conj1a}
		Let $G$ be a group acting faithfully on a complex torus $T$ of dimension $n \ge 1$. Suppose that the image $G \to \GL(H^1(T))$ is a unipotent subgroup. Then $\Gamma_l(G/G_0) = 1$ (resp. $(G/G_0)^{(l)} = 1$)  whenever $l \ge n-1$ (resp. $2^l > n-1$). 
		
		In particular, Conjecture \ref{conj-DLOZ} holds for complex tori.
	\end{proposition}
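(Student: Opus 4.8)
The plan is to derive the Proposition directly from Lemmas~\ref{lem-rep} and~\ref{lem-ann}. Write $n:=\dim T$. Since $T$ is a complex torus, $\Aut^0(T)=T$ acts by translations, so the natural action $G\to\GL(H^1(T,\C))$ has kernel exactly $\Aut^0(T)\cap G=G_0$ (an automorphism acting trivially on $H^1$ has trivial linear part, hence is a translation). Thus $G/G_0$ embeds into $\GL(H^1(T,\C))$ with unipotent image, and it suffices to show that, for $l\ge n-1$ (resp.\ $2^l>n-1$), every element of $\Gamma_l G$ (resp.\ of $G^{(l)}$) acts trivially on $M:=H^1(T,\C)$; indeed this forces $\Gamma_l G\subseteq G_0$ (resp.\ $G^{(l)}\subseteq G_0$), i.e.\ $\Gamma_l(G/G_0)=1$ (resp.\ $(G/G_0)^{(l)}=1$). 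I regard $M$ as a left $G$-module via $g\mapsto(g^{-1})^*$, so that the hypothesis of the Proposition is precisely that the image of $G$ in $\GL(M)$ is unipotent, and $g$ acts trivially on $M$ exactly when $g^*=\Id$.

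For the lower central series, fix $l\ge n-1$ and $g\in\Gamma_l G$. By Lemma~\ref{lem-rep} applied to the $G$-module $M=H^1(T,\C)$, the operator $\Id-g\in\End(M)$ is a finite sum of terms $\pm h_1(\Id-g_1)\cdots h_k(\Id-g_k)$ with $g_i,h_i\in G$ and $k\ge l+1\ge n=\dim T$. Since the image of $G$ in $\GL(H^1(T,\C))$ is unipotent, each such term vanishes by the first assertion of Lemma~\ref{lem-ann}. Hence $g$ acts as $\Id$ on $H^1(T,\C)$, and therefore $\Gamma_l(G/G_0)=1$ for all $l\ge n-1$. For the derived series one uses the standard inclusion $H^{(l)}\subseteq\Gamma_{2^l-1}H$, valid for every group $H$ and obtained by induction on $l$ from the commutator relation $[\Gamma_i H,\Gamma_j H]\subseteq\Gamma_{i+j+1}H$: if $2^l>n-1$ then $2^l-1\ge n-1$, so $(G/G_0)^{(l)}\subseteq\Gamma_{2^l-1}(G/G_0)=1$ by what precedes. (Alternatively, one reruns the induction in the proof of Lemma~\ref{lem-rep} with $G^{(l)}$ in place of $\Gamma_l G$ to see directly that for $g\in G^{(l)}$ the operator $\Id-g$ is a sum of products of at least $2^l$ factors of the form $h_i(\Id-g_i)$, and then applies Lemma~\ref{lem-ann} once $2^l\ge n$.)

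It remains to deduce Conjecture~\ref{conj-DLOZ} for a complex torus $T$. Let $G\le\Aut(T)$ be a zero entropy subgroup. By \cite[Proposition 2.6]{DLOZI} we may replace $G$ by a finite-index subgroup so that $G/G_0$ injects into $\GL(H^2(T,\C))$ with unipotent image; since $H^2(T,\C)=\wedge^2 H^1(T,\C)$ and the kernel of $\GL(H^1(T,\C))\to\GL(\wedge^2 H^1(T,\C))$ is $\{\pm\Id\}$, the Zariski closure of $G/G_0\hookrightarrow\GL(H^1(T,\C))$ has unipotent identity component, so after one further finite-index reduction the action of $G$ on $H^1(T,\C)$ is itself unipotent. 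None of these reductions changes $c_{\ess}(G,T)$, and the resulting $G$ still satisfies the hypotheses of the Proposition; applying it gives $\Gamma_{n-1}(G/G_0)=1$, hence $c_{\ess}(G,T)=c(G/G_0)\le n-1=\dim T-1$, which is the assertion of Conjecture~\ref{conj-DLOZ} in this case.

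I do not expect a genuine difficulty, since the two computational inputs are already available. The points needing a little care are: applying Lemmas~\ref{lem-rep} and~\ref{lem-ann} to the $G$-module $H^1(T,\C)$ rather than to $T$ itself --- for which the ``action fixes the origin'' hypothesis of Lemma~\ref{lem-ann} would be required; and, in the last step, the passage from unipotency on $H^2(T,\C)$ to unipotency on $H^1(T,\C)$ via the Zariski closure.
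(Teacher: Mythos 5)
Your proof is correct and follows essentially the same route as the paper: Lemma~\ref{lem-rep} combined with Lemma~\ref{lem-ann} kills $\Gamma_{l}$ for $l \ge n-1$, and the inclusion $H^{(l)}\subseteq\Gamma_{2^l-1}H$ handles the derived series. The only (harmless) variations are that you apply the first assertion of Lemma~\ref{lem-ann} to $H^1(T,\C)$ and conclude $\Gamma_lG\subseteq G_0$ directly, whereas the paper first replaces $G$ by an origin-fixing subgroup with the same image in $\Aut(T)/\Aut^0(T)$ and uses the second assertion to get $\Gamma_lG=1$ in $\End(T)$; and, in the final reduction, your claim that $\ker\bigl(\GL(H^1)\to\GL(\wedge^2H^1)\bigr)=\{\pm\Id\}$ requires $n\ge 2$, the case $n=1$ being trivial anyway since the image of $\Aut(T)$ in $\GL(H^1(T))$ is then finite.
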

	
	\begin{proof}
		Fix an origin $0 \in T$.
		Since $\Aut(T)$ is a semi-direct product of $\Aut^0(T)$ and group automorphisms of $(T,0)$,
		there is a group $H \le \Aut(T)$ such that $H$ fixes the origin of $T$ and $H$ and $G$ have the same image via $\Aut(T) \to \Aut(T)/\Aut^0(T)$. Replacing $G$ by $H$
		we may assume that $G \cap \Aut^0(T) = 1$ and $G$ fixes the origin of $T$.
		
		For the first statement, by Lemmas~\ref{lem-ann} and \ref{lem-rep} we have $\Gamma_l G = 1$ (and thus $\Gamma_l(G/G_0) = 1$) whenever $l \ge n-1$.
		Since $H^{(i)} = 1$ provided $2^i > c(H)$ (see e.g. \cite[\S 5.1.12, Proof]{Ro})
		for any group $H$,
		the statement for $(G/G_0)^{(l)}$ follows from the statement for $\Gamma_l G$.
		The second statement follows from the first statement since every zero entropy group $G \le \Aut(T)$ acts as unipotent group on $H^1(T)$, after replacing $G$ by a finite-index subgroup.
	\end{proof}
	
	\begin{pro}\label{pro-triv}
		Let $T$ be a complex torus of dimension $n \ge 1$. Let $G \le \Aut(T)$ be a subgroup such that  the image $G \to \GL(H^1(T))$ is a unipotent subgroup. 
		\begin{enumerate}
			\item The subgroup $\Gamma_nG$ is trivial, and $G^{(l)}$ is trivial whenever $2^{l} > n$.
			\item Assume moreover that there exists a $G$-stable subtorus $T' \le T$ (possibly $T' = 0$).
			If $T' \ne T$, then $\Gamma_{n-1}G$ is trivial, and $G^{(l)}$ is trivial whenever $2^{l} > n-1$.
			
		\end{enumerate}
	\end{pro}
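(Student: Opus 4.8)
The plan is to \emph{linearize} the affine $G$-action on $T$. Using the semidirect product decomposition $\Aut(T) = T \rtimes \Aut(T,0)$, write each $g \in G$ as $g = (\phi_g, a_g)$ with $\phi_g \in \Aut(T,0)$ the linear part and $a_g = g(0) \in T$ the translation part, and set $\bar G := \{\phi_g : g \in G\} \le \Aut(T,0)$. Since translations act trivially on cohomology, the hypothesis that $G \to \GL(H^1(T,\C))$ is unipotent says exactly that $\bar G$ acts unipotently on $H^1(T,\C)$, so Lemma~\ref{lem-ann} applies to $\bar G \cto T$. Now consider the $G$-module $\hat M := T \oplus \Z$, with $g$ acting by $(x,m) \mapsto (\phi_g(x) + m a_g,\, m)$; this action is faithful. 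The role of the extra summand $\Z$ is to record the translation parts $a_g$, which are invisible on $H^1(T)$, and this is what lets us bootstrap from statements about $\bar G$ to statements about $G$ itself.

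The computational core is the behaviour of the operators $\Id - g \in \End_\Z(\hat M)$. One checks directly that $(\Id - g)(x,m) = ((\Id - \phi_g)(x) - m a_g,\, 0)$, so $\Id - g$ maps $\hat M$ into $T \oplus 0$, and on $T \oplus 0$ the operator $\Id - g$ acts as $\Id - \phi_g$ while $h$ acts as $\phi_h$. Composing, any product $h_1(\Id - g_1)\cdots h_k(\Id - g_k)$ factors through $T \oplus 0$, restricts there to $\phi_{h_1}(\Id - \phi_{g_1})\cdots \phi_{h_k}(\Id - \phi_{g_k}) \in \End(T)$, and sends $(0,m)$ to $\bigl(-m\,\phi_{h_1}(\Id - \phi_{g_1})\cdots\phi_{h_{k-1}}(\Id - \phi_{g_{k-1}})(\phi_{h_k}(a_{g_k})),\, 0\bigr)$. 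By Lemma~\ref{lem-ann} the first expression vanishes once $k \ge n$, and the second --- which carries only $k-1$ factors of the form $\Id - \phi$ --- vanishes once $k \ge n+1$. Hence $h_1(\Id - g_1)\cdots h_k(\Id - g_k) = 0$ in $\End_\Z(\hat M)$ as soon as $k \ge n+1$. Assertion~(1) is then immediate: for $g \in \Gamma_n G$, Lemma~\ref{lem-rep} applied to the $G$-module $\hat M$ writes $\Id - g$ as a finite signed sum of such products with $k \ge n+1$, all of which vanish; so $\Id - g = 0$, and by faithfulness $g = 1$. Thus $\Gamma_n G = 1$, i.e.\ $c(G) \le n$, and since $H^{(l)} = 1$ whenever $2^l > c(H)$ for any group $H$, we get $G^{(l)} = 1$ whenever $2^l > n$.

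For assertion~(2) the extra hypothesis supplies one more factor of $\Id - \phi$ for free. Indeed, if $T' \subsetneq T$ is a $G$-stable subtorus, then $a_g = g(0) \in g(T') = T'$ and $\phi_g(T') = T'$ for all $g \in G$. In the product above, the vector $\phi_{h_k}(a_{g_k})$ then lies in $T'$ and all the operators $\phi_{h_i}$, $\Id - \phi_{g_i}$ preserve $T'$, so its $(0,m)$-component is governed by the $(k-1)$-fold product $\phi_{h_1}|_{T'}(\Id - \phi_{g_1}|_{T'})\cdots\phi_{h_{k-1}}|_{T'}(\Id - \phi_{g_{k-1}}|_{T'})$ acting on $T'$. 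As $\bar G|_{T'}$ acts unipotently on $H^1(T',\C)$ --- because the restriction $H^1(T,\C) \twoheadrightarrow H^1(T',\C)$ is $G$-equivariant --- Lemma~\ref{lem-ann} applied to $T'$ shows this vanishes once $k - 1 \ge \dim T'$, hence in particular once $k \ge n$ (using $\dim T' \le n-1$). Together with the vanishing of the $T \oplus 0$-component for $k \ge n$, we obtain $h_1(\Id - g_1)\cdots h_k(\Id - g_k) = 0$ for all $k \ge n$; feeding this into Lemma~\ref{lem-rep} with $l = n-1$ gives $\Gamma_{n-1}G = 1$, whence also $G^{(l)} = 1$ whenever $2^l > n-1$.

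The one genuinely delicate point is the bookkeeping in the key computation --- specifically the fact that passing through the $\Z$-summand of $\hat M$ consumes exactly one factor of the form $\Id - \phi$. This single index shift is precisely what produces the bound $\Gamma_n G = 1$ in~(1), and it is precisely the factor one recovers when a proper $G$-stable subtorus is available, sharpening the bound to $\Gamma_{n-1}G = 1$ in~(2). Apart from this, the argument is a routine combination of the faithful linearization $G \hookrightarrow \Aut(\hat M)$ with Lemmas~\ref{lem-rep} and~\ref{lem-ann}.
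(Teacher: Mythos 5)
Your proof is correct. The computation at its heart checks out: on $\hat M = T\oplus\Z$ one indeed has $(\Id-g)(x,m)=((\Id-\phi_g)(x)-ma_g,0)$, so a $k$-fold product $h_1(\Id-g_1)\cdots h_k(\Id-g_k)$ acts on $T\oplus 0$ by the $k$-fold product of linear parts and sends $(0,m)$ to $-m\,\phi_{h_1}(\Id-\phi_{g_1})\cdots\phi_{h_{k-1}}(\Id-\phi_{g_{k-1}})\phi_{h_k}(a_{g_k})$, an expression with only $k-1$ unipotent factors; since $\Gamma$-stability of $T'$ forces $a_g\in T'$ and $\phi_g(T')=T'$, and the equivariant surjection $H^1(T)\twoheadrightarrow H^1(T')$ keeps the restricted action unipotent, Lemma~\ref{lem-ann} (applied to $T$ and to $T'$) kills both components for $k\ge n+1$ in case (1) and $k\ge n$ in case (2), and Lemma~\ref{lem-rep} applied to the faithful $G$-module $\hat M$ finishes as you say.

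The route differs from the paper's in packaging rather than in substance. The paper first enlarges $G$ to $U\ltimes\Aut^0(T')$ (adjoining the translations by $T'$, which is harmless since the translation parts lie in $T'$), handles the linear part $U$ by Proposition~\ref{p:tori_conj1a}, and controls the translation part through a separate induction, Lemma~\ref{lem-repb}, which expresses the translation component of an element of $\Gamma_lG$ as a sum of $k$-fold products with $k\ge l$ applied to elements of $T'$ --- exactly the same ``one factor is consumed by the translation part, and a proper stable subtorus buys it back'' phenomenon you isolate. You bypass both the enlargement of $G$ and the auxiliary lemma by the classical affine-to-linear trick $T\oplus\Z$, which lets Lemma~\ref{lem-rep} be applied verbatim to the whole group and makes the index shift visible in a single computation. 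What your version buys is a shorter, more self-contained group-theoretic argument (no new induction, no appeal to Proposition~\ref{p:tori_conj1a}); what the paper's version buys is an explicit, reusable description of the translation parts (Lemma~\ref{lem-repb}), which mirrors Lemma~\ref{lem-rep} and keeps the semidirect-product structure in view. Either way the numerical conclusions $\Gamma_nG=1$, resp.\ $\Gamma_{n-1}G=1$, and the derived-length bounds via $H^{(i)}=1$ for $2^i>c(H)$, agree with the paper.
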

	
	\begin{proof} [Proof of Proposition \ref{pro-triv}]
		We prove (1) and (2) altogether: when proving (1), we set $T' = T$.
		Since $H^{(i)} = 1$ provided $2^i > c(H)$ (see e.g. \cite[\S 5.1.12, Proof]{Ro})
		for any group $H$, it suffices to prove the statement for the central series.	
		We may assume that $G$ contains $\Aut^0(T')$, so that $G = U \ltimes \Aut^0(T')$ where $U \simeq \Ima(G \to \GL(H^1(T,\Z)))$, which is regarded as a unipotent subgroup of $\GL(H^1(T,\Z))$. Note that the subtorus $T'$ is $U$-stable. We need:
		
		\begin{lem}\label{lem-repb}
			For every integer $l \ge 0$, if $(u,b) \in \Gamma_lG \le (U \ltimes \Aut^0(T'))$ and if we consider $b$ as an element of $T'$, then $b$ is a finite sum of elements of the form
			$$\pm h_1(\Id - g_1) \cdots h_{k}(\Id -  g_{k})  b' \in T'$$
			for some $k \ge l$, $b' \in T'$ and $g_1,\ldots,g_{k},h_1,\ldots,h_{k} \in U$.
		\end{lem}
		
		\begin{proof} [Proof of Lemma \ref{lem-repb}]
			We prove this lemma by induction. The statement for $l = 0$ is void.
			Suppose that Lemma~\ref{lem-repb} is proven for $l$. 
			It suffices to prove the statement for elements $(\xi,\nu) \in \Gamma_{l+1}G$
			of the form
			$$(\xi,\nu) = [(u,b),(u',b')]^{\pm 1}$$	
			with $(u,b) \in \Gamma_lG$ and $(u',b')  \in G$.
			Note that since
			$(\xi,\nu)^{-1} = (\xi^{-1},-\xi^{-1}\nu)$,
			it is enough to consider
			$$(\xi,\nu) = [(u,b),(u',b')].$$
			We have
			$$[(u,b),(u',b')] = \([u,u'],uu'(\Id -u^{-1})u'^{-1}  b' + u(\Id - u')u^{-1}  b\).$$
			On the one hand, applying Lemma~\ref{lem-rep} to $u \in \Gamma_lG$ shows that
			$\Id -u^{-1}$ is a linear combination of elements of the form
			$$\pm h'_1(\Id - g'_1) \cdots h'_k(\Id -  g'_k) $$
			with $k \ge l+1$ and $g'_1,\ldots,g'_{k},h'_1,\ldots,h'_{k} \in U$, see also the beginning of the proof of Lemma \ref{lem-ann}.
			On the other hand, the induction hypothesis implies that $b$ is a  finite sum of elements of the form
			$$\pm h''_1(\Id - g''_1) \cdots h''_{k}(\Id -  g''_{k})  b'' \in T'$$
			with $k \ge l$, $b'' \in T'$ and $g''_1,\ldots,g''_{k},h''_1,\ldots,h''_{k} \in U$. 
			
			Hence Lemma~\ref{lem-repb} holds for $l+1$.
		\end{proof}
		
		We return to the proof of Proposition \ref{pro-triv}.
		If $(u,b) \in \Gamma_{n-1}G$, then $u \in \Gamma_{n-1}U = \{\Id\}$ by Proposition~\ref{p:tori_conj1a}. If $(u,b) \in \Gamma_{\dim T'}G$, then $b= 0$ by Lemmas~\ref{lem-ann} and~\ref{lem-repb}. Hence $\Gamma_nG = 1$ (without any assumption on $T'$) and $\Gamma_{n-1}G = 1$ if $T' \ne T$.
		This proves Proposition \ref{pro-triv}.
	\end{proof}

	\ssec{Proof of Theorem~\ref{thm-tuerAut0} and corollaries}
	\hfill

	\begin{proof}[Proof of Theorem~\ref{thm-tuerAut0}]
		For nilpotency class,
		we only need to prove that
		$$c \cnec \max(c_\ess(G,X), c_\vir(G|_{A_X}))  \ge c_\vir(G).$$
		Up to replacing $G$ by some finite index subgroup,
		we can assume that 
		\begin{itemize}
			\item $\Gamma_iG \subset \Aut^0(X)$ if $i \ge c_\ess(G,X)$;
			\item $c_\vir(G) = c(G)$ and $c_\vir(G|_{A_X}) = c(G|_{A_X})$;
			\item $\Gamma_mG \subset \Aut^0(X)$ if and only if $\ol{\Gamma_mG}$ is connected
			by Lemma~\ref{lem-conn}.
		\end{itemize}
		Since $\Aut^0(X) \to \Aut^0(A_X)$ has finite kernel~\cite[Theorem 5.5]{Fujiki},
		it follows that $\Gamma_cG$ is trivial.
		
		The proof for derived length is similar.
	\end{proof}

	\begin{corollary}\label{cor-kge0w}
		Let $X$ be a compact K\"ahler manifold of dimension $n \ge 1$ 
		such that $\Aut^0(X)$ is a complex torus.
		Assume Conjecture~\ref{conj-DLOZ} for $X$. 
		Then
		$$c_\vir(G) \le \dim X,$$ 
		and $c_\vir(G) = \dim X$ only if $X$ is a torus.
	\end{corollary}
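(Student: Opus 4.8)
The plan is to reduce everything to the Albanese torus via Theorem~\ref{thm-tuerAut0} and then to apply the torus estimates of \S\ref{ss-tores} together with the structure theory of subvarieties of complex tori. By Theorem~\ref{thm-tuerAut0} we have $c_\vir(G)=\max\bigl(c_\ess(G,X),\,c_\vir(G|_{A_X})\bigr)$, and since Conjecture~\ref{conj-DLOZ} is assumed for $X$ we get $c_\ess(G,X)\le \dim X-1$. So the whole statement reduces to proving $c_\vir(G|_{A_X})\le\dim X$, with equality forcing $X$ to be a torus.

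For the bound I would work with the Albanese image $Y:=a(X)\subseteq A_X$: it is an irreducible $(G|_{A_X})$-invariant subvariety that generates $A_X$ (universal property of $a$), and $\dim Y\le\dim X$. If $a$ is surjective, i.e.\ $Y=A_X$, then Proposition~\ref{pro-triv}(1) gives at once $c_\vir(G|_{A_X})\le\dim A_X=\dim Y\le\dim X$. If $a$ is not surjective, I would take the stabilizer subtorus $T_0:=\{t\in A_X: t+Y=Y\}$, which is $(G|_{A_X})$-invariant, proper, and of dimension $<\dim Y$ (as $Y$ is not a coset of a subtorus); passing to $\bar A:=A_X/T_0$, the image $\bar Y$ has trivial stabilizer, hence is of general type by the Ueno--Kawamata structure theorem for subvarieties of a complex torus, and therefore has finite automorphism group. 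Since $\bar Y$ still generates $\bar A$, the restriction $\Aut(\bar A,\bar Y)\hookrightarrow\Aut(\bar Y)$ is injective, so the image of $G|_{A_X}$ in $\Aut(\bar A)$ is finite; after a further finite-index replacement $G|_{A_X}$ acts trivially on $\bar A$ and preserves the principal $T_0$-bundle $A_X\to\bar A$. An argument modeled on the proof of Proposition~\ref{pro-triv} — using that the translation parts of $G|_{A_X}$ all lie in $T_0$, that the kernel of $G|_{A_X}\to\Aut(T_0)$ is a group of pairwise commuting unipotent ``twists'' parametrized by $\Hom(\bar A,T_0)$, and the vanishing of long products $h_1(\mathrm{id}-g_1)\cdots h_k(\mathrm{id}-g_k)$ on $T_0$ from Lemma~\ref{lem-ann} — should then give $\Gamma_{\dim T_0}(G|_{A_X})=1$, hence $c_\vir(G|_{A_X})\le\dim T_0<\dim Y\le\dim X$. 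Either way $c_\vir(G|_{A_X})\le\dim X$, and equality can occur only in the surjective case.

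For the equality case, suppose $c_\vir(G)=\dim X$; then $c_\vir(G|_{A_X})=\dim X$, so by the previous step $a$ is surjective, and comparing $\dim A_X=\dim a(X)\le\dim X$ with the inequality $\dim X=c_\vir(G|_{A_X})\le\dim A_X$ from Proposition~\ref{pro-triv}(1) forces $\dim X=\dim A_X$. Thus $a:X\to A_X$ is a generically finite surjective morphism onto the torus $A_X$, and since it is the Albanese map its ramification divisor $R$ is effective, $G$-invariant, and satisfies $\cO_X(R)\cong\omega_X$. I would then show $R=0$: otherwise the branch divisor $B\subseteq A_X$ is a non-zero reduced $(G|_{A_X})$-invariant effective divisor, and either $\cO_{A_X}(B)$ is degenerate — its associated subtorus being a proper $(G|_{A_X})$-invariant subtorus, so Proposition~\ref{pro-triv}(2) gives $c_\vir(G|_{A_X})\le\dim A_X-1$, a contradiction when $\dim A_X\ge 2$ (and if $\dim A_X=1$ then $X$ is a curve of genus $\ge 2$, so $\Aut(X)$ is finite and $c_\vir(G)=0\ne 1$) — or $\cO_{A_X}(B)$ is non-degenerate, in which case Mumford's index theorem makes it ample, $A_X$ is an abelian variety, $G|_{A_X}$ fixes an ample class and is virtually abelian by Fujiki--Lieberman, so $c_\vir(G|_{A_X})\le 1<\dim A_X$, again a contradiction. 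Hence $R=0$, $a$ is étale, and $X$, being a finite étale cover of the torus $A_X$, is a complex torus.

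The step I expect to be the main obstacle is the group-theoretic one in the second paragraph: establishing the sharp inequality $c_\vir(G|_{A_X})\le\dim T_0$, equivalently that the abelian group of unipotent twists contributes nothing to the virtual nilpotency class after passing to a suitable finite-index subgroup. This requires a careful extension argument parallel to the proof of Proposition~\ref{pro-triv}, controlling simultaneously the linear parts acting on $H^1(T_0)$, the off-diagonal twists in $\Hom(\bar A,T_0)$, and the translation parts lying in $T_0$. Everything else is a routine assembly of Theorem~\ref{thm-tuerAut0}, Proposition~\ref{pro-triv}, and the cited structural inputs (the Ueno--Kawamata theorem, the ramification formula, Fujiki--Lieberman, and Mumford's index theorem).
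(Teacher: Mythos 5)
Your reduction (Theorem~\ref{thm-tuerAut0} plus Conjecture~\ref{conj-DLOZ}, so that everything rests on $c_\vir(G|_{A_X})\le\dim X$, with equality forcing $a$ to be generically finite onto $A_X$) is exactly the paper's skeleton. In the non-surjective case your route is a viable variant of the paper's: where you quotient $A_X$ by the stabilizer torus $T_0$ of $a(X)$, kill the induced action on the general-type image (Ueno--Kawamata plus finiteness of its bimeromorphic group), and then want $\Gamma_{\dim T_0}(G|_{A_X})=1$, the paper instead keeps the Ueno fibration $a(X)\to B$ with fibers translates of $T_0$, arranges $G|_{a(X)}\le\Aut(a(X)/B)$ after a finite-index replacement, and applies the already-proved Proposition~\ref{pro-triv} to a general fiber, concluding on $a(X)$ by density and on $A_X$ by generation; both give the same bound $\dim T_0=\dim a(X)-\dim B\le\dim X$. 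The step you flag as the main obstacle is precisely the only new content of your route, and you leave it unproved. It does hold and can be established with Lemmas~\ref{lem-rep} and~\ref{lem-ann} together with a Lemma~\ref{lem-repb}-type induction (the linear parts $L$ satisfy $(\Id-L)(A_X)\subseteq T_0$ and the translation parts lie in $T_0$, so sufficiently long products of the operators $\Id-L_i$ annihilate everything), but as written it is an IOU, and the paper's fiberwise shortcut avoids having to prove it at all.

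The equality case is where your proposal has a genuine gap. You pass from $R\neq 0$ to a nonzero branch \emph{divisor} $B\subset A_X$, but the Albanese map here is only generically finite, not finite: components of $R$ may be contracted to codimension $\ge 2$ in $A_X$ (for instance, if $X$ is a blow-up of a torus along a $G$-stable codimension-two center, then $R\neq 0$ while the branch divisor is empty), and then your dichotomy on $\cO_{A_X}(B)$ has nothing to apply to; even to produce a divisorial $B$ you would need a Stein factorization and purity, and the resulting \'etale-in-codimension-one case would still have to be treated separately. The paper's argument sidesteps all of this: it takes any irreducible component $Y$ of the image of the ramification locus, of whatever codimension, translates the origin into $Y$, and applies Proposition~\ref{pro-triv}(2) to the stabilizer subtorus of $Y$, with no input from purity, Mumford's index theorem or Fujiki--Lieberman. (Also, when you invoke Proposition~\ref{pro-triv}(2) in your ``degenerate'' case, its hypothesis requires the subtorus to be $G$-stable as a subset of $A_X$ -- in particular the translation parts of $G|_{A_X}$ must lie in it -- a point your write-up does not verify.)
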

	
	\begin{proof}
		For the first statement, by Theorem~\ref{thm-tuerAut0} it suffices to prove that
		$$c_\vir(G|_{A_X})  \le \dim X.$$
		
		Let $a: X \to A_X$ be the Albanese map.
		Note that for every $H \le \Aut(X)$,
		since $a(X)$ generates $A_X$, 
		$H \cto a(X)$ is trivial if and only if $H \cto A_X$ is trivial.
		As $a(X)$ is a torus bundle  over a general type variety $B$,
		we can assume that $G|_{a(X)} \le \Aut(a(X)/B)$.
		Applying Proposition~\ref{pro-triv} to a general fiber of $a(X) \to B$ shows that 
		$$c_\vir(G|_{A_X}) \le \dim a(X) - \dim B \le \dim X,$$
		and we have equality only if $a : X \to A_X$ is generically finite and surjective.
		
		Assume that $X$ is not a complex torus. 
		Then the ramification locus $R \subset X$ of $a_X$ is non-empty. 
		Let $Y \subseteq A_X$ be an irreducible component of $R$. 
		Up to replacing $G$ by a finite-index subgroup of it, 
		we can assume that $Y$ is $G$-stable. 
		We can also assume that the origin $0_X \in A_X$ of $A_X$ is contained in $Y$. 	
		Now let $T$ be the identity connected component of 
		$$\{t \in  A_X \, | \, t + Y = Y\}.$$
		Then $T$ is a subtorus of $A_X$, which is moreover $G$-stable as $Y$ is.
		Since $T \subset Y \subsetneq A_X$,
		by Proposition~\ref{pro-triv} we have
		$$c_\vir(G|_{A_X}) \le c(G|_{A_X}) \le \dim A_X - 1 = \dim X - 1.$$ 
		Hence $c_\vir(G) \le  \dim X - 1$ by Theorem~\ref{thm-tuerAut0}.
	\end{proof}

	\begin{proof}[Proof of Corollary~\ref{cor-lvir}]
		
		A similar argument as in the proof of Corollary~\ref{cor-kge0w} shows that
		$\ell_\vir(G) \le \dim X$ and the equality holds only if $a : X \to A_X$ is generically finite and surjective.
		By Proposition~\ref{pro-triv} we have
		$$\ell_\vir(G|_{A_X}) \le \ell(G|_{A_X}) \le \lfloor \log_2 \dim A_X \rfloor + 1 = \lfloor \log_2 n \rfloor + 1.$$ 
		It follows from Theorem~\ref{thm-tuerAut0} that
		$$\ell_\vir(G) = \max\(\ell_\ess(G,X), \ell_\vir(G|_{A_X})\) \le \max\(n-1, \lfloor \log_2 n \rfloor + 1\),$$
		and the main statement of Corollary~\ref{cor-lvir} follows.
		For the optimality when $n =2$,
		there exists a group action $G \cto X$ on a surface $X$ such that $c_\vir(G) = 2$~\cite[\S4.2]{DLOZI}, 
		and thus $\ell_\vir(G) = 2$. 
	\end{proof}
	
	\ssec{Upper bounds with Kodaira dimension}
	\hfill
	
	\begin{cor}\label{cor-basefix}
		Let $\phi : Z \to Y$ be a proper surjective morphism with connected fibers between compact K\"ahler manifolds and let $G \le \Aut(Z/Y)$ be a zero entropy subgroup. Assume that $\Aut^0(Z)$ is a complex torus and $\gk(F) \ge 0$ where $F$ is a very general fiber of $\phi$. Then
		
		\begin{enumerate}
			\item There exists a finite-index subgroup $G' \le G$ such that $\ell(G') \le \dim F$.
			
			\item Assume that Conjecture~\ref{conj-DLOZ} is true for every compact K\"ahler manifold of dimension $= \dim F$ and Kodaira dimension $\gk = 0$. Assume that $\gk(F) = 0$,
			then there exists a finite-index subgroup $G' \le G$ such that $c(G') \le \dim F$. 	
		\end{enumerate}
	\end{cor}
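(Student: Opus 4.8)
The plan is to pass to a very general fibre $F$ of $\phi$, transport the two assertions to $F$ via Corollaries~\ref{cor-lvir} and~\ref{cor-kge0w}, and check that the restriction $G\to\Aut(F)$ is injective, so that the virtual invariants of $G$ and of $G|_F$ literally coincide. So first I would fix a very general $b\in Y$ and set $F:=Z_b$: by generic smoothness $F$ is a connected compact K\"ahler manifold, with $\gk(F)\ge 0$ (resp.\ $\gk(F)=0$) by hypothesis, hence not uniruled, so $\Aut^0(F)$ is a complex torus by Lemma~\ref{l:non-uni_aut}. Since $G\le\Aut(Z/Y)$, every $g\in G$ maps $F$ to itself, giving a homomorphism $r\colon G\to\Aut(F)$, $g\mapsto g|_F$; and since $F$ is a closed $g$-invariant subset of $Z$, $h_\topp(g|_F)\le h_\topp(g)=0$, so $G|_F:=r(G)$ is a zero entropy subgroup of $\Aut(F)$.

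The heart of the argument — and the step I expect to be the main obstacle — is the injectivity of $r$ for $b$ very general. For a single $g\ne\id$ in $\Aut(Z/Y)$, the fixed locus $\Fix(g)\subsetneq Z$ is a proper analytic subset, and since the fibres of $\phi$ have constant dimension $\dim F$ over a dense Zariski-open of $Y$, the locus $V_g:=\{b'\in Y: Z_{b'}\subseteq\Fix(g)\}=\{b': g|_{Z_{b'}}=\id\}$ is a proper analytic subset of $Y$; if $G$ were countable we could simply take $b$ outside $\bigcup_{g\ne\id}V_g$. To handle a possibly uncountable $G$ I would argue that $G_0:=G\cap\Aut^0(Z)$ is contained in $T_1:=\Aut^0(Z)\cap\Aut(Z/Y)$, that $T_1$ acts faithfully on the generic fibre $Z_\eta$ (because $Z_\eta$ is dense in $Z$) and hence, by spreading out together with the fact that the complex torus $\Aut^0(Z)$ has only countably many subtori and torsion points, acts faithfully on $Z_{b'}$ for $b'$ outside a countable union of proper analytic subsets of $Y$; and that $G/G_0$ is countable since $\Aut(Z)/\Aut^0(Z)$ is~\cite{FujikiCount}. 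Combining these, for $b$ very general $G_0$ acts faithfully on $Z_b$ while every coset of $G_0$ other than $G_0$ itself avoids $T_1|_{Z_b}$, so $\ker(r)\subseteq G_0$ and $\ker(r)=\{\id\}$; thus $r$ is an isomorphism onto $G|_F$.

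Granting this, $G\cong G|_F$, hence $\ell_\vir(G)=\ell_\vir(G|_F)$ and $c_\vir(G)=c_\vir(G|_F)$. For (1), applying Corollary~\ref{cor-lvir} to $(F,G|_F)$ — legitimate since $\Aut^0(F)$ is a complex torus — gives $\ell_\vir(G|_F)\le\dim F$, i.e.\ a finite-index $G'\le G$ with $\ell(G')\le\dim F$. For (2), assuming Conjecture~\ref{conj-DLOZ} for compact K\"ahler manifolds of dimension $\dim F$ with $\gk=0$, and that $\gk(F)=0$, Corollary~\ref{cor-kge0w} applied to $(F,G|_F)$ gives $c_\vir(G|_F)\le\dim F$, i.e.\ a finite-index $G'\le G$ with $c(G')\le\dim F$. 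All the real content is in the injectivity of $r$; the rest is bookkeeping.
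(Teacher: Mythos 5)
Your reduction to the fibre is fine, but the step you yourself flag as the heart of the argument --- injectivity of the restriction $r\colon G\to\Aut(F)$ for a very general fibre --- is false in general, so the proof has a genuine gap. Concretely, let $E$ be an elliptic curve, $Z=E\times E$, $Y=E$, $\phi=\mathrm{pr}_1$, and
$$G=\bigl\{(x,y)\mapsto (x,\,y+nx+c)\ :\ n\in\Z,\ c\in E\bigr\}\ \le\ \Aut(Z/Y).$$
This is an (abelian, uncountable) zero entropy subgroup, $\Aut^0(Z)=Z$ is a complex torus, and $\gk(F)=0$. For \emph{every} $b\in Y$ the element $(x,y)\mapsto(x,\,y+x-b)$ is a nontrivial element of $G$ acting trivially on $Z_b$, so $\ker(r)\cong\Z$ for every fibre; there is no very general $b$ making $r$ injective. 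The place where your argument breaks is the coset reduction: faithfulness of $T_1=\Aut^0(Z)\cap\Aut(Z/Y)$ on a very general fibre (which is correct, via the countably many closed subgroups of a torus) only tells you that \emph{at most one} element of a given coset $gG_0$ kills a given very general fibre; it does not prevent that element from existing, and in an uncountable coset the offending element can vary with $b$ and sweep out all of $Y$, exactly as in the example (the coset of $(x,y)\mapsto(x,y+x)$). So ``$\ker(r)\subseteq G_0$'' is unjustified and in fact wrong.

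The paper avoids any injectivity claim. It first replaces $G$ by its Zariski closure $\overline G$ in $\Aut(Z/Y)$, which by Lemma~\ref{lem:basefix} (compactness of $\Aut^0(Z)$ plus Mal'cev/Hall) has only countably many finite-index subgroups; a bound for $\overline G$ descends to $G$ by intersecting. For each very general fibre $F$ it applies Corollary~\ref{cor-lvir} (resp.\ Corollary~\ref{cor-kge0w}) to the \emph{image} $G|_F$ to get a finite-index subgroup $G'_F\le G$ with $\ell\bigl(G'_F|_F\bigr)\le\dim F$, and then uses the countability to choose a single finite-index $G'$ that works for \emph{all} very general fibres simultaneously. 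The conclusion only needs that $G'^{(\dim F)}$ (resp.\ $\Gamma_{\dim F}G'$) restricts to the identity on a dense family of fibres, hence equals $\{\Id_Z\}$ --- i.e.\ it suffices that the intersection of the kernels over very general fibres is trivial, which is much weaker than triviality of any single kernel. If you want to repair your write-up, this is the mechanism to adopt; a pointwise isomorphism $G\cong G|_F$ is simply not available.
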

	
	\begin{proof}
		It suffices to prove the corollary for the closure $\ol{G} \le \Aut(Z/Y)$ of $G$. Indeed, if $\ol{G}' \le \ol{G}$ is a finite-index subgroup such that $\ell(\ol{G}') \le \dim F$ (resp. $c(\ol{G}') \le \dim F$), then $G' = \ol{G}' \cap G$ is a finite-index subgroup of $G$ such that $\ell(G') \le \dim F$ (resp. $c(G') \le \dim F$). Therefore, up to replacing $G$ by $\ol{G}$, we can assume that $G = \ol{G}$.
		
		Let $F$ be a very general fiber of $\phi : Z \to Y$. As the $G$-action on $Z$ is of zero entropy, so is its restriction to $F$. 
		Since $\gk(F) \ge 0$,
		by Corollary~\ref{cor-lvir} there exists a finite-index subgroup $H_F \le G|_F$ such that $\ell(H_F) \le \dim F$. The pre-image $G'_F \le G$ of $H_F$ under the surjective homomorphism $G \to G|_F$ is a subgroup of $G$ of finite index. As $\Aut^0(Z)$ is compact and $G = \ol{G}$ by assumption, Lemma~\ref{lem:basefix} implies that the finite-index subgroup $G' \cnec G'_F \le G$ is independent of the very general choice of the fiber $F$ of $\phi : Z \to Y$. It follows that $G'^{(\dim F)}|_F = \{\Id_F\}$ whenever $F$ is a very general fiber of $\phi$, so $G'^{(\dim F)} = \{\Id_Z\}$, which proves the first statement.
		
		To prove the second statement, we repeat the argument in the previous paragraph with Corollary~\ref{cor-lvir} replaced by Corollary~\ref{cor-kge0w}.
	\end{proof}
	
	\begin{proof}[Proof of Corollary~\ref{t:gen_fiberDk}]
		
		We may assume that $\kappa(X) \ge 0$.
		Then $X$ is not uniruled, so $\Aut^0(X)$ is a complex torus by 
		Lemma~\ref{l:non-uni_aut}.

		Let $\tau : X \dashrightarrow B_0$ be an Iitaka fibration defined by $|mK_X|$ where $m$ is a sufficiently large and divisible
		integer. By \cite[Theorem 14.10]{Ueno} (for projective case) and \cite[Cor.\,2.4]{NZ} (for K\"ahler case), there is a $G$-action on $B_0$ such that $G|_{B_0}$ is finite and $\tau$ is $G$-equivariant (in the sense of Lemma~\ref{l:resolution}).
		Replacing $G$ by a finite-index subgroup of it, we may assume that $G$ acts trivially on $B_0$.
		
		Let $B \to B_0$ be a desingularization of $B_0$. By Lemma~\ref{l:resolution},  we obtain a $G$-equivariant morphism $X' \to X$ from a compact K\"ahler manifold $X'$ such that the composition $f : X' \to X \dto B$ is a morphism and each fiber of $f$ is $G$-stable. Therefore, we can identify $G$ as a zero entropy subgroup of  $\Aut(X'/B)$. As the morphism $f:X' \to B$ is bimeromorphic to the Iitaka fibration $\tau$ and $m \gg 1$, we have $\dim B = \kappa(X)$ and a general fiber $F$ of $X' \to B$ is a connected compact K\"ahler manifold with $\gk(F) = 0$ (see~\cite[Lemma 5.6 and Proposition 5.7]{Ueno} for the connectedness of $F$). Then, by  Corollary~\ref{cor-lvir} and  Corollary~\ref{cor-basefix}, 
		there is a finite-index subgroup $G' \le G$ such that
		$\ell(G') \le \dim F = \dim X - \gk(X)$.
		This proves Corollary~\ref{t:gen_fiberDk}.
	\end{proof}
	
	\begin{proof}[Proof of Theorem~\ref{thm-kge0}]
		
		We have proven (3) in Corollary~\ref{cor-kge0w}.
		To prove (1) and (2),
		as we did in the proof of Corollary~\ref{t:gen_fiberDk}, 
		we may assume that $\kappa(X) \ge 0$,
		and there exist a $G$-equivariant bimeromorphic morphism $X' \to X$ and a $G$-equivariant surjective morphism $f : X' \to B$ such that $f$ is an Iitaka fibration of $X'$, 
		together with some finite-index subgroup $G' \le G$ acting trivially on $B$.  
		Therefore we can identify $G'$ as a zero entropy subgroup of $\Aut(X'/B)$.
		Since	$\dim B = \gk(X)$ and a general fiber $F$ of $f$ is connected with $\gk(F) = 0$,  we conclude by Corollary~\ref{cor-basefix} that there exists a finite-index subgroup $G'' \le G'$ such that
		$$c(G'') \le \dim F = \dim X - \gk(X).$$
		If $c_\vir(G) = \dim X - \gk(X) = \dim F$, then $F$ is a complex torus by Corollary~\ref{cor-kge0w}.
		This proves (1) and (2) of Theorem~\ref{thm-kge0}.
	\end{proof}
	
	\begin{remark}
		
		Theorem~\ref{thm-kge0}
		also implies that in order to prove
		Conjecture~\ref{conj-DLOZ} for all compact K\"ahler manifolds $X$ 
		such that $\gk(X) \ge 0$, 
		it suffices to prove it for all $X$ such that $\gk(X) = 0$.
		Still more precisely, 
		Conjecture~\ref{conj-DLOZ} for all compact K\"ahler manifolds $X$ 
		such that $\dim X \le m$ and $\gk(X) = 0$,
		implies  Conjecture~\ref{conj-DLOZ} for all $X$ such that $\dim X \le m + 1$ and $\gk(X) > 0$.
		This follows from the inequality $c_\vir(G) \ge c_\ess(G,X)$.
		With this remark, we can also weaken the assumption $\gk(X) = 0$ (resp. $\gk(F) = 0$)
		in Corollary~\ref{cor-kge0w} (resp. Corollary~\ref{cor-basefix}.(2)),
		to $\gk(X) \ge 0$ (resp. $\gk(F) \ge 0$). 
	\end{remark}

	\section{Zero entropy subgroups with large essential nilpotency class}
	\label{s:sing_proj}

	In the last section, we study Conjecture~\ref{conj:main2}, namely the geometry of
	$X$ admitting a zero entropy subgroups $G \le \Aut(X)$ 
	satisfying 
	$$c_\ess(G,X) = \dim X - 1,$$ 
	which is the conjectural upper bound in Conjecture~\ref{conj-DLOZ}.
	Under the assumption that $\gk(X) = 0$ and $b_1(X) \ne 0$, 
	we expect that $X$ is bimeromorphic to a Q-torus. 
	
	We use standard notation in ~\cite{KM} for singularities of varieties. We denote by $K_X$ and $q(X) := \dim H^1(X, \OO_X)$ the canonical Weil divisor and irregularity of a projective variety $X$.
	
	\ssec{An example}
	
	Let us first construct some examples which are birational to Q-tori, related to Conjecture~\ref{conj:main2}. 
	Note that, however, $b_1 = 0$
	in these examples, as one of the referees pointed out to us 
	(cf. Conjecture~\ref{conj:main2} and the remark after that).
	
	Let $n \ge 2$. Let $E_{\omega}$ be an elliptic curve with period $\omega = (-1 + \sqrt{-3})/2$ a primitive third root of unity. Let
	$$\overline{X}_n := E_{\omega}^{n}/\langle -\omega I_n \rangle ,$$
	$\pi: E_{\omega}^{n} \to \overline{X}_n$ the quotient map, and $X_n \to \overline{X}_n$ the blow-up along the maximal ideals of all singular points of $\overline{X}_n$.
	Then $X_n$ is a smooth projective variety and the action of $G \cnec U(n, \Z)$ on $E_{\omega}^{n}$ descends to a faithful {\it biholomorphic} action on both $\overline{X}_n$ and $X_n$. 
	As the $G$-action on $E_{\omega}^{n}$ has zero entropy,
	so do the $G$-actions on $\overline{X}_n$ and $X_n$.
	
	\begin{proposition}\label{prop53}
		For the group actions $G \cto X_n$ and
		$G \cto \overline{X}_n$ defined above  for $n \ge 2$, we have
		$$c_\ess(G,X_n) = c_\ess(G,\ol{X}_n) = n-1$$
		(see Definition~\ref{def:ess_sing}).
		Furthermore,
		\begin{itemize}
			\item[(1)]
			$\overline{X}_n$ has only klt singularities, 
			$K_{\overline{X}_n} \sim_{\Q} 0$ and
			irregularity $q(X_n) = q(\overline{X}_n) = 0$.
			\item[(2)]
			If $n \ge 6$, then $\overline{X}_n$ has only canonical singularities and $\kappa(X_n) = 0$.
		\end{itemize}
	\end{proposition}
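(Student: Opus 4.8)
The plan is to transfer everything to the abelian variety $E_\omega^n$, where the $G = U(n,\Z)$-action is transparent, and then to read off the singularities of $\ol X_n$ from the fixed-point data of $\langle -\omega I_n\rangle$ on $E_\omega^n$. For the essential nilpotency class I would first note that $X_n \to \ol X_n$ and the quotient map $\pi : E_\omega^n \to \ol X_n$ are both $G$-equivariant dominant morphisms of normal projective varieties of equal dimension, so two applications of Lemma~\ref{l:Aut_0-ext} give $c_\ess(G,X_n) = c_\ess(G,\ol X_n) = c_\ess(G,E_\omega^n)$. Since $G$ acts on $E_\omega^n = \C^n/\Z[\omega]^n$ by group automorphisms fixing the origin we have $G \cap \Aut^0(E_\omega^n) = 1$, and $G$ acts unipotently on $H^1(E_\omega^n,\C)$, hence on all of $H^\bullet(E_\omega^n,\C)$; thus $G$ itself serves as the auxiliary finite-index subgroup in the definition of $c_\ess$ (using Definition~\ref{def:ess_sing} and its compatibility with the definition in the introduction), giving $c_\ess(G,E_\omega^n) = c(G) = c(U(n,\Z)) = n-1$, the last equality being the classical nilpotency class of the upper unitriangular group over a nonzero ring (its $(n-1)$-st lower central term contains $I + \Z E_{1n} \neq 1$ since $n \ge 2$, and its $n$-th term is trivial).

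For part (1), I would use that $\ol X_n$, as a finite quotient of a smooth variety, has quotient singularities, which are klt (indeed $\Q$-factorial) and rational; rationality gives $q(X_n) = q(\ol X_n) = h^1(E_\omega^n,\OO)^{\langle -\omega I_n\rangle}$, and since $-\omega I_n$ acts on $H^1(E_\omega^n,\OO) = \ol{H^0(E_\omega^n,\Omega^1)}$ by the scalar $\ol{-\omega} = -\omega^2$ with $1 + \omega^2 = -\omega \neq 0$, there are no invariants, so $q(X_n) = q(\ol X_n) = 0$ (equivalently $b_1(\ol X_n) = 0$). For $K_{\ol X_n} \sim_\Q 0$, a one-variable computation of the norms $N(c-1)$ for $c$ running over $-\omega,\ \omega^2,\ -1,\ \omega,\ -\omega^2$ shows that every non-trivial element of $\langle -\omega I_n\rangle$ has only finitely many fixed points on $E_\omega^n$; for $n \ge 2$ these have codimension $\ge 2$, so $\pi$ is étale in codimension one, $\pi^* K_{\ol X_n} = K_{E_\omega^n} \sim 0$, and pushing forward (or using the norm map on $\Pic$) gives $6K_{\ol X_n} \sim 0$.

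For part (2), assuming $n \ge 6$, I would apply the Reid--Tai criterion at each singular point of $\ol X_n$. Each such point is the image of a fixed point of some power of $-\omega I_n$, and analytically the germ is $\C^n/H$ with $H \le \langle -\omega I_n\rangle$ acting by the scalar equal to the differential of the relevant power of $-\omega I_n$. At the image of the origin $H = \mu_6$ acts by the scalars $\zeta_6^{\,j}I_n$; the element of least age is $\zeta_6 I_n$, of age $n/6$, so this germ is canonical exactly when $n/6 \ge 1$. At the remaining singular points the stabilizer is $\mu_2$ (scalar $-1$, least age $n/2$) or $\mu_3$ (scalars $\omega^{\pm 1}$, least age $n/3$), and those ages exceed $1$ once $n \ge 6$; hence $\ol X_n$ has only canonical singularities for $n \ge 6$, and the threshold is sharp. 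Then, writing $f : X_n \to \ol X_n$, canonicity gives $K_{X_n} = f^* K_{\ol X_n} + \sum_i a_i E_i$ with all $a_i \ge 0$ and $E_i$ $f$-exceptional, so $\kappa(X_n) = \kappa(\ol X_n, K_{\ol X_n}) = 0$ by part (1); alternatively $K_{X_n}\sim_\Q\sum_i a_i E_i \ge 0$ gives $\kappa(X_n) \ge 0$, while the dominant generically finite rational map $E_\omega^n \dashrightarrow X_n$ gives $\kappa(X_n) \le \kappa(E_\omega^n) = 0$.

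I expect the main obstacle to be the singularity bookkeeping in part (2): enumerating the cyclic groups that occur as stabilizers, checking that each local action is genuinely scalar so that the Reid--Tai ages are as stated, and thus pinning the canonicity threshold to exactly $n = 6$ (and, along the way, confirming that blowing up these scalar quotient singularities does resolve $\ol X_n$, so that $X_n$ is indeed smooth). Everything else is formal, relying on the lemmas already proved and on standard properties of quotients of abelian varieties.
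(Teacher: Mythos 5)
Your proposal is correct and follows essentially the same route as the paper's proof: transfer of $c_\ess$ through the $G$-equivariant maps $E_\omega^n \to \ol X_n$ and $X_n \to \ol X_n$ via Lemma~\ref{l:Aut_0-ext} together with $c(U(n,\Z))=n-1$, \'etaleness in codimension one (isolated fixed loci of scalar multiplications) for klt, $K_{\ol X_n}\sim_\Q 0$ and $q=0$, and the Reid--Tai age computation plus $K_{\ol X_n}\sim_\Q 0$ for canonicity and $\kappa(X_n)=0$. You merely spell out details the paper delegates to citations (e.g.\ the $\mu_2$- and $\mu_3$-stabilizer ages and the norm computation of fixed points), which is fine.
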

	
	\begin{proof}
		Since the action of $c_\ess(G,E_{\omega}^n) = n-1$,
		it follows from Lemma~\ref{l:Aut_0-ext} that
		$c_\ess(G,X_n) = c_\ess(G,\ol{X}_n) = n-1$.
		Since $E_{\omega}^{n}$ has only isolated
		$\langle -\omega I_n \rangle$-fixed points,
		$\pi$ is \'etale in codimension $1$ because $n \ge 2$. 
		This and $K_{E_{\omega}^{n}} \sim 0$ imply the first two parts of (1) (cf.~\cite[Proposition 5.20]{KM}); thus
		$q(X_n) = q(\overline{X}_n)$ where the latter is zero since $E_{\omega}^{n}$ has no $\langle -\omega I_n \rangle$-invariant $1$-form. This proves (1).
		For (2), when $n \ge 6$, it is known that every fixed point of $-\omega I_n$
		has the so called age equal to $n/6 \ge 1$, hence the quotient variety $\overline{X}_n$
		has only canonical singularities; thus $\kappa(X_n) = 0$ because $K_{\overline{X}_n} \sim_{\Q} 0$.
	\end{proof}
	
	\ssec{Case where $K_X \sim_{\Q} 0$}
	
	The following result could be regarded as a piece of evidence supporting Conjecture~\ref{conj:main2}.
	
	\begin{pro}\label{t:main_3-long}
		Let $X$ be a normal projective variety of dimension $n \ge 1$ with only
		Kawamata log terminal (klt) singularities. 
		Let $G \le \Aut(X)$ be a zero entropy subgroup and $c_{\rm ess}(G, X) = n-1$.
		Assume the following conditions.
		\begin{itemize}
			\item[(i)]
			$K_X \sim_{\Q} 0$.
			\item[(ii)] $q(X) > 0$ (i.e., $b_1(X) \ne 0$ when $X$ is smooth).
			\item[(iii)] Conjecture \ref{conj-DLOZ} holds for all projective manifolds $Y$ with $\kappa(Y) = 0$, $q(Y) = 0$ and $\dim Y \le n -1$.
		\end{itemize}
		
		Then the following assertions hold.
		\begin{itemize}
			\item[(1)]
			There is a finite \'etale Galois cover $A \to X$ from an abelian variety $A$ onto $X$
			such that $G$ lifts to $\widetilde{G} \le \Aut(A)$
			with $\widetilde{G}/\Gal(A/X) = G$.
			\item[(2)]
			$c_{{\rm ess}}(\widetilde{G}, A) = \dim\, A -1$.
		\end{itemize}
	\end{pro}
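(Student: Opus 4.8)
\medskip

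The plan is to reduce the problem to the structure theory of klt varieties with $K_X \sim_\Q 0$, combined with the fact that the essential nilpotency class being maximal forces the Albanese map to be, up to finite covers, an isomorphism. First I would invoke the theory of varieties with numerically trivial canonical class: by the work on the Beauville--Bogomolov decomposition in the klt setting (e.g. the results of Greb--Guenancia--Kebekus and Druel, building on Kawamata's theorem for the abelian part), since $X$ has klt singularities and $K_X \sim_\Q 0$, there is a finite \'etale-in-codimension-one quasi-\'etale cover $X'' \to X$ which splits as a product of an abelian variety, simply connected varieties with trivial canonical class (Calabi--Yau type and holomorphic-symplectic type), and rationally connected pieces are excluded since $\kappa = 0$. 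The condition $q(X) > 0$ guarantees the abelian factor is nontrivial. The key point then is to rule out all factors other than the abelian one.

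\medskip

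The mechanism to rule out the non-abelian factors is the maximality hypothesis $c_{\rm ess}(G,X) = n-1$ together with Conjecture~\ref{conj-DLOZ} applied in dimension $\le n-1$. More precisely: after replacing $G$ by a finite-index subgroup and passing to the $G$-equivariant cover (here one must be careful that $G$ lifts, which requires replacing $G$ by a subgroup and the cover by the one classifying the relevant subgroup of $\pi_1$; the lift $\widetilde G$ then sits in an extension with $\Gal$), suppose the product decomposition is $A_0 \times W$ with $A_0$ abelian of dimension $q$ and $W$ of dimension $n - q$ with $\kappa(W) = 0$, $q(W) = 0$. The automorphism group of such a product, up to finite index, respects the decomposition (the abelian factor is intrinsic as it is the Albanese fibre structure), so $G$ acts on $A_0$ and on $W$ separately, and $c_{\rm ess}(G, A_0 \times W) \le c_{\rm ess}(G, A_0) + c_{\rm ess}(G, W)$. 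By Proposition~\ref{p:tori_conj1a} we have $c_{\rm ess}(G, A_0) \le q - 1$, and by assumption (iii), i.e. Conjecture~\ref{conj-DLOZ} in dimension $n - q < n$, we have $c_{\rm ess}(G, W) \le n - q - 1$ when $n - q \ge 1$. This gives $c_{\rm ess}(G,X) \le n - 2$ unless $n - q = 0$, i.e. unless $W$ is a point. Hence $X''$ is abelian, and pulling back the very-general-fibre analysis through the Albanese one concludes that $X$ itself is, up to a finite \'etale Galois cover, an abelian variety $A$; this proves (1).

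\medskip

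For assertion (2), once $A \to X$ is finite \'etale Galois with group $N := \Gal(A/X)$ and $\widetilde G \le \Aut(A)$ satisfies $\widetilde G / N \cong G$, I would argue that $c_{\rm ess}(\widetilde G, A) = c_{\rm ess}(G, X)$. One inequality is automatic from functoriality of the $\NS_\R$-action under the pullback $\NS_\R(X) \hookrightarrow \NS_\R(A)$ (this is the content used in Lemma~\ref{l:Aut_0-ext} and Definition~\ref{def:ess_sing}): since $N$ acts trivially on the image $\NS_\R(X) \subseteq \NS_\R(A)$ up to finite index, $\widetilde G |_{\NS_\R(A)}$ and $G|_{\NS_\R(X)}$ have, after passing to finite-index subgroups, the same image, whence equal nilpotency class; alternatively invoke Lemma~\ref{l:Aut_0-ext} directly for the $\widetilde G$-equivariant generically finite map $A \to X$. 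Therefore $c_{\rm ess}(\widetilde G, A) = n - 1 = \dim A - 1$.

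\medskip

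The main obstacle I expect is the \emph{equivariance and lifting bookkeeping}: the Beauville--Bogomolov-type cover in the klt/singular setting is a priori only quasi-\'etale, and one must upgrade to a genuinely \'etale cover (using that $X$ has at worst canonical — here klt plus $K_X \sim_\Q 0$ — singularities and the cover unramifies over the smooth locus, together with purity/Nagata), and then arrange that the finite group $G$ actually lifts to the cover after replacing $G$ by a finite-index subgroup — this is a standard but delicate argument with the exact sequence $1 \to \pi_1(A) \to \pi_1^{\rm orb} \to G \to 1$ and finiteness of $\pi_1$ of Calabi--Yau factors. The second, subtler obstacle is justifying that, up to finite index, $\Aut$ respects the product decomposition so that the subadditivity $c_{\rm ess}(G, A_0 \times W) \le c_{\rm ess}(G,A_0) + c_{\rm ess}(G,W)$ is available; this uses that the abelian factor is canonically the fibre of the Albanese and that $q(W) = 0$ forces $\Aut(A_0 \times W)^0$ and the permutation of factors to be controlled.
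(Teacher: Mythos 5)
Your core mechanism is the same as the paper's: pass to a finite cover splitting as (abelian variety) $\times$ (factor with $q=0$, $\kappa=0$), bound the nilpotency class on the abelian factor by Proposition~\ref{p:tori_conj1a} and on the other factor by hypothesis (iii) applied to an equivariant resolution, and use $c_{\rm ess}(G,X)=n-1$ (transported to the cover by Lemma~\ref{l:Aut_0-ext}) to force the $q=0$ factor to be a point; assertion (2) then follows exactly as you say, from Lemma~\ref{l:Aut_0-ext} applied to the $\widetilde G$-equivariant map $A\to X$. (Incidentally, the bound you need on the product is the elementary $c(H)\le\max\{c(H_1),c(H_2)\}$ for $H\le H_1\times H_2$ with surjective projections, which the paper uses; your additive bound happens to give the same number $n-2$ here.)

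The genuine gap is in the foundational step producing the cover, where you deviate from the paper's Proposition~\ref{p:lift_act} (Nakayama--Zhang). First, the Beauville--Bogomolov-type decomposition for klt varieties with $K\sim_\Q 0$ only yields a \emph{quasi-\'etale} cover, and your proposed upgrade to an \'etale cover via ``purity/Nagata'' fails: the ramification of a quasi-\'etale cover sits over the singular locus in codimension $\ge 2$, precisely where purity says nothing, and klt Calabi--Yau quotients (such as the paper's own example $E_\omega^n/\langle-\omega I_n\rangle$) admit quasi-\'etale abelian covers that are not \'etale. Assertion (1) demands an honestly \'etale Galois cover by an abelian variety, and the paper obtains \'etaleness not by upgrading a quasi-\'etale cover but by a different construction (Albanese closure plus isogeny base changes as in \cite[Theorem B]{NZ}), in which every intermediate cover is \'etale by construction and the quotient singularities are absorbed into the weak Calabi--Yau factor $S$ rather than resolved by the cover. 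Second, assertion (1) requires the \emph{full} group $G$ to lift, with $\widetilde G/\Gal(A/X)=G$; your plan to ``replace $G$ by a finite-index subgroup and the cover by the one classifying the relevant subgroup of $\pi_1$'' only proves a weaker statement. In the paper this is handled because the covers in Proposition~\ref{p:lift_act} are canonical (Albanese closure, multiplication maps on the Albanese torus), hence preserved by all of $\Aut$, and one takes \emph{all} lifts of each element; the Galois closure at the end keeps these properties. Finally, a small point: hypothesis (iii) is stated for smooth $Y$, so you must pass to an equivariant resolution of the $q=0$ factor and use the birational invariance of $c_{\rm ess}$ (Definition~\ref{def:ess_sing}), as the paper does explicitly.
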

	
	We prepare a bit for Proposition~\ref{t:main_3-long}.
	Let $X$ be a projective variety and $\sigma: X' \to X$ a projective resolution. Define the
	{\it Kodaira dimension} of $X$ as $\kappa(X) := \kappa(X')$ and the {\it Albanese map}
	of $X$ as
	$$\alb_X : X \, \overset{\sigma^{-1}}\ratmap \, X' \overset{\alb_{X'}}\longrightarrow \Alb(X') =: \Alb(X) \,.$$
	Our $\kappa(X)$ and $\Alb(X)$ do not depend on the choice of a resolution of $X$, see e.g. \cite[ Corollary \,6.4, Proposition 9.12]{Ueno}.
	
	For a surjective morphism $\pi: X \to Y$ of varieties,
	a subgroup $\widetilde{G}$ of $\Aut(X)$
	(resp. $\Bir(X)$) is a {\it lifting} of a subgroup $G$ of $\Aut(Y)$ (resp. $\Bir(Y)$)
	if there is a surjective homomorphism $\sigma :
	\widetilde{G} \to G$ such that
	$$\pi(\tilde{g} (x)) = \sigma(\tilde{g}) (\pi(x))$$
	for every $\tilde{g}$ in $\widetilde{G}$ and every closed point (resp. every general point) $x$ in $X$.
	
	Let $S$ be a normal projective variety.
	The variety $S$ is called {\it weak Calabi-Yau} in the sense of \cite[\S 1.2]{NZ}, if $S$ has only canonical singularities,
	a canonical divisor $K_S \sim_\Q 0$ and
	$$q^{\max}(S) := \max \big\{q(S') \, | \, S' \to S \,\, \text{\rm is finite \'etale} \big\} = 0 \,.$$
	
	\begin{proposition} (cf.~\cite[Theorem B]{NZ}) \label{p:lift_act}
		Let $W$ be a normal projective variety
		with the property
		\begin{equation*} \tag{$\dagger$}
			\text{$W$ has only klt singularities and $K_W \sim_{\Q} 0$}.
		\end{equation*}
		Then there are an abelian variety $A$ with $\dim A \ge \dim \Alb(W)$, a weak Calabi-Yau variety $S$ and a finite \'etale morphism $\tau : S \times A \to W$ such that for every
		$G \le \Aut(W)$, there is a lifting
		$\widetilde{G} \le \Aut(S \times A) = \Aut(S) \times \Aut(A)$
		of $G$.  In particular, we have
		$\widetilde{G} \le G_S \times G_A$, where $G_S \le \Aut(S)$ (resp. $G_A \le \Aut(A)$) is the projection of  $\widetilde{G}$ to $\Aut(S)$ (resp. $\Aut(A)$).
	\end{proposition}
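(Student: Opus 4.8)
The plan is to derive the decomposition from the structure theory of projective klt varieties with $\Q$-trivial canonical class, and to obtain the $\Aut(W)$-equivariance from the fact that the covers involved can be chosen canonically; in fact both the product decomposition and the lifting of $\Aut(W)$ are essentially \cite[Theorem B]{NZ}, so what I would do here is recall how they are produced and add the short arguments putting the statement in the exact form asked for.

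First I would use that for $W$ with property $(\dagger)$ the augmented irregularity $q^{\max}(W)$ is finite; after replacing $W$ by a finite étale cover I may thus assume $q(W)=q^{\max}(W)$, so that $\dim\Alb(W)$ has already attained its maximum among finite étale covers (this is where the inequality $\dim A\ge\dim\Alb(W)$ comes from, since at the end $\dim A=\dim\Alb(W)$). For such a $W$, the Albanese morphism $\alb_W:W\to\Alb(W)$ is an algebraic fibre space by Kawamata-type results, and --- this is the heart of \cite{NZ} --- it is an \emph{isotrivial} fibre bundle whose fibre $S$ is weak Calabi--Yau and whose monodromy is finite. Trivialising the monodromy by the finite étale cover of the abelian variety $\Alb(W)$ corresponding to the kernel of the monodromy representation, and pulling $W$ back over it, produces a finite étale cover $W'\to W$ on which $\alb_{W'}$ is a \emph{trivial} bundle, that is, $W'\cong S\times A$ with $A=\Alb(W')$ abelian and $S$ weak Calabi--Yau. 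The cover of $\Alb(W)$ used here is canonical, hence $\Aut(W)$-invariant, so every $g\in\Aut(W)$ lifts to $W'$; the lifts form a group $\widehat\Gamma\le\Aut(W')$ sitting in an extension $1\to\Gal(W'/W)\to\widehat\Gamma\to\Aut(W)\to 1$, and for $G\le\Aut(W)$ one sets $\widetilde G$ to be the preimage of $G$, which by construction is a lifting of $G$ in the stated sense.

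It remains to check that $\Aut(W')=\Aut(S\times A)=\Aut(S)\times\Aut(A)$, which yields $\widetilde G\le\Aut(S)\times\Aut(A)$ together with the projections $G_S,G_A$. Since $S$ is weak Calabi--Yau, $\Aut^0(S)$ is trivial, so $\Aut(S)$ is discrete; any $\phi\in\Aut(S\times A)$ descends through the intrinsic Albanese morphism $\pr_A$ to an affine automorphism $\bar\phi$ of $A$ and carries each fibre $S\times\{a\}$ isomorphically onto $S\times\{\bar\phi(a)\}$, so $a\mapsto\phi|_{S\times\{a\}}$ is a morphism from the connected variety $A$ to the discrete group $\Aut(S)$, hence constant; thus $\phi=(\psi,\bar\phi)$ for some $\psi\in\Aut(S)$, and the reverse inclusion is clear. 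I expect the main obstacle to be the structure input of the second paragraph --- that the Albanese morphism of the $q^{\max}$-realising cover is an isotrivial bundle with weak Calabi--Yau fibre, for a merely klt $\Q$-trivial $W$ --- which one imports from \cite{NZ}; granted that, the passage to an equivariant product cover is essentially formal, the only point needing care being to arrange a \emph{single} cover $W'\to W$ serving all of $\Aut(W)$ at once, which is exactly why the auxiliary cover of $\Alb(W)$ is taken canonically rather than merely existentially.
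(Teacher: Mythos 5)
Your route is recognizably the same circle of ideas as the paper's (which runs the construction of \cite[Theorem B]{NZ}): Kawamata's splitting theorem for the Albanese map of a canonical $\Q$-trivial variety, a monodromy-killing cover of the abelian base, and the splitting $\Aut(S \times A) = \Aut(S) \times \Aut(A)$, which you prove directly by rigidity (using $q(S)=0$ and $\Aut^0(S)=1$); that last paragraph is correct and matches what the paper imports from \cite[Lemma 4.5]{NZ}. Your observation that once $q(W)=q^{\max}(W)$ the Albanese fiber is weak Calabi--Yau is also essentially right (it follows from the splitting after isogeny plus $q(S)+\dim A = q$ of the split cover $\le q^{\max}(W)$).

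The genuine gap is at your very first step: you ``replace $W$ by a finite \'etale cover with $q=q^{\max}$'' with no argument that $\Aut(W)$ --- equivalently, every subgroup $G\le\Aut(W)$ at once, since the proposition demands one fixed cover $\tau: S\times A \to W$ serving all $G$ --- lifts to that cover. A cover realizing the augmented irregularity is neither unique nor functorial, and automorphisms of $W$ need not lift along an arbitrary \'etale cover; yet this equivariance is precisely the non-trivial content of the proposition beyond the bare decomposition, which is already in \cite{NZ} and Kawamata. You do invoke canonicity, but only for the later cover of $\Alb(W)$ cut out by the kernel of the monodromy representation (that part is fine). The paper's proof is organized exactly to supply the step you skipped: it reaches the $q^{\max}$-situation by iterating canonical or explicitly liftable constructions --- the global index-one cover (which you also omit, and which is needed to pass from klt to canonical singularities before applying Kawamata's theorems), the Albanese closure of \cite[Proposition 4.3]{NZ}, unique up to isomorphism and hence carrying lifted automorphisms, and multiplication-by-$m$ isogenies of the Albanese along which the induced affine automorphisms lift by \cite[Lemma 4.9]{NZ} --- and only after finitely many such steps (the irregularity being bounded by $\dim W$) does the fiber become weak Calabi--Yau. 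To make your argument complete you would either have to show that a $q^{\max}$-realizing cover can be chosen canonically, which in effect reconstructs this tower, or lift $G$ step by step as the paper does.
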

	
	\begin{proof}
		This is proved in \cite[Theorem B]{NZ} when $G$ is cyclic. The general case is the same.
		We go through the construction of the lifting for the reader's convenience, but refer the details to \cite{NZ}.
		By taking a global index one cover of $W$, we may assume that $W$ has only canonical singularities. Let $V \to W$ be the Albanese closure as defined after \cite[Proposition 4.3]{NZ}
		which is \'etale and unique up to an isomorphism.
		The properties in \cite[Proposition 4.3]{NZ} and the remark there guarantee the existence of the lifting to $G_V \le \Aut(V)$
		of $G \le \Aut(W)$. Here and hereafter, an element
		$g \in G$ may have several liftings in $\Aut(V)$; we take them all and put them in $G_V$.
		
		Since $V \to W$ is \'etale, the variety $V$, like $W$, also has
		the property $(\dagger)$ but with only canonical singularities.
		So the Albanese map
		$$\alb_V : V \to \Alb(V) = :A_1$$
		is  a surjective morphism with connected fibers by \cite[Main Theorem]{Ka1}. Moreover,
		Kawamata's splitting theorem (\cite[Theorem 8.3]{Ka2}) implies that the Albanese  map $\alb_V$
		splits after some base change of $\alb_V$ by an isogeny $A_1' \to A_1$, that is, taking a fiber $S_1$ of the Albanese morphism $\alb_V$, we have an isomorphism
		$$V \times_{A_1} A_1' \simeq S_1 \times A_1'$$
		over $A_1'$.
		Let $A_1 \to A_1'$ be an isogeny so that the composition $A_1 \to A_1' \to A_1$ equals
		the multiplication by some integer $m \ge 2$. Denote this map by
		$m_{A_1}$.
		By \cite[Lemma 4.9]{NZ}, $G_V|_{\Alb(V)}$ lifts to
		some $G_{A_1} \le \Aut(A_1)$ via $m_{A_1} : A_1 \to A_1 = \Alb(V)$.
		
		By construction, the base change $m_{A_1}: A_1 \to A_1$ of $\alb_V$ produces the splitting
		$$V \times_{A_1} A_1 = S_1 \times A_1 = :V_1\, ,$$
		with $S_1$ a fiber of $\alb_V$ as above.
		Now $G_V$ lifts to $G_{V_1} \le \Aut(V_1)$ which consists of
		all $(g_1, g_2)$ with $g_1 \in G_V$, $g_2 \in G_{A_1}$ so that
		the descending $g_1|_{\Alb(V)}$ of $g_1$ via $\alb_V$ equals the descending of $g_2$
		via $m_{A_1}$.
		Since $m_{A_1}$ is \'etale, the projection $V_1 \to V$ is \'etale too. Hence, $V_1$, like $V$, also has the property $(\dagger)$ but with only canonical singularities.
		In particular, $q(V_1) \le \dim\, V_1 = \dim\, W$ by \cite[Main Theorem]{Ka1}. Applying the same process to $V_1$ (instead of $W$), then to $V_2$ and so on, we get
		$V_i = S_i \times A_i$ with $A_i$ an abelian variety,
		finite \'etale morphisms $V_i \to V_{i-1}$,
		and liftings $G_{V_i}$ of $G$ on $V_0 := W$ for all integers $i \ge 1$.
		Here, $V_i$, like $V$, has the property $(\dagger)$ but with only canonical singularities.
		So $q(V_i) \le \dim\, V_i = \dim\, W$. Thus,
		by induction on dimension, we may assume that $V_t = S_t \times A_t$
		has maximal irregularity $q(V_t)$, $q(S_t) = 0$
		and $S_t$ is a weak Calabi-Yau variety for some $t$.
		By \cite[Lemma 4.5]{NZ}, $\widetilde{G}:= G_{V_t}$
		has the required (splitting) property.
	\end{proof}
	
	\begin{proof}[Proof of  Proposition~\ref{t:main_3-long}.]
		By Proposition \ref{p:lift_act}, there is a finite \'etale cover $\widetilde{X} \to X$
		such that $\widetilde{X} = S \times A$, where $A$ is an abelian variety and $S$ is a weak Calabi-Yau
		variety  (possibly a point)
		and $G$ lifts to
		$$\widetilde{G} \le \Aut(\widetilde{X}) = \Aut(S) \times \Aut (A)\,.$$
		Since $G \le \Aut(\widetilde{X})$ is a zero entropy subgroup, so is $\widetilde{G} \le \Aut(\widetilde{X})$, see the remark in Definition \ref{def:dyn_deg_sing}.
		Since $G \le \Aut(X)$ has $c_{\rm ess}(G, X) = \dim\, X -1$, the group $\widetilde{G}$ and hence $\langle \widetilde{G}, \Aut^0(\widetilde{X}) \rangle$ also have essential nilpotency classes
		$\dim\, X -1 = \dim\, \widetilde{X} - 1$;
		see Lemma \ref{l:Aut_0-ext}.
		
		\smallskip
		
		Note that $\dim A \ge \dim \Alb(X) = q(X) > 0$; see
		Proposition \ref{p:lift_act}. Hence $\Aut^0(\widetilde{X}) \supseteq A \ne 0$.
		Proposition~\ref{t:main_3-long} follows from Claim~\ref{c:main_3-long}, after replacing $\widetilde{X} \to X$ by its Galois closure.
		
		\begin{claim}\label{c:main_3-long}
			$S$ is a point.
		\end{claim}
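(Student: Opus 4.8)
The plan is to argue by contradiction, so suppose $S$ is not a point; put $s := \dim S \ge 1$ and $a := \dim A$, so that $s + a = \dim \widetilde X = n$, and recall that $a \ge 1$ since $\dim A \ge \dim \Alb(X) = q(X) > 0$. First I would pin down $\Aut^0(\widetilde X)$. As $S$ is weak Calabi--Yau it has canonical singularities, $\kappa(S) = 0$ (because $K_S \sim_{\Q} 0$) and $q(S) = 0$; in particular $S$ is not uniruled, so --- passing to an $\Aut(S)$-equivariant resolution $\widehat S \to S$ and using Lemma~\ref{l:equiv_resol} together with Lemma~\ref{l:non-uni_aut} --- $\Aut^0(S) = \Aut^0(\widehat S)$ is a complex torus, and the natural map $\Aut^0(\widehat S) \to \Alb(\widehat S) = 0$ has finite kernel by \cite[Theorem 5.5]{Fujiki}; being connected, $\Aut^0(S)$ is therefore trivial. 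Combined with the splitting $\Aut(\widetilde X) = \Aut(S) \times \Aut(A)$, this gives $\Aut^0(\widetilde X) = \Aut^0(S) \times \Aut^0(A) = \{1_S\} \times A$. Writing $p_S : \widetilde G \to \Aut(S)$ and $p_A : \widetilde G \to \Aut(A)$ for the projections and $G_S, G_A$ for their images, I would note that the homomorphism $\widetilde G \to \Aut(S) \times \bigl(\Aut(A)/\Aut^0(A)\bigr)$ has kernel exactly $\widetilde G \cap (\{1_S\} \times A) = \widetilde G_0$, hence induces an embedding
$$\widetilde G / \widetilde G_0 \hookrightarrow G_S \times \bigl(G_A/(G_A)_0\bigr).$$

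Next I would bound the nilpotency class of each factor on the right. Projections of a zero entropy subgroup are zero entropy: $\pr_S^*\,\NS_{\R}(S)$ is a $g^*$-stable subspace of $\NS_{\R}(\widetilde X)$ on which $g^*$ restricts to $g_S^*$, so $d_1(g_S) \le d_1(g) = 1$, and likewise for $A$. Thus $G_S \le \Aut(S)$ is a zero entropy subgroup, $S$ is normal projective with $\kappa(S) = 0$, $q(S) = 0$, $\dim S = s \le n-1$, and --- passing to the $\Aut(S)$-equivariant resolution $\widehat S$ as in Definition~\ref{def:ess_sing} --- assumption~(iii) of Proposition~\ref{t:main_3-long} applied to $\widehat S$ gives
$$c_{\rm ess}(G_S, S) = c_{\rm ess}(G_S|_{\widehat S}, \widehat S) \le \dim \widehat S - 1 = s - 1;$$
since $\Aut^0(S) = \{1\}$, this says a suitable finite-index subgroup of $G_S$ has nilpotency class $\le s - 1$. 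For the other factor, $G_A \le \Aut(A)$ is a zero entropy subgroup of an abelian variety, so Proposition~\ref{p:tori_conj1a} (Conjecture~\ref{conj-DLOZ} for tori) gives, after passing to a finite-index subgroup, $\Gamma_{a-1}\bigl(G_A/(G_A)_0\bigr) = 1$, i.e. $c\bigl(G_A/(G_A)_0\bigr) \le a - 1$.

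Finally I would combine these: replacing $\widetilde G$ by a finite-index subgroup for which all the above properties hold simultaneously for $\widetilde X$, for $G_S$ (through $\widehat S$) and for $G_A$, the embedding $\widetilde G/\widetilde G_0 \hookrightarrow G_S \times (G_A/(G_A)_0)$ together with the elementary fact that $\Gamma_k$ of a subgroup of a product $B_1 \times B_2$ lies in $\Gamma_k(B_1) \times \Gamma_k(B_2)$ yields
$$n - 1 = c_{\rm ess}(\widetilde G, \widetilde X) = c\bigl(\widetilde G/\widetilde G_0\bigr) \le \max\bigl(c(G_S),\, c(G_A/(G_A)_0)\bigr) \le \max(s-1,\, a-1) = \max(s,a) - 1 \le n - 2,$$
which is absurd. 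Hence $S$ is a point. The delicate point is not any single inequality but the bookkeeping around $c_{\rm ess}$: one must verify that passing between $\widetilde G$ and its finite-index subgroups, and between the possibly singular models $\widetilde X$, $S$ and a resolution $\widehat S$, does not change the essential nilpotency classes, which is exactly the content of \cite[Proposition 2.6]{DLOZI}, Proposition~\ref{p:act_on_NS}, Lemma~\ref{l:Aut_0-ext} and the remark in Definition~\ref{def:ess_sing}; the rest is a dimension count.
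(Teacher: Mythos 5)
Your proof is correct and follows essentially the same route as the paper: split off the torus factor via Proposition~\ref{p:lift_act}, show $\Aut^0(S)=\{1\}$ so that $\Aut^0(\widetilde X)\cong A$, bound the $A$-factor by Proposition~\ref{p:tori_conj1a} ($\le \dim A-1$) and the $S$-factor by assumption (iii) applied to an equivariant resolution of $S$ ($\le \dim S-1$), and contradict $c_{\rm ess}(\widetilde G,\widetilde X)=n-1$. The only (harmless) differences are cosmetic: you use the embedding of $\widetilde G/\widetilde G_0$ into the product and the inequality $c\le\max$, whereas the paper uses surjectivity of the projections to get equality and then contradicts the conjecture on the $S$-factor, and you rederive $\Aut^0(S)=\{1\}$ by hand instead of citing \cite[Lemma 4.4]{NZ}.
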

		
		\proof[Proof of the claim]
		Replacing $(X, G)$ by
		$(\widetilde{X}, \widetilde{G})$
		we may assume that
		$X = A \times S$ and $\Aut^0(X) \supseteq A \ne 0$.
		Replacing $G$ by $\langle G, \Aut^0(X) \rangle$, we may also assume that
		$G \supseteq \Aut^0(X) \ne \{1\}$.
		Replacing $G$ by a finite-index subgroup,
		we may further assume
		$G/\Aut^0(X)$
		$\to G|_{\NS_\R(X)}$ is an isomorphism with image
		a unipotent group of nilpotency class $n-1$, see Proposition \ref{p:act_on_NS} and Lemma \ref{l:Aut_0-ext}.
		Since $q(S) = 0$ and $S$ is not uniruled, we have $\Aut^0(S) = \{1\}$, by \cite[Lemma 4.4]{NZ}.
		Since $q(S) = 0$,  by \cite[Lemma 4.5]{NZ},
		$$\Aut^0(X) = \Aut^0(S \times A) = \Aut^0(A) \times \Aut^0(S) \cong A\, .$$
		Thus, $\Aut^0(X)$ is isomorphic to the abelian variety $A$.
		
		By Proposition \ref{p:lift_act}, $G \le G_S \times G_A$, where $G_S \le \Aut(S)$ and $G_A \le \Aut(A)$,
		are the projections of $G$ to $\Aut(S)$ and $\Aut(A)$.
		Replacing $G$ by a finite-index subgroup, we may assume that
		$c(G) = c_\vir(G)$,
		$c(G_S) = c_\vir(G_S) = c_{\ess}(G_S, S)$,
		and
		$c(G_A/\Aut^0(A)) = c_\vir(G_A/\Aut^0(A)) = c_{\ess}(G_A,A)$.
		Note that
		$$G/\Aut^0(X) \le G_S \times (G_A/\Aut^0(X))$$
		and the projections $G/\Aut^0(X) \to G_S$  and $G/\Aut^0(X) \to G_A/\Aut^0(X)$ are surjective.
		So
		\begin{equation*} \tag{$*$}
			n-1 = c(G/\Aut^0(X)) = \max\big\{c(G_S), c(G_A/\Aut^0(X))\big\} .
		\end{equation*}
		
		Suppose now the contrary that $S$ is not a point. Then $\dim S \ge 1$
		and $\dim A = n - \dim S \le n - 1$. Moreover, $\dim S \le n-1$, because we have seen that
		$\dim A > 0$.
		By Proposition~\ref{p:tori_conj1a}, we have $c_{{\rm ess}}(G_A, A) \le \dim A -1$. Therefore,
		$$c(G_A/\Aut^0(X)) = c(G_A/\Aut^0(A)) = c_{{\rm ess}}(G_A, A) \le \dim A -1 \le n - 2\, .$$
		Then, by $(*)$, $c_{\ess}(G_S, S) = c(G_S) = n-1 > \dim\, S -1$, contradicting Conjecture \ref{conj-DLOZ}
		applied to (an equivariant resolution of) $S$.
		This proves Claim \ref{c:main_3-long} and also
		Proposition~\ref{t:main_3-long}.
	\end{proof}
	
	\begin{remark}
		Based on the decomposition theorem for log terminal numerically K-trivial compact K\"ahler varieties~\cite[Theorem A]{BGL}, 
		we believe that Proposition~\ref{p:lift_act}, and therefore Proposition~\ref{t:main_3-long}, can be generalized 
		to the compact K\"ahler varieties as well. 
	\end{remark}

\end{document}